\newcommand{\nc}{\newcommand}
\newcommand{\delete}[1]{}
	\nc{\label}[1]{\label{#1}}  
	\nc{\cite}[1]{\cite{#1}}  
	\nc{\ref}[1]{\ref{#1}}  
	\nc{\bibitem}[1]{\bibitem{#1}} 
\newtheorem{theorem}{Theorem}[section]
\newtheorem{prop}[theorem]{Proposition}
\newtheorem{lemma}[theorem]{Lemma}
\newtheorem{coro}[theorem]{Corollary}
\theoremstyle{definition}
\newtheorem{defn}[theorem]{Definition}
\newtheorem{remark}[theorem]{Remark}
\newtheorem{exam}[theorem]{Example}
\newtheorem{prop-def}{Proposition-Definition}[section]
\newcommand\alphlist{a,b,c,d,e,f,g,h,i,j,k,l,m,n,o,p,q,r,s,t,u,v,w,x,y,z}
\newcommand\Alphlist{A,B,C,D,E,F,G,H,I,J,K,L,M,N,O,P,Q,R,S,T,U,V,W,X,Y,Z}
\newcommand\getcmds[3]{\expandafter\newcommand\csname #2#1\endcsname{#3{#1}}}
\alphlist\do{\expandafter\getcmds\expandafter{\x}{frak}{\mathfrak}}
\Alphlist\do{\expandafter\getcmds\expandafter{\x}{frak}{\mathfrak}}
\nc{\bfk}{{\bf k}}
\font\cyr=wncyr10
\newfont{\scyr}{wncyr10 scaled 550}
\nc{\sha}{\mbox{\cyr X}}
\nc{\ssha}{\mbox{\bf \scyr X}}
\nc{\Id}{\mathrm{Id}}
\nc{\lbar}[1]{\overline{#1}}
\nc{\leaf}{\mathrm{leaf}}
	\nc{\fax}{\mathcal{F}(X)} 
	\nc{\pfa}{\mathcal{PF}}
	\nc{\shpr}{\diamond}    
	\nc{\ot}{\otimes}      
	\nc{\dep}{\mathrm{dep}} 
	\nc{\var}{\varepsilon} 
	\nc{\id}{\mathrm{id}}  
	\nc{\set}{\mathbf{Set}} 
	\nc{\vect}{\mathbf{Vect}} 
	\nc{\spep}{\mathbf{p}} 
	\nc{\speq}{\mathbf{q}} 
	\nc{\spei}{\mathbf{1}_{\bfk}} 
	\nc{\Hom}{\mathrm{Hom}}
	\nc{\calc}{\mathcal{C}}
	\nc{\adf}{\mathrm{ADF}}
	\nc{\dk}{\mathcal{K}}
	\nc{\wdk}{\widehat{\mathcal{K}}}
	\nc{\com}{\mathrm{Com}}
	\nc{\comp}{\mathrm{Comp}}
	\nc{\li}[1]{\textcolor{red}{#1}}
	\nc{\lir}[1]{\textcolor{red}{Li:#1}}
	\nc{\peng}[1]{\textcolor{purple}{Peng:#1}}
	\nc{\yi}[1]{\textcolor{cyan}{Yi:#1}}
	\nc{\revise}[1]{\textcolor{red}{#1}}
\begin{document}

\title{Multiple Rota-Baxter algebra and multiple Rota-Baxter modules}

\author{Jun He}
\address{School of Mathematics and Statistics,
	Nanjing University of Information Science \& Technology, Nanjing, Jiangsu 210044, P.\,R. China}
\email{he\_junn@163.com}

\author{Xiao-song Peng}
\address{School of Mathematics and Statistics,
	Jiangsu Normal University, Xuzhou, Jiangsu 221116, P.\,R. China}
\email{pengxiaosong3@163.com}


\author{Yi Zhang}
\address{School of Mathematics and Statistics,
	Nanjing University of Information Science \& Technology, Nanjing, Jiangsu 210044, P.\,R. China}
\email{zhangy2016@nuist.edu.cn}

\begin{abstract}
	In this paper, we develop the theory of multiple Rota-Baxter modules over multiple Rota-Baxter algebras. We introduce left, right, and bimodule structures and construct free $\Omega$-operated modules with mixable tensor  establishing free commutative multiple Rota-Baxter modules. We provide a necessary and sufficient condition for a free module to admit a free multiple Rota-Baxter module structure. Furthermore, we define projective and injective multiple Rota-Baxter modules, showing that their category has enough projective and injective objects to support derived
	$\mathrm{Hom}$ functors. Finally, we introduce the tensor product of multiple Rota-Baxter algebras and define flat multiple Rota-Baxter modules, proving that both free and projective modules satisfy the flatness property.
	
\end{abstract}

\date{\today}
\subjclass[2020]{    
	17B38,   
	05E16,   
	16W99, 
	16S10, 
}
\keywords{multiple Rota-Baxter module; multiple Rota-Baxter algebra; free modules; flat muitiple Rota-Baxter module }

\maketitle

\tableofcontents

\setcounter{section}{0}

\allowdisplaybreaks
\section{Introduction}
\subsection{Rota-Baxter algebras and Rota-Baxter modules}
The notion of a Rota-Baxter operator was originally introduced by Baxter in 1960 \cite{B}. A Rota-Baxter algebra \cite{G2} refers to an associative algebra that is endowed with a Rota-Baxter operator, which satisfies  the integration-by-parts formula in analysis. Specifically, given a fixed parameter $\lambda$ in a commutative ring \( \mathbf{k} \), a Rota-Baxter algebra of weight \( \lambda \) is defined as a pair \( (R, P) \), where \( R \) is an associative algebra over \( \mathbf{k} \), and \( P: R \to R \) is a linear operator that satisfies the following Rota-Baxter identity:
\begin{equation}
	P(x)P(y) = P(xP(y)) + P(P(x)y) + \lambda P(xy), \quad \forall x, y \in R.
\end{equation}

From the 1960s to the 1990s, this structure attracted considerable attention from both analytic and combinatorial perspectives, with notable contributions from Atkinson, Cartier, and Rota \cite{G2}. In the setting of Lie algebras, the special case of a Rota-Baxter operator with weight 0 was independently discovered by physicists in relation to the operator form of the classical Yang-Baxter equation \cite{S}.

At the beginning of this century, the Rota-Baxter algebras, along with the Hopf algebra \cite{AGKO}, Lie algebras \cite{GK17}, dendriform algebra \cite{EG06},  pre-Lie algebras \cite{Agu00, AB, Bai22} and Post-Lie algebra \cite{BGN}, form the fundamental algebraic frameworks underlying the Connes-Kreimer renormalization theory \cite{CK} in quantum field theory. For further research developments, refer to Guo's monograph \cite{G2} and its references.
In recent years, the emergence of Rota-Baxter groups \cite{BG22, GLS21} has sparked renewed interest in the study of Rota-Baxter algebras. In particular, the discovery of new algebraic and geometric structures in mathematics and physics \cite{BGST24, BGLM, GGHZ, GLS21, LS23, STW24} has further advanced the research on Rota-Baxter Lie algebras and their applications in deformation theory and homotopy \cite{Das21, JSZ24, TBGS19}.

Similarly to classical algebraic structures including associative algebras and Lie algebras, the investigation of modules and representations constitutes an essential aspect of Rota-Baxter algebra theory. The concept of a Rota-Baxter module was initially introduced in \cite{GL} to study the representations of Rota-Baxter algebras and their connections with the ring structure of Rota-Baxter operators. To recall its definition, a {\bf Rota-Baxter module} is defined to be an $R$-module $M$ together with a linear operator $p$ on $M$ such that
\begin{equation}
	P(x)p(m)=p(xp(m))+p(P(x)m)+\lambda p(xm),\quad \forall x\in R,m\in M.
\end{equation}
Based on the studies, representation theory and regular-singular decomposition of Rota-Baxter algebraic structures over Laurent series were discussed \cite{LQ}. In \cite{QP}, representations of the polynomial Rota-Baxter algebras were investigated, and a classification of Rota-Baxter modules based on the indecomposable ${\bf k}[x]$-module was provided up to isomorphism. The geometric representations and derived functors of the Rota-Baxter modules from the perspective of the module category were studied in \cite{QGG}. Later, the concept of Rota-Baxter paired modules was introduced in \cite{ZGZ}, providing a natural generalization of Rota-Baxter modules while establishing their fundamental connections with Hopf algebra structures. The modules over free commutative non-unital Rota-Baxter algebra of nonzero weight were studied in \cite{TL}, and their equivalence to the modules over noncommutative algebra generated by two indeterminates with a generation relationship was established.

\subsection{Rota-Baxter algebras (modules) going multiple}

The structure of multiple compatible algebraic operations first appeared in Poisson and Lie brackets, originating from Magri's pioneering work on integrable Hamiltonian systems \cite{M78}.
Recently, a new algebraic structure involving multiple multilinear operators has emerged in the study of algebraic renormalization of regularity structures. In 2019, Bruned-Hairer-Zambotti \cite{BHZ19} used decorated rooted trees to characterize the renormalization process of stochastic partial differential equations. Subsequently, Foissy \cite{Foi21} introduced the concept of matching anticipative algebras to further understand the Hopf algebraic approach developed by Bruned, Hairer, and Zambotti for stochastic differential equations.

In recent years, the concept of matching Rota-Baxter algebras was introduced in \cite{GGZ}, which led to investigating matching Rota-Baxter modules and their relationships with modules over other algebraic structures in \cite{ZL}. Later, a generalization of Rota-Baxter algebras was proposed through the introduction of multiple Rota-Baxter algebras in \cite{ZZ}. Naturally, this development prompts the investigation of module structures over multiple Rota-Baxter algebras. In this paper, we focus on multiple Rota-Baxter modules and their categorical aspects.

{\bf Here is an outline of the paper.} In Section \ref{2}, we initially introduce the concept of left multiple Rota-Baxter modules and subsequently investigate right multiple Rota-Baxter modules and multiple Rota-Baxter bimodules through their module actions. Later, we construct free $\Omega$-operated modules with mixable tensor, and establish the construction of free commutative multiple Rota-Baxter modules over multiple Rota-Baxter algebra. Furthermore, we establish necessary and sufficient conditions under which a free module can be endowed with the structure of a multiple Rota-Baxter module. In Section \ref{3}, we define projective and injective multiple Rota-Baxter modules, and demonstrate that the category of multiple Rota-Baxter modules contains enough projective and injective objects. This enables the definition of derived functors of $\mathrm{Hom}$ through projective and injective resolutions of multiple Rota-Baxter modules. In Section \ref{4}, the tensor product is introduced over multiple Rota-Baxter algebras, providing the foundation for defining flat multiple Rota-Baxter modules. Furthermore, we show that both free multiple Rota-Baxter modules and, more generally, projective multiple Rota-Baxter modules satisfy the flatness property.

\smallskip
{\bf Notation.}
Let $\bf k$ be a commutative unital ring. All algebras (modules) discussed in this paper are assumed to be $\bf k$-algebras ($\bf k$-modules) with unity, and all morphisms are
$\bf k$-linear and preserve the unity element unless stated otherwise.

\section{Free multiple Rota-Baxter modules}\label{2}
In this section, we introduce the concept of multiple Rota-Baxter modules. Then we provide a construction of the free multiple Rota-Baxter modules by $\Omega$-operated modules.

\subsection{Multiple Rota-Baxter modules}
We first recall the notion of a multiple Rota-Baxter algebra and a Rota-Baxter module from \cite{GGZ,QGG,ZL}.
\begin{defn}
	Let $\Omega$ be a nonempty set and let $\lambda_{\Omega}:=(\lambda_{\omega})_{\omega \in \Omega},\lambda_{\omega}\in\bf k $ be an indexed set of scalars indexed by $\Omega$.
	\begin{enumerate}
		\item A {\bf multiple Rota-Baxter algebra} $(R,P_{\Omega})$ of pair weight $(\lambda_{\Omega},\lambda_{\Omega})$ is a pair $(R, P_{\Omega})$, where $R$ is an  algebra, $P_{\Omega}:=(P_{\omega})_{\omega \in \Omega }$ is a set of linear operators $P_\omega:R\longrightarrow R$, which satisfy the multiple Rota-Baxter identity
		\begin{equation}{\label{a_0}}
			\begin{split}
				P_{\alpha}(r_1)P_{\beta}(r_2)&=P_{\alpha}(r_1P_{\beta}(r_2))+P_{\beta}(P_{\alpha}(r_1)r_2)+\lambda_{\beta}P_{\alpha}(r_1r_2)+\lambda_{\alpha}P_{\beta}(r_1r_2)\\
				&=P_{\alpha}(r_1P_{\beta}(r_2)+\lambda_{\beta}(r_1r_2))+P_{\beta}(P_{\alpha}(r_1)r_2+\lambda_{\alpha}(r_1r_2)),
			\end{split}
		\end{equation}
		for all $r_1,r_2\in R,\alpha,\beta \in \Omega$.
		\item A {\bf left $(R,P_{\Omega})$-ideal} $(I,I_{\Omega})$ is an ideal $I$ of $R$ equipped with a set of linear operators $I_{\Omega}:=P_{\Omega}|_I$ such that $P_{\omega}(I)\subseteq I$ for all $\omega\in \Omega$.
	\end{enumerate}
\end{defn}
\begin{remark}
	\begin{enumerate}
		\item
		Any Rota-Baxter algebra of weight \(\lambda\) can be interpreted as a multiple Rota-Baxter algebra of pair weight \((\lambda,0)\) or \((0,\lambda)\) by selecting \(\Omega\) as a single-element set.
		\item Any matching Rota-Baxter of weight $\lambda_\Omega$ is a multiple Rota-Baxter algebra of pair weight $(\lambda_\Omega,0)$.
	\end{enumerate}
\end{remark}

In what follows, unless otherwise specified, we suppose that $\Omega$ is a nonempty set, $\lambda_{\Omega}:=(\lambda_{\omega})_{\omega \in \Omega},\lambda_{\omega}\in \bf k $ is a set of scalars indexed by $\Omega$ and $(R,P_{\Omega})$ is a multiple Rota-Baxter algebra of pair weight $(\lambda_{\Omega},\lambda_{\Omega})$.

\begin{defn}
	With the above notations.
	\begin{enumerate}
		\item A {\bf left $(R, P_{\Omega})$-module} \((M, \mathfrak{m}_{\Omega})\) is a left \(R\)-module \(M\) equipped with a set $\mathfrak{m}_{\Omega}:=(\mathfrak{m}_{\omega})_{\omega \in \Omega}$ of linear operators \(\mathfrak{m}_{\omega}: M \to M\) satisfying the identity
		\begin{equation}{\label{a}}
			P_{\alpha}(x)\mathfrak{m}_{\beta}(m) = \mathfrak{m}_{\alpha}(x\mathfrak{m}_{\beta}(m)) + \mathfrak{m}_{\beta}(P_{\alpha}(x)m) + \lambda_{\beta} \mathfrak{m}_{\alpha}(xm) + \lambda_{\alpha} \mathfrak{m}_{\beta}(xm),
		\end{equation}
		for all \(x \in R\), \(m \in M\), and \(\alpha, \beta \in \Omega\).
		
		\item Let $(M,\mathfrak{m}_{\Omega})$ and $(M^{'},\mathfrak{m}'_{\Omega})$ be left $(R,P_{\Omega})$-modules. A {\bf left $(R,P_{\Omega})$-module homomorphism} $\phi:(M,\mathfrak{m}_{\Omega})\longrightarrow (M',\mathfrak{m}_{\Omega}')$ is a left $R$-module homomorphism $\phi:M\longrightarrow M^{'}$ such that $\phi\circ \mathfrak{m}_{\omega}=\mathfrak{m}_{\omega}^{'}\circ\phi$ for all $\omega \in \Omega$.
		
		\item Let $(M,\mathfrak{m}_{\Omega})$ be a left $(R,P_{\Omega})$-module. A {\bf left $(R,P_{\Omega})$-submodule} of $(M,\mathfrak{m}_{\Omega})$ is a pair $(N,\mathfrak{n}_{\Omega})$ consists of a submodule $N$ of $M$ and a set of linear operators $\mathfrak{n}_{\Omega}:=\mathfrak{m}_{\Omega}|_N$ such that $\mathfrak{m}_{\omega}(N)\subseteq N$ for all $\omega\in\Omega$.
	\end{enumerate}
	
\end{defn}

\begin{lemma}{\label{A}}
	Let $(M,\mathfrak{m}_{\Omega})$ be a left $(R,P_{\Omega})$-module and $(N,\mathfrak{n}_{\Omega})$ a left $(R,P_{\Omega})$ submodule of $(M,\mathfrak{m}_{\Omega})$. For each $\omega\in\Omega$, define
	\begin{equation*}
		\overline{\mathfrak{m}}_{\omega}: M/N\longrightarrow M/N,\quad \overline{\mathfrak{m}}_{\omega}(m+N)= \mathfrak{m}_{\omega}(m)+N,\quad \omega\in\Omega.
	\end{equation*}
	Then $(M/N,\overline{\mathfrak{m}}_{\Omega})$ is a left $(R,P_{\Omega})$-module with $\overline{\mathfrak{m}}_{\Omega}:=(\overline{\mathfrak{m}}_{\omega})_{\omega\in\Omega}$. We refer to $(M/N,\overline{\mathfrak{m}}_{\Omega})$ as the {\bf quotient $(R,P_{\Omega})$-module} of $(M,\mathfrak{m}_{\Omega})$ by $(N,\mathfrak{n}_{\Omega})$.
\end{lemma}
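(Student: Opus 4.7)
The plan is to verify the three standard ingredients for a quotient structure: $M/N$ is already a left $R$-module by classical module theory, each $\overline{\mathfrak{m}}_\omega$ is a well-defined $\bfk$-linear endomorphism of $M/N$, and the collection $\overline{\mathfrak{m}}_\Omega$ satisfies the multiple Rota-Baxter module identity \eqref{a}.

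First I would check well-definedness of $\overline{\mathfrak{m}}_\omega$. Suppose $m+N = m'+N$, so $m-m' \in N$. Since $(N,\mathfrak{n}_\Omega)$ is a left $(R,P_\Omega)$-submodule, the condition $\mathfrak{m}_\omega(N)\subseteq N$ yields $\mathfrak{m}_\omega(m-m')\in N$, and by $\bfk$-linearity of $\mathfrak{m}_\omega$ we get $\mathfrak{m}_\omega(m)-\mathfrak{m}_\omega(m')\in N$, hence $\overline{\mathfrak{m}}_\omega(m+N)=\overline{\mathfrak{m}}_\omega(m'+N)$. The $\bfk$-linearity of $\overline{\mathfrak{m}}_\omega$ then follows directly from that of $\mathfrak{m}_\omega$ together with the definition of the $\bfk$-module structure on $M/N$.

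Next I would verify identity \eqref{a} in $M/N$. Starting from arbitrary $x\in R$, $m+N\in M/N$, and $\alpha,\beta\in\Omega$, I compute
\begin{equation*}
P_\alpha(x)\overline{\mathfrak{m}}_\beta(m+N) = P_\alpha(x)\mathfrak{m}_\beta(m) + N,
\end{equation*}
and apply identity \eqref{a} inside $M$ to the right-hand side to rewrite it as
\begin{equation*}
\bigl(\mathfrak{m}_\alpha(x\mathfrak{m}_\beta(m)) + \mathfrak{m}_\beta(P_\alpha(x)m) + \lambda_\beta\mathfrak{m}_\alpha(xm) + \lambda_\alpha\mathfrak{m}_\beta(xm)\bigr) + N.
\end{equation*}
Using the definitions of $\overline{\mathfrak{m}}_\omega$ and the left $R$-module structure on $M/N$, this equals
\begin{equation*}
\overline{\mathfrak{m}}_\alpha(x\overline{\mathfrak{m}}_\beta(m+N)) + \overline{\mathfrak{m}}_\beta(P_\alpha(x)(m+N)) + \lambda_\beta\overline{\mathfrak{m}}_\alpha(x(m+N)) + \lambda_\alpha\overline{\mathfrak{m}}_\beta(x(m+N)),
\end{equation*}
which is exactly the required identity.

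I do not expect a serious obstacle here: the argument is entirely formal, and the only substantive input is the hypothesis $\mathfrak{m}_\omega(N)\subseteq N$, which is built into the definition of a submodule and is precisely what forces $\overline{\mathfrak{m}}_\omega$ to descend to $M/N$. The mildest point of care is simply bookkeeping the passage between coset representatives and their images, which is handled by the well-definedness step above.
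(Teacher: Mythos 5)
Your proposal is correct and follows essentially the same route as the paper: check that $\overline{\mathfrak{m}}_\omega$ descends to $M/N$ via $\mathfrak{m}_\omega(N)\subseteq N$, then push identity \eqref{a} through the quotient map. Your well-definedness step is in fact spelled out more carefully than in the paper, and your coefficient bookkeeping in the displayed identity is cleaner (the paper's own computation contains a small typo, writing $\mathfrak{m}_\beta(rm)$ where $\mathfrak{m}_\alpha(rm)$ is intended).
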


\begin{proof}
	The definition of $\overline{\mathfrak{m}}_{\Omega}$ is well-defined by $\mathfrak{m}_{\omega}(N)\subseteq N$. Then we need to prove that $\overline{\mathfrak{m}}_{\omega}$ satisfies Eq.~(\ref{a}). For $r\in R,m\in M,\alpha,\beta\in \Omega$,
	\begin{equation*}
		\begin{split}
			P_{\alpha}(r)\overline{\mathfrak{m}}_{\beta}(m+N)=& P_{\alpha}(r)(\mathfrak{m}_{\beta}(m)+N)\\
			=& P_{\alpha}(r)\mathfrak{m}_{\beta}(m)+N\\
			=& \mathfrak{m}_{\alpha}(r\mathfrak{m}_{\beta}(m))+\mathfrak{m}_{\beta}(P_{\alpha}(r)m)
			+\lambda_{\alpha}\mathfrak{m}_{\beta}(rm)+\lambda_{\beta}\mathfrak{m}_{\beta}(rm)+N\\
			=& (\mathfrak{m}_{\alpha}(r\mathfrak{m}_{\beta}(m))+N)+(\mathfrak{m}_{\beta}(P_{\alpha}(r)m)+N)\\
			&+(\lambda_{\alpha}\mathfrak{m}_{\beta}(rm)+N)+(\lambda_{\beta}\mathfrak{m}_{\beta}(rm)+N)\\
			=& \overline{\mathfrak{m}}_{\alpha}(r\overline{\mathfrak{m}}_{\beta}(m+N))+\overline{\mathfrak{m}}_{\beta}(P_{\alpha}(r)(m+N))\\
			&+\lambda_{\alpha}\overline{\mathfrak{m}}_{\beta}(r(m+N))+\lambda_{\beta}\overline{\mathfrak{m}}_{\alpha}(r(m+N)),
		\end{split}
	\end{equation*}
	as required.
\end{proof}

\begin{exam}
	\begin{enumerate}
		\item Consider the left action of $(R,P_{\Omega})$ on itself,  $(R,P_{\Omega})$ is also a left $(R,P_{\Omega})$-module.
		
		\item Left $(R,P_{\Omega})$-ideal $I$ is a special case of left $(R,P_{\Omega})$-module with restriction $P_{\omega}|_I: I\longrightarrow I$. Then $(R/I,\overline{P}_{\Omega})$ is also a left $(R,P_{\Omega})$-module with $\overline{P}_{\omega}:R/I\longrightarrow R/I$ by Lemma ~\ref{A}.
	\end{enumerate}
\end{exam}

\begin{prop}
	Let $(M, \mathfrak{m}_{\Omega})$ be a left $(R, P_{\Omega})$-module. We consider a set of maps $A_i : \Omega \longrightarrow {\bf k}$ with finite support, indexed by $i \in I$, identified with $A_i= \{a_{i,\omega} \in {\bf k} | \omega \in \Omega\}$. Define the linear combinations:
	\begin{equation*}
		P_i := P_{A_i} := \sum_{\omega \in \Omega} a_{i,\omega} P_{\omega}, \quad
		\overline{\mathfrak{m}}_i := \overline{\mathfrak{m}}_{A_i} := \sum_{\omega \in \Omega} a_{i,\omega} \mathfrak{m}_{\omega}.
	\end{equation*}
	
	Then:
	\begin{enumerate}
		\item $(R, P_I)$ is a multiple Rota-Baxter algebra of pair weight $(\lambda_I, \lambda_I)$, where:
		\begin{align*}
			P_I := \{P_i \mid i \in I\},\quad
			\lambda_I := \left\{\lambda_i := \sum_{\omega \in \Omega} a_{i,\omega} \lambda_{\omega} \,\bigg|\, i \in I\right\}.
		\end{align*}
		
		\item $(M, \overline{\mathfrak{m}}_I)$ is a left $(R, P_I)$-module of pair weight $(\lambda_I, \lambda_I)$ with $\overline{\mathfrak{m}}_I := \{\overline{\mathfrak{m}}_i \mid i \in I\}$.
	\end{enumerate}
	
	This shows that within any left $(R, P_{\Omega})$-module, linear combinations of the Rota-Baxter operators $P_{\omega}$ and $\mathfrak{m}_{\omega}$ preserve the Rota-Baxter module structure with appropriately adjusted weights.
\end{prop}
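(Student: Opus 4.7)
The plan is to prove both assertions by a direct bilinearity argument: the multiple Rota-Baxter identity in $(R,P_\Omega)$ (resp.\ the module identity in $(M,\mathfrak{m}_\Omega)$) holds for each pair $(\alpha,\beta)\in\Omega\times\Omega$, so any $\bfk$-bilinear combination of such identities will produce the desired identity for $(P_i,P_j)$ (resp.\ $(P_i,\overline{\mathfrak{m}}_j)$). The finite-support hypothesis on each $A_i$ guarantees that all sums that appear are finite, so convergence is a non-issue and we may freely interchange summation with the $\bfk$-linear operators $P_\omega$ and $\mathfrak{m}_\omega$.

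For part (1), I would fix $i,j\in I$ and $r_1,r_2\in R$, expand
\[
P_i(r_1)P_j(r_2)=\sum_{\alpha,\beta\in\Omega}a_{i,\alpha}a_{j,\beta}\,P_\alpha(r_1)P_\beta(r_2),
\]
and substitute the identity \eqref{a_0} for each summand. The four resulting double sums are each a product of two independent single sums: the first two collapse by linearity of $P_\alpha$ and $P_\beta$ to $P_i(r_1P_j(r_2))$ and $P_j(P_i(r_1)r_2)$; the remaining two factor as $(\sum_\beta a_{j,\beta}\lambda_\beta)\sum_\alpha a_{i,\alpha}P_\alpha(r_1r_2)=\lambda_j P_i(r_1r_2)$ and symmetrically $\lambda_i P_j(r_1r_2)$. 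This is exactly the multiple Rota-Baxter identity for $(R,P_I)$ with weights $(\lambda_I,\lambda_I)$.

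For part (2), the argument is formally identical after replacing the second operator by $\overline{\mathfrak{m}}_j=\sum_\beta a_{j,\beta}\mathfrak{m}_\beta$ and using \eqref{a} pointwise:
\[
P_i(x)\overline{\mathfrak{m}}_j(m)=\sum_{\alpha,\beta}a_{i,\alpha}a_{j,\beta}\,P_\alpha(x)\mathfrak{m}_\beta(m),
\]
then substituting Eq.~\eqref{a} for each $(\alpha,\beta)$ and regrouping exactly as above gives
\[
P_i(x)\overline{\mathfrak{m}}_j(m)=\overline{\mathfrak{m}}_i(x\overline{\mathfrak{m}}_j(m))+\overline{\mathfrak{m}}_j(P_i(x)m)+\lambda_j\overline{\mathfrak{m}}_i(xm)+\lambda_i\overline{\mathfrak{m}}_j(xm).
\]
The underlying left $R$-module structure on $M$ is unchanged, so there is nothing further to check.

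There is no real obstacle; the only points requiring minor care are (i) making sure the weight indexed by $j$ multiplies the operator indexed by $i$ (and vice versa), matching the asymmetry already present in \eqref{a_0}, and (ii) invoking the finite-support assumption when swapping finite sums with $P_\omega,\mathfrak{m}_\omega$ and with multiplication in $R$. Both are routine, so I would present the computation once, in full, for part (1) and simply note that the argument for part (2) is verbatim the same with $\mathfrak{m}$ in place of the second $P$.
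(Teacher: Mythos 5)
Your proposal is correct and follows essentially the same route as the paper: the paper proves part (2) by exactly the double-sum expansion and regrouping you describe, with the same pairing of $\lambda_j$ with $\overline{\mathfrak{m}}_i$ and $\lambda_i$ with $\overline{\mathfrak{m}}_j$. The only cosmetic difference is that for part (1) the paper simply cites the result from the reference on multiple Rota-Baxter algebras rather than carrying out the (identical) computation you outline.
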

\begin{proof}
	(a). By \cite{ZZ}, $(R,P_I)$ is a multiple Rota-Baxter algebra of pair weight $(\lambda_I,\lambda_I)$.

	(b). We need to verify $\overline{\mathfrak{m}}_{\omega}$ satisfying Eq.~(\ref{a}). For $x \in R$, $m\in M$, $\alpha,\beta\in\Omega$ and $i,j \in I$, we have
	\begin{align*}
		P_i(x)\overline{\mathfrak{m}}_j(m)=&\left(\sum_{\alpha\in\Omega}^{}a_{i,\alpha}P_{\alpha}(x)\right)\left(\sum_{\beta\in\Omega}^{}a_{j,\beta}\mathfrak{m}_{\beta}(m)\right)\\
		=&\sum_{\alpha\in\Omega}^{}\sum_{\beta\in\Omega}^{}a_{i,\alpha}a_{j,\beta}P_{\alpha}(x)\mathfrak{m}_{\beta}(m)\\
		=&\sum_{\alpha\in\Omega}^{}\sum_{\beta\in\Omega}^{}a_{i,\alpha}a_{j,\beta}\left(\mathfrak{m}_{\alpha}(x\mathfrak{m}_{\beta}(m))+\mathfrak{m}_{\beta}(P_{\alpha}(x)m)+\lambda_{\beta}\mathfrak{m}_{\alpha}(xy)+\lambda_{\alpha}\mathfrak{m}_{\beta}(xm)\right)\\
		=&\sum_{\alpha\in\Omega}^{}\sum_{\beta\in\Omega}^{}a_{i,\alpha}a_{j,\beta}\mathfrak{m}_{\alpha}(x\mathfrak{m}_{\beta}(m))+\sum_{\alpha\in\Omega}^{}\sum_{\beta\in\Omega}^{}a_{i,\alpha}a_{j,\beta}\mathfrak{m}_{\beta}(P_{\alpha}(x)m)\\
		&+\sum_{\alpha\in\Omega}^{}\sum_{\beta\in\Omega}^{}a_{i,\alpha}a_{j,\beta}\lambda_{\beta}\mathfrak{m}_{\alpha}(xm)+\sum_{\alpha\in\Omega}^{}\sum_{\beta\in\Omega}^{}a_{i,\alpha}a_{j,\beta}\lambda_{\alpha}\mathfrak{m}_{\beta}(xm)\\
		=&\sum_{\alpha\in\Omega}^{}a_{i,\alpha}\mathfrak{m}_{\alpha}\left(x\sum_{\beta\in\Omega}^{}a_{j,\beta}\mathfrak{m}_{\beta}(m)\right)+\sum_{\beta\in\Omega}^{}a_{j,\beta}\mathfrak{m}_{\beta}\left(\sum_{\alpha\in\Omega}^{}a_{i,\alpha}P_{\alpha}(x)m\right)\\
		&+\sum_{\beta\in\Omega}^{}a_{j,\beta}\lambda_{\beta}\left(\sum_{\alpha\in\Omega}^{}a_{i,\alpha}\mathfrak{m}_{\alpha}(xm)\right)+\sum_{\alpha\in\Omega}^{}a_{i,\alpha}\lambda_{\alpha}\left(\sum_{\beta\in\Omega}^{}a_{j,\beta}\mathfrak{m}_{\beta}(xm)\right)\\
		=&\sum_{\alpha\in\Omega}^{}a_{i,\alpha}\mathfrak{m}_{\alpha}(x\overline{\mathfrak{m}}_j(m))+\sum_{\beta\in\Omega}^{}a_{j,\beta}\mathfrak{m}_{\beta}(P_i(x)m)\\
		&+\sum_{\beta\in\Omega}^{}a_{j,\beta}\lambda_{\beta}\overline{\mathfrak{m}}_i(xm)+\sum_{\alpha\in\Omega}^{}a_{i,\alpha}\lambda_{\alpha}\overline{\mathfrak{m}}_j(xm)\\
		=&\overline{\mathfrak{m}}_i(x\overline{\mathfrak{m}}_j(m))+\overline{\mathfrak{m}}_j(P_i(x)m)+\sum_{\beta\in\Omega}^{}a_{j,\beta}\lambda_{\beta}\overline{\mathfrak{m}}_i(xm)+\sum_{\alpha\in\Omega}^{}a_{i,\alpha}\lambda_{\alpha}\overline{\mathfrak{m}}_j(xm)\\
		=&\overline{\mathfrak{m}}_i(x\overline{\mathfrak{m}}_j(m))+\overline{\mathfrak{m}}_j(P_i(x)m)+\lambda_j\overline{\mathfrak{m}}_i(xm)+\lambda_i\overline{\mathfrak{m}}_j(xm),
	\end{align*}
	as required.
\end{proof}

By considering left $R$-multiplication and the action ${\mathfrak{m}}_{\omega}$ as linear operators on $\mathrm{End}(M)$, then Eq.~(\ref{a})
can be written as
\begin{equation*}
	(P_{\alpha}(x)\circ {\mathfrak{m}}_\beta)(m)=(\mathfrak{m}_{\alpha}\circ x\circ \mathfrak{m}_{\beta})(m)+(\mathfrak{m}_{\beta}\circ P_{\alpha}(x))(m)+\lambda_{\beta}(\mathfrak{m}_{\alpha}\circ x)(m)+\lambda_{\alpha}(\mathfrak{m}_{\beta}\circ x)(m).
\end{equation*}
A left module over an algebra can be regarded as a right module over the opposite algebra. Then considering the right $\mathrm{End}(M)$-action to $M$, we obtain
\begin{equation*}
	(m)(P_{\alpha}(x)\circ \mathfrak{m}_{\beta})=(m)(\mathfrak{m}_{\alpha}\circ x\circ \mathfrak{m}_{\beta})+(m)(\mathfrak{m}_{\beta}\circ P_{\alpha}(x))+\lambda_{\beta}(m)(\mathfrak{m}_{\alpha}\circ x)+\lambda_{\alpha}(m)(\mathfrak{m}_{\beta}\circ x),
\end{equation*}
which can be written as
\begin{equation*}
	(mP_{\alpha}(x))\mathfrak{m}_{\beta}=(((m)\mathfrak{m}_{\alpha})x)\mathfrak{m}_{\beta}+((m)\mathfrak{m}_{\beta})P_{\alpha}(x)+\lambda_{\beta}((m)\mathfrak{m}_{\alpha})x+\lambda_{\alpha}((m)\mathfrak{m}_{\beta})x.
\end{equation*}
Hence we give the definition of right $(R,P_{\Omega})$-module.

\begin{defn}
	A {\bf right $(R,P_{\Omega})$-module} $(M,\mathfrak{m}_{\Omega})$ is a pair $(M,\mathfrak{m}_{\Omega})$ consists of a right $R$-module $M$ and a set $\mathfrak{m}_{\Omega}:=(\mathfrak{m}_{\omega})_{\omega \in \Omega}$ of linear operators $\mathfrak{m}_{\omega}:M\longrightarrow M$ such that
	\begin{equation}{\label{b}}
		\mathfrak{m}_{\beta}(mP_{\alpha}(x))=\mathfrak{m}_{\beta}(\mathfrak{m}_{\alpha}(m)x)+\mathfrak{m}_{\beta}(m)P_{\alpha}(x)+\lambda_{\beta}\mathfrak{m}_{\alpha}(m)x+\lambda_{\alpha}\mathfrak{m}_{\beta}(m)x,
	\end{equation}
	for all $x\in R,m\in M,\alpha,\beta \in \Omega$.
\end{defn}

\begin{defn}
	Let $(R,P_{\Omega})$ and $(R',P_{\Omega}')$ be multiple Rota-Baxter algebras of pair weight $(\lambda_{\Omega},\lambda_{\Omega})$, an $(R,P_{\Omega})$-$(R',P_{\Omega}')$-{\bf bimodule} is a triple $(_{(R,P_{\Omega})}M_{(R',P_{\Omega}')},\mathfrak{m}_{\Omega}^R,\mathfrak{m}_{\Omega}^{R'})$, where
	\begin{enumerate}
		\item $(M,\mathfrak{m}_{\Omega}^R)$ is a left $(R,P_{\Omega})$-module,
		\item $(M,\mathfrak{m}_{\Omega}^{R'})$ is a right $(R',P_{\Omega}')$-module,
		\item $M$ carries an $R$-$R'$-bimodule structure,
	\end{enumerate}
	satisfying the compatibility conditions
	\begin{equation*}
		\mathfrak{m}_{\Omega}^{R'}(rm)=r\mathfrak{m}_{\Omega}^{R'}(m),\quad \mathfrak{m}_{\Omega}^R(mr')=\mathfrak{m}_{\Omega}^R(m)r',\quad \mathfrak{m}_{\Omega}^{R'}(\mathfrak{m}_{\Omega}^R(m))=\mathfrak{m}_{\Omega}^R(\mathfrak{m}_{\Omega}^{R'}(m)),
	\end{equation*}
	for all $m\in M,r\in R,r'\in {R'}$.
\end{defn}

\subsection{Free multiple Rota-Baxter modules}
Recall \cite{G2,G1} that, an \textbf{$\Omega$-operated algebra} is defined as an algebra $R$ equipped with a set $\Omega:=(\omega)_{\omega\in\Omega}$ of linear operators $\omega: R \to R$. In this subsection, we give the concept of $\Omega$-operated modules and free left $(R,P_{\Omega})$ modules and give a construction of free left $(R,P_{\Omega})$-modules.
\begin{defn}
	Let $(R,\Omega)$ be an $\Omega$-operated algebra.
	\begin{enumerate}
		\item A {\bf left $\Omega$-operated module} $(M,(\mathfrak{m}_{\omega})_{\omega\in\Omega})$ is a left $R$-module $M$ with a set of linear operators $\mathfrak{m}_{\omega}:M\longrightarrow M$.
		
		\item Let $(M,(\mathfrak{m}_{\omega})_{\omega\in\Omega})$ and $(N,(\mathfrak{n}_{\omega})_{\omega\in\Omega})$ be left $\Omega$-operated modules. A {\bf left $\Omega$-operated module homomorphism }$\phi:(M,(\mathfrak{m}_{\omega})_{\omega\in\Omega})\longrightarrow (N,(\mathfrak{n}_{\omega})_{\omega\in\Omega})$ is a left $R$-module homomorphism $\phi:M\longrightarrow N$ such that $\phi\circ \mathfrak{m}_{\omega}=\mathfrak{n}_{\omega}\circ \phi$ for all $\omega\in\Omega$.
		
		\item A {\bf free left $\Omega$-operated module on a set $X$} is a left $\Omega$-operated module $(M,(\mathfrak{m}_{\omega})_{\omega\in\Omega})$ together with any set map $j_{X}:X\longrightarrow M$ with the universal property that, for any left $\Omega$-operated module $(N,(\mathfrak{n}_{\omega})_{\omega\in\Omega})$ together with any set map $f:X\longrightarrow N$, there is a unique left $\Omega$-operated module homomorphism $\overline{f}:(M,(\mathfrak{m}_{\omega})_{\omega\in\Omega})\longrightarrow (N,(\mathfrak{n}_{\omega})_{\omega\in\Omega})$ such that $\overline{f}\circ j_X=f$.
	\end{enumerate}
\end{defn}
Apparently, left multiple Rota-Baxter modules are left $\Omega$-operated modules.

Next we construct the free left $\Omega$-operated module on a set $X$.

Suppose $(R,\Omega)$ is an $\Omega$-operated algebra and $X$ is a set. We consider the following sequences
\begin{equation*}
	\begin{split}
		&M_1(X):=R\otimes {\bf k}X,\\
		&M_2(X):=M_1(X)\oplus R\otimes {\bf k}\Omega\otimes M_1(X),\\
		&\quad \vdots\\
		&M_n(X):=M_{n-1}(X)\oplus R\otimes {\bf k}\Omega\otimes M_{n-1}(X),\\
		&\quad \vdots
	\end{split}
\end{equation*}
denote $M_R(X):=\cup^{\infty}_{n=1}M_n(X)$. For pure tensor $r_1\otimes \omega_1\otimes\cdots\otimes r_{n-1}\otimes\omega_{n-1}\otimes r_n\otimes x\in M_R(X)$ with $r_i\in R$, $\omega_i\in\Omega$, $x\in X$, $r\in R$, define the action of $R$ on $M_R(X)$ as
\begin{equation*}
	r(r_1\otimes \omega_1\otimes\cdots\otimes r_{n-1}\otimes\omega_{n-1}\otimes r_n\otimes x):= rr_1\otimes \omega_1\otimes\cdots\otimes r_{n-1}\otimes\omega_{n-1}\otimes r_n\otimes x.
\end{equation*}

\begin{prop}
	Let $(R,\Omega)$ be an $\Omega$-operated algebra and $X$ a set. Then $M_{R}(X)$ carries a left $R$-module structure with above notions.
\end{prop}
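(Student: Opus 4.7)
The plan is to build the left $R$-module structure inductively along the chain $M_1(X) \subseteq M_2(X) \subseteq \cdots \subseteq M_R(X)$ and then pass to the ascending union, since the formula in the proposition is really the leading-factor multiplication seen uniformly at every depth.

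First I would verify by induction on $n$ that each $M_n(X)$ carries a left $R$-module structure determined by left multiplication on the leading tensor factor. The base case $M_1(X) = R \otimes \mathbf{k}X$ is the standard left $R$-module structure on a tensor product, namely $r \cdot (s \otimes x) = rs \otimes x$. For the inductive step, assuming $M_{n-1}(X)$ has been equipped with a left $R$-module structure, the summand $R \otimes \mathbf{k}\Omega \otimes M_{n-1}(X)$ inherits a left $R$-module structure via $r \cdot (s \otimes \omega \otimes u) = rs \otimes \omega \otimes u$, which is again the standard tensor-product action (with the trailing factor $u$ viewed merely as a $\mathbf{k}$-module). Then the direct sum
\begin{equation*}
M_n(X) = M_{n-1}(X) \oplus \bigl(R \otimes \mathbf{k}\Omega \otimes M_{n-1}(X)\bigr)
\end{equation*}
acquires a left $R$-module structure componentwise.

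Second, I would observe that the canonical inclusion $\iota_n : M_{n-1}(X) \hookrightarrow M_n(X)$ into the first summand is, by the very definition of the direct-sum action, a homomorphism of left $R$-modules. Consequently the ascending union $M_R(X) = \bigcup_{n \geq 1} M_n(X)$ inherits a well-defined left $R$-module structure: given $m \in M_R(X)$, pick any $n$ with $m \in M_n(X)$ and define $r \cdot m$ using the structure on $M_n(X)$, with $R$-linearity of the inclusions ensuring independence of the choice of $n$. The associativity, unitality, and distributivity axioms for $M_R(X)$ transfer immediately from those on each $M_n(X)$.

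Finally, I would check that this inherited action agrees with the stated formula on pure tensors. A pure tensor $r_1 \otimes \omega_1 \otimes \cdots \otimes r_{n-1} \otimes \omega_{n-1} \otimes r_n \otimes x$ with $n \geq 2$ lies in the second summand of $M_n(X)$ after iterated unfolding, and applying the tensor-product action there multiplies $r$ into the leading factor $r_1$, yielding precisely $rr_1 \otimes \omega_1 \otimes \cdots \otimes r_n \otimes x$; the $n=1$ case is the base of the induction. I do not anticipate any serious obstacle here, since the content is just the compatibility of tensor-product module structures with direct sums and filtered unions; the only mild care needed is to track the decomposition so that the leading $R$ factor is identified uniformly across all depths.
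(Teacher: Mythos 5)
Your proposal is correct, but it is organized differently from the paper's proof. The paper verifies the left $R$-module axioms directly on pure tensors of $M_R(X)$: it checks distributivity of the action over sums of module elements and of ring elements, associativity $(rr')m = r(r'm)$, and unitality, each by an explicit computation with the leading tensor factor. You instead assemble the structure from standard constructions: $R \otimes_{\mathbf{k}} V$ is a left $R$-module via multiplication in the first factor for any $\mathbf{k}$-module $V$, direct sums of left $R$-modules are left $R$-modules, and an ascending union along $R$-linear inclusions inherits a module structure; you then identify the resulting action with the stated leading-factor formula. Your route buys economy and reuse --- the axioms never need to be rechecked because they hold for each building block --- at the cost of having to track that the inclusions $M_{n-1}(X) \hookrightarrow M_n(X)$ are the first-summand embeddings and are therefore $R$-linear, which you do address. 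The paper's route is more elementary and self-contained, at the cost of essentially re-proving that $R \otimes V$ is an $R$-module. Both arguments establish the same statement; no gap in yours.
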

\begin{proof}
	We need to verify $M_R(X)$ satisfies the axioms of module.
	First, for $r_1\otimes \omega_1\otimes\cdots\otimes r_{s-1}\otimes\omega_{s-1}\otimes r_s\otimes x,\  r_1'\otimes \omega_1'\otimes\cdots\otimes r_{t-1}'\otimes\omega_{t-1}'\otimes r_t'\otimes x\in M_R(X)$ and $r,r'\in R$, we have
	\begin{align*}
		&r(r_1\otimes \omega_1\otimes\cdots\otimes r_{s-1}\otimes\omega_{s-1}\otimes r_s\otimes x+ r_1'\otimes \omega_1'\otimes\cdots\otimes r_{t-1}'\otimes\omega_{t-1}'\otimes r_t'\otimes x)\\
		=&rr_1\otimes \omega_1\otimes\cdots\otimes r_{s-1}\otimes\omega_{s-1}\otimes r_s\otimes x+rr_1'\otimes \omega_1'\otimes\cdots\otimes r_{t-1}'\otimes\omega_{t-1}'\otimes r_t'\otimes x\\
		=&r(r_1\otimes \omega_1\otimes\cdots\otimes r_{s-1}\otimes\omega_{s-1}\otimes r_s\otimes x)+r(r_1'\otimes \omega_1'\otimes\cdots\otimes r_{t-1}'\otimes\omega_{t-1}'\otimes r_t'\otimes x),
	\end{align*}
and
	\begin{align*}
		&(r+r')(r_1\otimes \omega_1\otimes\cdots\otimes r_{s-1}\otimes\omega_{s-1}\otimes r_s\otimes x)\\
		=&(r+r')\otimes r_1\otimes \omega_1\otimes\cdots\otimes r_{s-1}\otimes\omega_{s-1}\otimes r_s\otimes x\\
		=&rr_1\otimes \omega_1\otimes\cdots\otimes r_{s-1}\otimes\omega_{s-1}\otimes r_s\otimes x+r'r_1\otimes \omega_1\otimes\cdots\otimes r_{s-1}\otimes\omega_{s-1}\otimes r_s\otimes x\\
		=&r(r_1\otimes \omega_1\otimes\cdots\otimes r_{s-1}\otimes\omega_{s-1}\otimes r_s\otimes x)+r'(r_1\otimes \omega_1\otimes\cdots\otimes r_{s-1}\otimes\omega_{s-1}\otimes r_s\otimes x).
	\end{align*}
Also
	\begin{align*}
		&(rr')(r_1\otimes \omega_1\otimes\cdots\otimes r_{s-1}\otimes\omega_{s-1}\otimes r_s\otimes x)\\
		=&(rr')r_1\otimes \omega_1\otimes\cdots\otimes r_{s-1}\otimes\omega_{s-1}\otimes r_s\otimes x\\
		=&r(r'r_1\otimes \omega_1\otimes\cdots\otimes r_{s-1}\otimes\omega_{s-1}\otimes r_s\otimes x)\\
		=&r(r'(r_1\otimes \omega_1\otimes\cdots\otimes r_{s-1}\otimes\omega_{s-1}\otimes r_s\otimes x)).
	\end{align*}
Moreover
	\begin{align*}
		{\bf 1}_R(r_1\otimes \omega_1\otimes\cdots\otimes r_{s-1}\otimes\omega_{s-1}\otimes r_s\otimes x)=r_1\otimes \omega_1\otimes\cdots\otimes r_{s-1}\otimes\omega_{s-1}\otimes r_s\otimes x.
	\end{align*}
	Then $M_{R}(X)$ is a left $R$-module.
\end{proof}

Define a set $\mathfrak{m}_{\Omega}':=(\mathfrak{m}_{\omega}')_{\omega\in \Omega}$ of linear operators $\mathfrak{m}_{\omega}':M_R(X)\longrightarrow M_R(X)$ by assigning
\begin{equation*}
	r_1\otimes\omega_1\otimes\cdots\otimes r_{n-1}\otimes\omega_{n-1}\otimes r_n\otimes x \longmapsto {\bf 1}_R\otimes\omega\otimes r_1\otimes\omega_1\otimes\cdots\otimes r_{n-1}\otimes\omega_{n-1}\otimes r_n\otimes x, .
\end{equation*}
for $r_1\otimes\omega_1\otimes\cdots\otimes r_{n-1}\otimes\omega_{n-1}\otimes r_n\otimes x\in M_R(X)$
and extend them by additivity.

\begin{prop}{\label{B_1}}
	Let $(R,\Omega)$ an $\Omega$-operated algebra and $X$ a set. Then, with the above notions,
	\begin{enumerate}
		\item $(M_R(X),\mathfrak{m}_{\Omega}')$ is a left $\Omega$-operated module;
		\item Let $j_X:X\longrightarrow M_R(X)$, $x\longmapsto {\bf 1}_R\otimes x$, be a set map for any $x\in X$. Then $(M_R(X),\mathfrak{m}_{\Omega}')$ is the free $\Omega$-operated module on $X$.
	\end{enumerate}
\end{prop}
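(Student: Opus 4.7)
The plan is to handle part (a) quickly and to focus on the universal property in (b), which is the substantive content. For (a), since $M_R(X)$ is already a left $R$-module by the preceding proposition, it suffices to observe that $\mathfrak{m}_\omega'$ is defined on pure tensors and extended by additivity, so each $\mathfrak{m}_\omega'$ is a well-defined $\mathbf{k}$-linear map $M_R(X)\to M_R(X)$. This gives the left $\Omega$-operated module structure.

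For (b), the strategy is to construct $\overline{f}$ recursively along the filtration $M_1(X)\subseteq M_2(X)\subseteq \cdots$ and then argue uniqueness by induction. First, on $M_1(X)=R\otimes\mathbf{k}X$, define $\overline{f}(r\otimes x):=r\cdot f(x)$, which is well-defined by the universal property of the tensor product and visibly $R$-linear. Second, using the direct sum decomposition $M_n(X)=M_{n-1}(X)\oplus R\otimes\mathbf{k}\Omega\otimes M_{n-1}(X)$, extend $\overline{f}$ to $M_n(X)$ by keeping the previously defined map on $M_{n-1}(X)$ and setting
\[
\overline{f}(r\otimes\omega\otimes m) := r\cdot \mathfrak{n}_\omega(\overline{f}(m)) \qquad (r\in R,\ \omega\in\Omega,\ m\in M_{n-1}(X)).
\]
This is well-defined by the universal property of the tensor product applied level by level, and it assembles to a $\mathbf{k}$-linear map $\overline{f}:M_R(X)\to N$ because $M_R(X)=\bigcup_n M_n(X)$.

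Third, I would verify the three required properties. The $R$-linearity of $\overline{f}$ is immediate from the fact that the $R$-action on $M_R(X)$ operates only on the leftmost tensor factor. The identity $\overline{f}\circ j_X=f$ is immediate from $\overline{f}(\mathbf{1}_R\otimes x)=\mathbf{1}_R\cdot f(x)=f(x)$. The intertwining property $\overline{f}\circ\mathfrak{m}_\omega'=\mathfrak{n}_\omega\circ\overline{f}$ follows directly from the definitions: for any pure tensor $\xi\in M_R(X)$,
\[
\overline{f}(\mathfrak{m}_\omega'(\xi))=\overline{f}(\mathbf{1}_R\otimes\omega\otimes\xi)=\mathbf{1}_R\cdot\mathfrak{n}_\omega(\overline{f}(\xi))=\mathfrak{n}_\omega(\overline{f}(\xi)).
\]

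For uniqueness, suppose $g:(M_R(X),\mathfrak{m}_\Omega')\to(N,(\mathfrak{n}_\omega)_{\omega\in\Omega})$ is another left $\Omega$-operated module homomorphism with $g\circ j_X=f$. I would argue by induction on $n$ that $g$ agrees with $\overline{f}$ on $M_n(X)$. The base case on $M_1(X)$ uses $R$-linearity: $g(r\otimes x)=r\cdot g(\mathbf{1}_R\otimes x)=r\cdot f(x)=\overline{f}(r\otimes x)$. For the inductive step, the key observation is that every pure tensor $r\otimes\omega\otimes m\in R\otimes\mathbf{k}\Omega\otimes M_{n-1}(X)$ equals $r\cdot\mathfrak{m}_\omega'(m)$, so by $R$-linearity and the intertwining property of $g$,
\[
g(r\otimes\omega\otimes m)=r\cdot g(\mathfrak{m}_\omega'(m))=r\cdot\mathfrak{n}_\omega(g(m))=r\cdot\mathfrak{n}_\omega(\overline{f}(m))=\overline{f}(r\otimes\omega\otimes m).
\]
The main point (and the only mild obstacle) is ensuring the recursive definition is unambiguous; this is handled by the direct sum decomposition at each level and is essentially bookkeeping. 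No further subtleties arise, since the entire module $M_R(X)$ is generated from $j_X(X)$ by the $R$-action and the operators $\mathfrak{m}_\omega'$.
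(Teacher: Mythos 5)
Your proposal is correct and follows essentially the same route as the paper: define $\overline{f}$ on $M_1(X)$ by $r\otimes x\mapsto r\cdot f(x)$ and extend recursively along the filtration via $r\otimes\omega\otimes m\mapsto r\cdot\mathfrak{n}_\omega(\overline{f}(m))$, then deduce uniqueness from $\overline{f}\circ j_X=f$ together with $R$-linearity and the intertwining relation. Your treatment is in fact slightly more careful than the paper's, since you explicitly justify well-definedness at each level via the direct sum decomposition and the universal property of the tensor product, and you spell out the uniqueness induction.
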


\begin{proof}
	(a) We need to prove that the {\bf k}-linearity of $\mathfrak{m}_{\omega}'$ when acting on the tensor product,
	\begin{equation*}
		\begin{split}
			&{\bf 1}_R\otimes k\omega\otimes r_1\otimes\omega_1\otimes\cdots\otimes r_{n-1}\otimes\omega_{n-1}\otimes r_n\otimes x\\
			=&k{\bf 1}_R\otimes\omega\otimes r_1\otimes\omega_1\otimes\cdots\otimes r_{n-1}\otimes\omega_{n-1}\otimes r_n\otimes x,
		\end{split}
	\end{equation*}
	for any $k\in {\bf k}$, $r_1\otimes\omega_1\otimes\cdots\otimes r_{n-1}\otimes\omega_{n-1}\otimes r_n\otimes x\in M_R(X)$.
	
	(b) For any left $\Omega$-operated module $(M,\mathfrak{m}_{\Omega})$ with any set map $\varphi:X\longrightarrow M$. We show that there is a unique left $\Omega$-operated module homomorphism $\overline{\varphi}:M_R(X)\longrightarrow M$ such that $\overline{\varphi}\circ j_X=\varphi$.
	
	We define $\overline{\varphi}:M_R(X)\longrightarrow M$ recursively on the pure tensors $r_1\otimes\omega_1\otimes\cdots\otimes r_{n-1}\otimes\omega_{n-1}\otimes r_n\otimes x\in M_R(X)$. If $n = 1$, then define
	\begin{equation*}
		\overline{\varphi}(r\otimes x)=\overline{\varphi}(r({\bf 1}_R\otimes x))=r\overline{\varphi}({\bf 1}_R\otimes x)=r\overline{\varphi}\circ j_X(x)=r\varphi(x) .
	\end{equation*}
	For $n\geqslant 1$, define $\overline{\varphi}$ recursively as
	\begin{align*}
		&\overline{\varphi}(r_1\otimes\omega_1\otimes r_2\otimes\omega_2\otimes\cdots\otimes x)\\
		=&\overline{\varphi}(r_1({\bf 1}_R\otimes \omega_1\otimes r_2\otimes\omega_2\otimes\cdots\otimes x))\\
		=&r_1\overline{\varphi}({\bf 1}_R\otimes \omega_1\otimes r_2\otimes\omega_2\otimes\cdots\otimes x)\\
		=&r_1\overline{\varphi}(\mathfrak{m}_{\omega_1}'(r_2\otimes\omega_2\otimes\cdots\otimes x))\\
		=&r_1\mathfrak{m}_{\omega_1}\overline{\varphi}(r_2\otimes\omega_2\otimes\cdots\otimes x).
	\end{align*}
	By the construction of $\overline{\varphi}$, it is a left $\Omega$-operated $R$-module homomorphism with $\overline{\varphi}\circ j_X=\varphi$. Moreover, $\overline{\varphi}$ is completely determined by the conditions $\overline{\varphi}\circ j_X=\varphi$ and $\mathfrak{m}_{\omega}\circ \overline{\varphi}=\overline{\varphi} \circ \mathfrak{m}_{\omega}'$. Hence it is unique.
\end{proof}

\begin{defn}{\label{2.10}}
	Let $X$ be a set. A {\bf free left $(R,P_{\Omega})$-module} on $X$ is a left $(R,P_{\Omega})$-module $(F(X),\mathfrak{p}_{\Omega})$ equipped with a set map $j_X:X\longrightarrow F(X)$ satisfying the universal property: for any left $(R,P_{\Omega})$-module $(M,\mathfrak{m}_{\Omega})$ and any set map $\varphi:X\longrightarrow M$, there exists a unique left $(R,P_{\Omega})$-module homomorphism $\widetilde{\varphi}:F(X)\longrightarrow M$ such that $\widetilde{\varphi}\circ j_X=\varphi$.
\end{defn}

By the properties of free modules, we obtain
\begin{prop}{\label{BB}}
	\begin{enumerate}
		\item Any left multiple Rota-Baxter module is isomorphic to the quotient module of a free left multiple Rota-Baxter module.
		
		\item Any finitely generated left multiple Rota-Baxter module is isomorphic to the quotient module of the finitely generated free left multiple Rota-Baxter module.
	\end{enumerate}
\end{prop}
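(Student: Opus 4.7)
The plan is to mimic the classical argument presenting a module as a quotient of a free module, using the universal property in Definition~\ref{2.10} together with the quotient construction established in Lemma~\ref{A}.

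For part~(a), given a left $(R,P_{\Omega})$-module $(M,\mathfrak{m}_{\Omega})$, I would take $X$ to be the underlying set of $M$ and consider the free left $(R,P_{\Omega})$-module $(F(X),\mathfrak{p}_{\Omega})$ with its canonical set map $j_{X}:X\longrightarrow F(X)$. Taking $\varphi=\mathrm{id}_{M}:X\longrightarrow M$ as a set map, the universal property of Definition~\ref{2.10} yields a unique left $(R,P_{\Omega})$-module homomorphism $\widetilde{\varphi}:F(X)\longrightarrow M$ with $\widetilde{\varphi}\circ j_{X}=\mathrm{id}_{M}$. Surjectivity of $\widetilde{\varphi}$ is immediate, since every $m\in M$ equals $\varphi(m)=\widetilde{\varphi}(j_{X}(m))$.

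Next I would verify that $N:=\ker(\widetilde{\varphi})$ is a left $(R,P_{\Omega})$-submodule of $F(X)$. It is automatically a left $R$-submodule; closure under each $\mathfrak{p}_{\omega}$ follows from the intertwining relation $\widetilde{\varphi}\circ\mathfrak{p}_{\omega}=\mathfrak{m}_{\omega}\circ\widetilde{\varphi}$, because if $\widetilde{\varphi}(u)=0$ then $\widetilde{\varphi}(\mathfrak{p}_{\omega}(u))=\mathfrak{m}_{\omega}(\widetilde{\varphi}(u))=0$. Applying Lemma~\ref{A} equips $F(X)/N$ with a left $(R,P_{\Omega})$-module structure, and the induced map $F(X)/N\longrightarrow M$ is a bijective left $(R,P_{\Omega})$-module homomorphism, hence an isomorphism. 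For part~(b), the same argument applies verbatim with $X$ chosen to be a finite generating set of $M$; one only notes that $F(X)$ is finitely generated as a left $(R,P_{\Omega})$-module because $j_{X}(X)$ generates $F(X)$, a fact itself derivable from the universal property by comparing with the submodule of $F(X)$ generated by $j_{X}(X)$.

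There is no serious obstacle here: the argument is routine once one has both the universal object (Definition~\ref{2.10}) and the quotient construction (Lemma~\ref{A}) available. The only step requiring a moment's thought is verifying that $\ker(\widetilde{\varphi})$ is closed under the operators $\mathfrak{p}_{\omega}$, which reduces to the intertwining property of a left $(R,P_{\Omega})$-module homomorphism. The rest is a direct transcription of the classical first-isomorphism argument into the multiple Rota-Baxter setting.
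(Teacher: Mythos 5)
Your argument is correct and is exactly the standard one the paper is invoking when it says "by the properties of free modules" (the paper omits the proof entirely): build the surjection from the universal property of Definition~\ref{2.10}, observe that the kernel is stable under the operators $\mathfrak{p}_{\omega}$ by the intertwining relation, and conclude via Lemma~\ref{A}. The finitely generated case is handled correctly as well, including the justification that $j_{X}(X)$ generates $F(X)$.
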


Next we construct the free left $(R,P_{\Omega})$-module on a set $X$. Let $I_X$ be the left $\Omega$-operated submodule of $M_R(X)$ generated by the set
\begin{equation*}
	\left\{P_{\alpha}(r)\mathfrak{m}_{\beta}'(a)-\mathfrak{m}_{\alpha}'\left(r\mathfrak{m}_{\beta}'(a)\right)-\mathfrak{m}_{\beta}'(P_{\alpha}(r)a)-\lambda_{\beta}\mathfrak{m}_{\alpha}'(ra)-\lambda_{\alpha}\mathfrak{m}_{\beta}'(ra)\ |\ r\in R,a\in M_{R}(X),\alpha,\beta\in\Omega \right\}.
\end{equation*}
Take $M_R(X)/I_X$ as the quotient $R$-module of $M_R(X)$ by $I_X$ and for each $\omega\in\Omega$, define
\begin{equation*}
	\overline{\mathfrak{m}}_{\omega}':M_R(X)/I_X\longrightarrow M_R(X)/I_X,\quad a+I_X\longmapsto \mathfrak{m}_{\omega}'(a)+I_X,
\end{equation*}
to be the operator on $M_R(X)/I_X$ induced by $\mathfrak{m}_{\omega}$.
For $r\in R,a\in M_{R}(X),\alpha,\beta\in\Omega$, we have
\begin{equation*}
	\begin{split}
		P_{\alpha}(r)\overline{\mathfrak{m}}_{\beta}'(a+I_X)=&P_{\alpha}(r)(\mathfrak{m}_{\beta}'(a)+I_X)\\
		=&P_{\alpha}(r)\mathfrak{m}_{\beta}'(a)+I_X\\
		=&(\mathfrak{m}_{\alpha}'\left(r\mathfrak{m}_{\beta}'(a)\right)+\mathfrak{m}_{\beta}'(P_{\alpha}(r)a)+\lambda_{\beta}\mathfrak{m}_{\alpha}'(ra)+\lambda_{\alpha}\mathfrak{m}_{\beta}'(ra))+I_X\\
		=&(\mathfrak{m}_{\alpha}'\left(r\mathfrak{m}_{\beta}'(a)\right)+I_X)+(\mathfrak{m}_{\beta}'(P_{\alpha}(r)a)+I_X)
		+(\lambda_{\beta}\mathfrak{m}_{\alpha}'(ra)+I_X)+(\lambda_{\alpha}\mathfrak{m}_{\beta}'(ra)+I_X)\\
		=&\overline{\mathfrak{m}}_{\alpha}'\left(r\mathfrak{m}_{\beta}'(a)+I_X\right)+\overline{\mathfrak{m}}_{\beta}'(P_{\alpha}(r)a+I_X)
		+\lambda_{\beta}\overline{\mathfrak{m}}_{\alpha}'(ra+I_X)+\lambda_{\alpha}\overline{\mathfrak{m}}_{\beta}'(ra+I_X)\\
		=&\overline{\mathfrak{m}}_{\alpha}'\left(r\overline{\mathfrak{m}}_{\beta}'(a+I_X)\right)+\overline{\mathfrak{m}}_{\beta}'(P_{\alpha}(r)(a+I_X))
		+\lambda_{\beta}\overline{\mathfrak{m}}_{\alpha}'(r(a+I_X))+\lambda_{\alpha}\overline{\mathfrak{m}}_{\beta}'(r(a+I_X)).
	\end{split}
\end{equation*}
Thus $(M_R(X)/I_X,\overline{\mathfrak{m}}_{\Omega}')$ is a left $(R,P_{\Omega})$-module.

\begin{theorem}{\label{C}}
	Let $X$ be a set. Then $(M_R(X)/I_X,\overline{\mathfrak{m}}_{\Omega}')$ is the free left $(R,P_{\Omega})$-module with the natural map $j:=\pi\circ j_X:X\longrightarrow M_R(X)\longrightarrow M_R(X)/I_X$.
\end{theorem}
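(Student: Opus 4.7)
The plan is to establish the universal property of $(M_R(X)/I_X,\overline{\mathfrak{m}}'_{\Omega})$ by transferring the universal property of the free $\Omega$-operated module from Proposition~\ref{B_1} through the quotient by $I_X$. Given any left $(R,P_{\Omega})$-module $(M,\mathfrak{m}_{\Omega})$ together with a set map $\varphi:X\longrightarrow M$, I first regard $(M,\mathfrak{m}_{\Omega})$ merely as a left $\Omega$-operated module and apply Proposition~\ref{B_1} to obtain a unique left $\Omega$-operated module homomorphism $\overline{\varphi}:M_R(X)\longrightarrow M$ satisfying $\overline{\varphi}\circ j_X=\varphi$.

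The key step is to show that $\overline{\varphi}$ vanishes on $I_X$, so that it descends to a well-defined map $\widetilde{\varphi}:M_R(X)/I_X\longrightarrow M$. First I would check that $\overline{\varphi}$ kills each generator
\begin{equation*}
P_{\alpha}(r)\mathfrak{m}'_{\beta}(a)-\mathfrak{m}'_{\alpha}(r\mathfrak{m}'_{\beta}(a))-\mathfrak{m}'_{\beta}(P_{\alpha}(r)a)-\lambda_{\beta}\mathfrak{m}'_{\alpha}(ra)-\lambda_{\alpha}\mathfrak{m}'_{\beta}(ra).
\end{equation*}
Since $\overline{\varphi}$ is an $R$-module map intertwining $\mathfrak{m}'_{\omega}$ with $\mathfrak{m}_{\omega}$, applying it to the display above yields exactly
\begin{equation*}
P_{\alpha}(r)\mathfrak{m}_{\beta}(\overline{\varphi}(a))-\mathfrak{m}_{\alpha}(r\mathfrak{m}_{\beta}(\overline{\varphi}(a)))-\mathfrak{m}_{\beta}(P_{\alpha}(r)\overline{\varphi}(a))-\lambda_{\beta}\mathfrak{m}_{\alpha}(r\overline{\varphi}(a))-\lambda_{\alpha}\mathfrak{m}_{\beta}(r\overline{\varphi}(a)),
\end{equation*}
which vanishes by Eq.~(\ref{a}) applied in $(M,\mathfrak{m}_{\Omega})$. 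Then I would extend this to the whole $\Omega$-operated submodule $I_X$: a generic element of $I_X$ is an $R$-linear combination of iterated $\mathfrak{m}'_{\omega}$-images of generators, and since $\overline{\varphi}$ respects both the $R$-action and all the $\mathfrak{m}'_{\omega}$, a straightforward induction on the nesting depth of operators shows that $\overline{\varphi}$ sends every such element to zero.

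Consequently $\overline{\varphi}$ factors uniquely through the quotient as $\widetilde{\varphi}:M_R(X)/I_X\longrightarrow M$; it is an $R$-module homomorphism by construction, and the relation $\widetilde{\varphi}\circ\overline{\mathfrak{m}}'_{\omega}=\mathfrak{m}_{\omega}\circ\widetilde{\varphi}$ is inherited from the corresponding relation for $\overline{\varphi}$ before quotienting, so $\widetilde{\varphi}$ is a homomorphism of left $(R,P_{\Omega})$-modules. The identity $\widetilde{\varphi}\circ j=\widetilde{\varphi}\circ\pi\circ j_X=\overline{\varphi}\circ j_X=\varphi$ is then immediate. For uniqueness, any left $(R,P_{\Omega})$-module homomorphism $\psi:M_R(X)/I_X\longrightarrow M$ with $\psi\circ j=\varphi$ produces $\psi\circ\pi:M_R(X)\longrightarrow M$, which is an $\Omega$-operated module homomorphism extending $\varphi$; by the uniqueness clause of Proposition~\ref{B_1} this must agree with $\overline{\varphi}$, and then the surjectivity of $\pi$ forces $\psi=\widetilde{\varphi}$.

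The main obstacle I expect is the second part of the vanishing argument, namely confirming cleanly that the entire $\Omega$-operated submodule generated by the relations is sent to zero; this uses the intertwining property of $\overline{\varphi}$ together with the induction on operator depth just mentioned. Everything else is a direct and routine consequence of combining the universal property of $M_R(X)$ with the standard factorization of module maps through quotients.
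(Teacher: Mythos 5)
Your proposal is correct and follows essentially the same route as the paper's proof: invoke Proposition~\ref{B_1} to get the unique $\Omega$-operated homomorphism $\overline{\varphi}$, check that it annihilates the generators of $I_X$ via Eq.~(\ref{a}) in the target module, conclude $\overline{\varphi}(I_X)=0$ since $I_X$ is the $\Omega$-operated submodule generated by those elements, and then factor through the quotient. Your added details on the induction over operator depth and on deducing uniqueness from the surjectivity of $\pi$ are sound refinements of steps the paper treats more tersely.
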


\begin{proof}
	For a left $(R,P_{\Omega})$-module $(M,\mathfrak{m}_{\Omega})$ and a set map $\varphi:X\longrightarrow M$. We show that there exists a unique left $(R,P_{\Omega})$-module homomorphism $\widetilde{\varphi}:(M_R(X)/I_X,\overline{\mathfrak{m}}_{\Omega}')\longrightarrow(M,\mathfrak{m}_{\Omega})$ such that $\widetilde{\varphi}\circ j=\varphi$. By Proposition~\ref{B_1}, there exists a unique left $\Omega$-operated module homomorphism $\overline{\varphi}:(M_R(X),\mathfrak{m}_{\Omega}')\longrightarrow(M,\mathfrak{m}_{\Omega})$ such that $\overline{\varphi}\circ j_X=\varphi$. That is represented by the commutative diagram:
	\begin{displaymath}
		\xymatrix{
			X \ar[dr]_-{\varphi} \ar[r]^-{j_X} & (M_R(X),\mathfrak{m}_{\Omega}') \ar[d]^-{\overline{\varphi}} \ar[r]^-{\pi} &(M_R(X)/I_X,\overline{\mathfrak{m}}_{\Omega}') \ar@{-->}[dl]^-{\widetilde{\varphi}} \\
			& (M,\mathfrak{m}_{\Omega}).
		}
	\end{displaymath}
	Next we prove that $\widetilde{\varphi}$ vanishes on $I_X$,
	\begin{equation*}
		\begin{split}
			&\overline{\varphi}\left(P_{\alpha}(r)\mathfrak{m}_{\beta}'(a)-\mathfrak{m}_{\alpha}'\left(r\mathfrak{m}_{\beta}'(a)\right)-\mathfrak{m}_{\beta}'\left(P_{\alpha}(r)a\right)-\lambda_{\beta}\mathfrak{m}_{\alpha}'\left(ra\right)-\lambda_{\alpha}\mathfrak{m}_{\beta}'\left(ra\right)\right)\\
			=&P_{\alpha}\left(r\right)\mathfrak{m}_{\beta}(\overline{\varphi}(a))-\mathfrak{m}_{\alpha}\left(r\mathfrak{m}_{\beta}(\overline{\varphi}(a))\right)-\mathfrak{m}_{\beta}\left(P_{\alpha}r\overline{\varphi}(a)\right)\\
			&-\lambda_{\beta}\mathfrak{m}_{\alpha}\left(r\overline{\varphi}(a)\right)-\lambda_{\alpha}\mathfrak{m}_{\beta}\left(r\overline{\varphi}(a)\right)\\
			=&0.
		\end{split}
	\end{equation*}
	Since $\overline{\varphi}$ vanishes on the generators of $I_X$ and $\overline{\varphi}$ is a left $\Omega$-operated module homomorphism, $\overline{\varphi}(I_X)=0$. Then $\overline{\varphi}$ induces a unique left $\Omega$-operated module homomorphism
	\begin{equation*}
		\widetilde{\varphi}:(M_R(X)/I_X,\overline{\mathfrak{m}}_{\Omega}')\longrightarrow(M,\mathfrak{m}_{\Omega}),
	\end{equation*}
	such that $\widetilde{\varphi}\circ \pi=\overline{\varphi}$.
	
	Since $(M_R(X)/I_X,\overline{\mathfrak{m}}_{\Omega}')$ and $(M,\mathfrak{m}_{\Omega})$ are left $(R,P_{\Omega})$-modules and for each $\omega\in\Omega$, $\widetilde{\varphi}\circ \overline{\mathfrak{m}}_{\omega}'=\mathfrak{m}_{\omega}\circ \widetilde{\varphi}$, $\widetilde{\varphi}$ is the unique left $(R,P_{\Omega})$-module homomorphism such that
	\begin{equation*}
		\widetilde{\varphi}\circ j=\widetilde{\varphi}\circ \pi\circ j_X=\overline{\varphi}\circ j_X=\varphi.
	\end{equation*}	
This completes the proof.
\end{proof}

\subsection{Free modules as free multiple Rota-Baxter modules}
In this subsection, we give strict condition under which any free $R$-module is a free $(R, P_{\Omega})$-module.

Let $X$ be a set. For any left $(R,P_{\Omega})$-module $(M,\mathfrak{m}_{\Omega})$, we define the set of {\bf module constant} of $M$ as
\begin{equation*}
	MC(M):=\left\{m\in M|\mathfrak{m}_{\omega}(rm)=P_{\omega}(r)m ,\quad \text{for all } r\in R,\omega\in\Omega\right\},
\end{equation*}
where $m\in MC(M)$ acts as a constant under the operator action.
\begin{defn}
	Let $X$ be a set. A {\bf restricted free left $(R,P_{\Omega})$-module} on $X$ is a left $(R,P_{\Omega})$-module $(F(X),\mathfrak{p}_{\Omega})$ equipped a set map $j_X:X\longrightarrow F(X)$ satisfying, for any left $(R,P_{\Omega})$-module $(M,\mathfrak{m}_{\Omega})$ and any set map $\varphi:X\longrightarrow M$, if $im(\varphi)\subseteq MC(M)$, there exists a unique left $(R,P_{\Omega})$-module homomorphism $\overline{\varphi}:(F(X),\mathfrak{p}_{\Omega})\longrightarrow (M,\mathfrak{m}_{\Omega})$ such that $\overline{\varphi}\circ j_X=\varphi$.
\end{defn}

Let $\widetilde{F}(X)$ be the free left $R$-module on $X$
\begin{equation*}
	\widetilde{F}(X):=\left\{\sum_{x\in X}^{}r_xx|r_x\in R\right\}.
\end{equation*}
Define linear operators $\widetilde{\mathfrak{p}}_{\Omega}:=(\widetilde{\mathfrak{p}}_{\omega})_{\omega\in\Omega}$ on $\widetilde{F}(X)$ as follows,
\begin{equation*}
	\widetilde{\mathfrak{p}}_{\omega}:\widetilde{F}(X)\longrightarrow\widetilde{F}(X),\quad \sum_{x\in X}^{}r_xx\longmapsto\sum_{x\in X}^{}P_{\omega}(r_x)x.
\end{equation*}

\begin{theorem}\label{2.16}
	Let $X$ be a set. Then
	\begin{enumerate}
		\item $(\widetilde{F}(X),\widetilde{\mathfrak{p}}_{\Omega})$ is a left $(R,P_{\Omega})$-module.
		\item $(\widetilde{F}(X),\widetilde{\mathfrak{p}}_{\Omega})$ equipped with the natural embedding map $j:X\longrightarrow(\widetilde{F}(X),\widetilde{\mathfrak{p}}_{\Omega})$ is the restricted free left $(R,P_{\Omega})$-module on $X$ .
	\end{enumerate}
\end{theorem}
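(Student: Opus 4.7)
\medskip

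\textbf{Proof proposal for Theorem \ref{2.16}.}

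For part (a), the plan is to verify Eq.~(\ref{a}) by direct coefficient-wise computation, pushing the work onto the Rota-Baxter identity in $R$. I would pick an arbitrary element $m = \sum_{x\in X} s_x x \in \widetilde{F}(X)$ and an arbitrary $r \in R$, and compute
$P_\alpha(r)\widetilde{\mathfrak{p}}_\beta(m) = \sum_x P_\alpha(r)P_\beta(s_x)\,x$,
then expand $P_\alpha(r)P_\beta(s_x)$ using Eq.~(\ref{a_0}) in $R$. Each of the four resulting summands is of the form $P_\gamma(\text{something})\,x$, which can be recognised as one of $\widetilde{\mathfrak{p}}_\alpha(r\widetilde{\mathfrak{p}}_\beta(m))$, $\widetilde{\mathfrak{p}}_\beta(P_\alpha(r)m)$, $\lambda_\beta\widetilde{\mathfrak{p}}_\alpha(rm)$ or $\lambda_\alpha\widetilde{\mathfrak{p}}_\beta(rm)$ after applying the definition of $\widetilde{\mathfrak{p}}_\omega$ in reverse. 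Linearity over $\bfk$ lets one re-sum over $x$ to obtain the desired identity. This is a routine but slightly notation-heavy computation.

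For part (b), the plan is the standard free-module construction, with the module-constant condition playing the key role. Given a left $(R,P_\Omega)$-module $(M,\mathfrak{m}_\Omega)$ and a set map $\varphi:X\to M$ with $\mathrm{im}(\varphi)\subseteq MC(M)$, I would first use the universal property of the free $R$-module $\widetilde{F}(X)$ to produce the unique $R$-linear map $\overline{\varphi}:\widetilde{F}(X)\to M$ with $\overline{\varphi}(x)=\varphi(x)$ for all $x\in X$, namely $\overline{\varphi}(\sum_x r_x x) = \sum_x r_x \varphi(x)$. It then remains to show that $\overline{\varphi}$ is a left $(R,P_\Omega)$-module homomorphism, i.e. $\overline{\varphi}\circ\widetilde{\mathfrak{p}}_\omega = \mathfrak{m}_\omega\circ\overline{\varphi}$ for every $\omega\in\Omega$.

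The computation unfolds as
\begin{equation*}
\overline{\varphi}(\widetilde{\mathfrak{p}}_\omega(\textstyle\sum_x r_x x)) = \sum_x P_\omega(r_x)\varphi(x) = \sum_x \mathfrak{m}_\omega(r_x\varphi(x)) = \mathfrak{m}_\omega(\overline{\varphi}(\textstyle\sum_x r_x x)),
\end{equation*}
where the crucial middle equality uses exactly the defining property of $MC(M)$ applied to $\varphi(x)$. This is the step that forces us to pass to the \emph{restricted} universal property rather than the full one: without the module-constant hypothesis we cannot move $P_\omega$ past the coefficient into $\mathfrak{m}_\omega$, so $\overline{\varphi}$ would fail to intertwine the operators. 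Uniqueness of $\overline{\varphi}$ follows at once, since its values on the generating set $X$ and its $R$-linearity already pin it down.

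The main (minor) obstacle is simply bookkeeping in part (a): one must carefully track which of the four terms on the right of Eq.~(\ref{a_0}) becomes which of the four terms on the right of Eq.~(\ref{a}) after repackaging $\sum_x$ through the operators $\widetilde{\mathfrak{p}}_\omega$. Conceptually, both parts reduce to the observation that $\widetilde{F}(X)$ is built so that each generator $x$ lies in $MC(\widetilde{F}(X))$, and the operators $\widetilde{\mathfrak{p}}_\omega$ act on the coefficients exactly as the $P_\omega$ do on $R$.
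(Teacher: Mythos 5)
Your proposal is correct and follows essentially the same route as the paper: part (a) reduces Eq.~(\ref{a}) coefficient-wise to the multiple Rota-Baxter identity (\ref{a_0}) in $R$, and part (b) lifts $\varphi$ via the universal property of the free $R$-module and uses the $MC(M)$ condition precisely to show $\overline{\varphi}$ intertwines $\widetilde{\mathfrak{p}}_\omega$ with $\mathfrak{m}_\omega$. Your closing remark that each generator $x$ lies in $MC(\widetilde{F}(X))$ is a nice conceptual summary that the paper leaves implicit.
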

\begin{proof}
	(a) We need to prove $\widetilde{\mathfrak{p}_{\omega}}$ satisfies Eq.~(\ref{a}). For all $r\in R$, $r_xx\in \widetilde{F}(X)$ and $\alpha,\beta\in\Omega$,
	\begin{align*}
		P_{\alpha}(r)\widetilde{\mathfrak{p}}_{\beta}(r_xx)&=P_{\alpha}(r)(P_{\beta}(r_x)x)\\
		&=P_{\alpha}(r)P_{\beta}(r_x)x\\
		&=P_{\alpha}(rP_{\beta}(r_x))x+P_{\beta}(P_{\alpha}(r)r_x)x+\lambda_{\alpha}P_{\alpha}(rr_x)x+\lambda_{\alpha}P_{\beta}(rr_x)x\\
		&=\widetilde{\mathfrak{p}}_{\alpha}(rP_{\beta}(r_x)x)+\widetilde{\mathfrak{p}}_{\beta}(P_{\alpha}(r)r_xx)+\lambda_{\beta}\widetilde{\mathfrak{p}}_{\alpha}(rr_xx)+\lambda_{\alpha}\widetilde{\mathfrak{p}}_{\beta}(rr_xx)\\
		&=\widetilde{\mathfrak{p}}_{\alpha}(r\widetilde{\mathfrak{p}}_{\beta}(r_xx))+\widetilde{\mathfrak{p}}_{\beta}(P_{\alpha}(r)(r_xx))+\lambda_{\beta}\widetilde{\mathfrak{p}}_{\alpha}(r(r_xx))+\lambda_{\alpha}\widetilde{\mathfrak{p}}_{\beta}(r(r_xx)),
	\end{align*}
	as desired.
	
	(b) For any left $(R,P_{\Omega})$-module $(M,\mathfrak{m}_{\Omega})$, any set map $\varphi:X\longrightarrow M$ and $im(\varphi)\subseteq MC(M)$, we show that there exists a unique left $(R,P_{\Omega})$-module homomorphism $\overline{\varphi}:(\widetilde{F}(X),\widetilde{\mathfrak{p}}_{\Omega})\longrightarrow(M,\mathfrak{m}_{\Omega})$ such that $\overline{\varphi}\circ j_X=\varphi$.
	By the universal property of free $R$-module $\widetilde{F}(X)$, there is a left $R$-module homomorphism
	\begin{equation*}
		\overline{\varphi}:(\widetilde{F}(X),\widetilde{\mathfrak{p}}_{\Omega})\longrightarrow (M,\mathfrak{m}_{\Omega}),\quad \sum_{x\in X}^{}r_xx\longmapsto\sum_{x\in X}^{}r_x\varphi(x).
	\end{equation*}
	Next we prove that $\overline{\varphi}$ is the unique required left $(R,P_{\Omega})$-module homomorphism.
	
	For $r_xx\in \widetilde{F}(X)$, $\omega\in\Omega$, we have
	\begin{align*}
		(\overline{\varphi}\circ \widetilde{\mathfrak{p}}_{\omega})(r_xx)&=\overline{\varphi}(\widetilde{\mathfrak{p}}_{\omega}(r_xx))\\
		&=\overline{\varphi}(P_{\omega}(r_x)x)\quad(\text{ by the definition of $\widetilde{\mathfrak{p}}_{\omega}$})\\
		&=P_{\omega}(r_x)\varphi(x)\quad (\text{by the definition of $\overline{\varphi}$})\\
		&=\mathfrak{m}_{\omega}(r_x\varphi(x))\quad(\text{by $im(\varphi)\subseteq MC(M)$})\\
		&=\mathfrak{m}_{\omega}(\overline{\varphi}(r_xx))\quad (\text{by the definition of $\overline{\varphi}$})\\
		&=(\mathfrak{m}_{\omega}\circ\overline{\varphi})(r_xx).
	\end{align*}
	Thus $\overline{\varphi}$ is the required left $(R,P_{\Omega})$-module homomorphism.
	
	Since
	\begin{equation*}
		\overline{\varphi}(r_xx)=r_x\varphi(x)=r_x((\overline{\varphi}\circ j_X)(x))=r_x\varphi(x),
	\end{equation*}
	$\overline{\varphi}$ is completely determined by $\varphi$. Thus the uniqueness of $\overline{\varphi}$ is proven.
\end{proof}

Take $X=\{x\}$ be a singleton set, define left $(R,P_{\Omega})$-module homomorphisms
\begin{equation*}
	f:(R,P_{\Omega})\longrightarrow (\widetilde{F}(X),\widetilde{\mathfrak{p}}_{\Omega}),\quad r\longmapsto rx,
\end{equation*}
and
\begin{equation*}
	g:(\widetilde{F}(X),\widetilde{\mathfrak{p}}_{\Omega})\longrightarrow (R,P_{\Omega}),\quad rx\longmapsto r,
\end{equation*}
for all $r\in R$. Then it is easy to check that $f\circ g=id_{(\widetilde{F}(X),\widetilde{\mathfrak{p}}_{\Omega})}$ and $g\circ f=id_{(R,P_{\Omega})}$. Hence there exists left $(R,P_{\Omega})$-module isomorphism $(R,P_{\Omega})\cong (\widetilde{F}(X),\widetilde{\mathfrak{p}}_{\Omega})$. By Theorem \ref{2.16}, we have
\begin{coro}
	Let $X$ be a singleton, $(R,P_{\Omega})$ is the restricted free left $(R,P_{\Omega})$-module.
\end{coro}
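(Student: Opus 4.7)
The plan is to deduce the corollary directly from Theorem~\ref{2.16} by exhibiting an isomorphism of left $(R,P_{\Omega})$-modules between $(R,P_{\Omega})$ itself and the module $(\widetilde{F}(\{x\}),\widetilde{\mathfrak{p}}_{\Omega})$, because the restricted-free property is preserved under isomorphism. Since Theorem~\ref{2.16}(b) already identifies $(\widetilde{F}(X),\widetilde{\mathfrak{p}}_{\Omega})$ as the restricted free left $(R,P_{\Omega})$-module on any set $X$, specializing to the singleton $X=\{x\}$ reduces the corollary to verifying that the maps $f$ and $g$ displayed just above the statement are mutually inverse left $(R,P_{\Omega})$-module homomorphisms.

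First I would check that $f\colon R\to\widetilde{F}(\{x\})$, $r\mapsto rx$, and $g\colon\widetilde{F}(\{x\})\to R$, $rx\mapsto r$, are left $R$-module homomorphisms. This is immediate from the very definition of $\widetilde{F}(\{x\})=\{rx\mid r\in R\}$ as the free left $R$-module on one generator. Next I would verify compatibility with the operator families: for every $\omega\in\Omega$ and $r\in R$,
\begin{equation*}
(f\circ P_{\omega})(r)=P_{\omega}(r)x=\widetilde{\mathfrak{p}}_{\omega}(rx)=(\widetilde{\mathfrak{p}}_{\omega}\circ f)(r),
\end{equation*}
and similarly $g\circ\widetilde{\mathfrak{p}}_{\omega}=P_{\omega}\circ g$ by the definition of $\widetilde{\mathfrak{p}}_{\omega}$. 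Thus $f$ and $g$ are genuine left $(R,P_{\Omega})$-module homomorphisms, and the identities $g\circ f=\id_R$ and $f\circ g=\id_{\widetilde{F}(\{x\})}$ are trivial.

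Once the isomorphism $(R,P_{\Omega})\cong(\widetilde{F}(\{x\}),\widetilde{\mathfrak{p}}_{\Omega})$ is established, I would transport the universal property. Given any left $(R,P_{\Omega})$-module $(M,\mathfrak{m}_{\Omega})$ and any set map $\varphi\colon\{x\}\to M$ with $\mathrm{im}(\varphi)\subseteq MC(M)$, Theorem~\ref{2.16}(b) yields a unique left $(R,P_{\Omega})$-module homomorphism $\overline{\varphi}\colon\widetilde{F}(\{x\})\to M$ extending $\varphi$; then $\overline{\varphi}\circ f\colon R\to M$ is the unique extension of $\varphi$ through the natural map $\{x\}\to R$, $x\mapsto \mathbf{1}_R$, because any other such extension composed with $g$ would have to coincide with $\overline{\varphi}$. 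The argument is a straightforward diagram chase, and there is no real obstacle beyond bookkeeping; the main point to keep in mind is merely that the hypothesis $\mathrm{im}(\varphi)\subseteq MC(M)$ is exactly what is needed for $\overline{\varphi}\circ f$ to commute with the $P_{\omega}$-action, which is automatic once one invokes Theorem~\ref{2.16}(b) for $\widetilde{F}(\{x\})$.
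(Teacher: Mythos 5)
Your argument is correct and is essentially the paper's own: the paper likewise exhibits the mutually inverse $(R,P_{\Omega})$-module homomorphisms $f\colon r\mapsto rx$ and $g\colon rx\mapsto r$ to obtain $(R,P_{\Omega})\cong(\widetilde{F}(\{x\}),\widetilde{\mathfrak{p}}_{\Omega})$ and then invokes Theorem~\ref{2.16}. Your additional verification that $f$ and $g$ intertwine $P_{\omega}$ with $\widetilde{\mathfrak{p}}_{\omega}$, and the explicit transport of the restricted universal property along the isomorphism, are exactly the details the paper leaves to the reader.
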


\section{Projective and injective multiple Rota-Baxter module}\label{3}
In this section, we focus on the $\mathrm{Hom}$ functor, the projective and the injective properties in the category of multiple Rota-Baxter modules.

\subsection{The Hom functor}Denote the category of left $(R,P_{\Omega})$-modules by $_{(R,P_{\Omega})}{\bf Mod}$. For any two objects $(M,\mathfrak{m}_{\Omega})$ and $(N,\mathfrak{n}_{\Omega})$ in $_{(R,P_{\Omega})}{\bf Mod}$, we define $\mathrm{Hom}_{(R,P_{\Omega})}(M,N)$ as the collection of all the left $(R,P_{\Omega})$-modules homomorphisms from $(M,\mathfrak{m}_{\Omega})$ to $(N,\mathfrak{n}_{\Omega})$. Thus $\mathrm{Hom}_{(R,P_{\Omega})}(M,N)$ is a subset of $\mathrm{Hom}_R(M,N)$.

\begin{prop}{\label{D}}
	Let $(M,\mathfrak{m}_{\Omega}),(N,\mathfrak{n}_{\Omega})$ be left $(R,P_{\Omega})$-modules. Then $\mathrm{Hom}_{(R,P_{\Omega})}(M,N)$ is an abelian subgroup of $\mathrm{Hom}_R(M,N)$.
\end{prop}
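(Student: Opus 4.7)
The plan is to invoke the subgroup test against the ambient abelian group $\mathrm{Hom}_R(M,N)$. Since $\mathrm{Hom}_R(M,N)$ is already an abelian group under pointwise addition, I only need to check that $\mathrm{Hom}_{(R,P_\Omega)}(M,N)$ is nonempty and closed under subtraction; commutativity is inherited.

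First I would observe that the zero map $0 : M \to N$ is clearly $R$-linear and satisfies $0 \circ \mathfrak{m}_\omega = 0 = \mathfrak{n}_\omega \circ 0$ for every $\omega \in \Omega$, so it lies in $\mathrm{Hom}_{(R,P_\Omega)}(M,N)$ and the set is nonempty. Next, given $\phi, \psi \in \mathrm{Hom}_{(R,P_\Omega)}(M,N)$, the difference $\phi - \psi$ is already known to be an $R$-module homomorphism, so the only thing to verify is the compatibility condition $(\phi-\psi)\circ \mathfrak{m}_\omega = \mathfrak{n}_\omega \circ (\phi-\psi)$ for each $\omega \in \Omega$. This is a one-line calculation using $\bfk$-linearity of $\mathfrak{m}_\omega$ and $\mathfrak{n}_\omega$ together with the hypotheses $\phi \circ \mathfrak{m}_\omega = \mathfrak{n}_\omega \circ \phi$ and $\psi \circ \mathfrak{m}_\omega = \mathfrak{n}_\omega \circ \psi$.

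There is essentially no obstacle here: the proposition is a formal consequence of the fact that the compatibility condition $\phi \circ \mathfrak{m}_\omega = \mathfrak{n}_\omega \circ \phi$ is linear in $\phi$. The only minor point worth noting is that the proof uses the operators $\mathfrak{m}_\omega, \mathfrak{n}_\omega$ merely as additive maps, so the multiple Rota-Baxter identity \eqref{a} plays no role; the statement would hold for any family of operators indexed by $\Omega$. Accordingly I would keep the proof short, stating the subgroup test verification and the single identity that closes it.
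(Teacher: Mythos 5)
Your proposal is correct and follows essentially the same route as the paper: verify closure under subtraction via the linearity of the compatibility condition $\phi\circ\mathfrak{m}_{\omega}=\mathfrak{n}_{\omega}\circ\phi$ in $\phi$ (the paper omits the explicit nonemptiness check, which your inclusion of the zero map supplies). Your remark that the Rota-Baxter identity plays no role is accurate but does not change the substance of the argument.
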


\begin{proof}
	For any $f_1,f_2\in \mathrm{Hom}_{(R,P_{\Omega})}(M,N)$, since $\mathrm{Hom}_{(R,P_{\Omega})}(M,N)$ is a subset of $\mathrm{Hom}_R(M,N)$, $f_1-f_2\in \mathrm{Hom}_R(M,N)$.
	Next we show $f_1-f_2\in \mathrm{Hom}_{(R,P_{\Omega})}(M,N)$.
	
	Since $f_1\circ \mathfrak{m}_{\omega}=\mathfrak{n}_{\omega}\circ f_1$ and $f_2\circ \mathfrak{m}_{\omega}=\mathfrak{n}_{\omega}\circ f_2$, we have
	\begin{equation*}
		(f_1-f_2)\circ \mathfrak{m}_{\omega}=f_1\circ \mathfrak{m}_{\omega}+(-f_2)\circ\mathfrak{m}_{\omega}=\mathfrak{n}_{\omega}\circ f_1+\mathfrak{n}_{\omega}\circ (-f_2)=\mathfrak{n}_{\omega}\circ (f_1-f_2).
	\end{equation*}
	Thus $f_1-f_2\in \mathrm{Hom}_{(R,P_{\Omega})}(M,N)$. Hence $\mathrm{Hom}_{(R,P_{\Omega})}(M,N)$ is an abelian subgroup of $\mathrm{Hom}_R(M,N)$.
\end{proof}

\begin{prop}
	Let $(R',P_{\Omega}')$ and $(R'',P_{\Omega}'')$ be multiple Rota-Baxter algebras of pair weight $(\lambda_{\Omega},\lambda_{\Omega})$.
	\begin{enumerate}
		\item If $(M_{(R,P_{\Omega})},\mathfrak{m}_{\Omega}^R)$ is a right $(R,P_{\Omega})$-module and $(_{(R'',P_{\Omega}'')}N_{(R,P_{\Omega})},\mathfrak{n}_{\Omega}^{R''},\mathfrak{n}_{\Omega}^R)$ is an $(R'',P_{\Omega}'')$-$(R,P_{\Omega})$-bimodule, then $(\mathrm{Hom}_{(R,P_{\Omega})}(M,N),\mathfrak{q}_{\Omega})$ is a left $(R'',P_{\Omega}'')$-module with $\mathfrak{q}_{\Omega}:=(\mathfrak{q}_{\omega})_{\omega\in \Omega}$ defined by
		\begin{equation*}
			\mathfrak{q}_{\omega}(\varphi)(m):=\mathfrak{n}_{\omega}^{R''}(\varphi(m)),\quad \varphi\in \mathrm{Hom}_{(R,P_{\Omega})}(M,N),\quad m\in M.
		\end{equation*}
		\item If $(_{(R,P_{\Omega})}M,\mathfrak{m}_{\Omega}^R)$ is a left $(R,P_{\Omega})$-module and $(_{(R,P_{\Omega})}N_{(R'',P_{\Omega}'')},\mathfrak{n}_{\Omega}^R,\mathfrak{n}_{\Omega}^{R''})$ is an $(R,P_{\Omega})$-$(R'',P_{\Omega}'')$-bimodule, then $(\mathrm{Hom}_{(R,P_{\Omega})}(M,N),\mathfrak{q}_{\Omega})$ is a right $(R'',P_{\Omega}'')$-module with $\mathfrak{q}_{\Omega}:=(\mathfrak{q}_{\omega})_{\omega\in \Omega}$ defined by
		\begin{equation*}
			\mathfrak{q}_{\omega}(\varphi)(m):=\mathfrak{n}_{\omega}^{R''}(\varphi(m)),\quad \varphi\in \mathrm{Hom}_{(R,P_{\Omega})}(M,N),\quad m\in M.
		\end{equation*}
		\item If $(_{(R,P_{\Omega})}M_{(R',P_{\Omega}')},\mathfrak{m}_{\Omega}^R,\mathfrak{m}_{\Omega}^{R'})$ is an $(R,P_{\Omega})$-$(R',P_{\Omega}')$-bimodule and $(_{(R,P_{\Omega})}N,\mathfrak{n}_{\Omega}^R)$ is a left $(R,P_{\Omega})$-module, then $(\mathrm{Hom}_{(R,P_{\Omega})}(M,N),\mathfrak{q}_{\Omega})$ is a left $(R',P_{\Omega}')$-module with $\mathfrak{q}_{\Omega}:=(\mathfrak{q}_{\omega})_{\omega\in \Omega}$ defined by
		\begin{equation*}
			\mathfrak{q}_{\omega}(\varphi)(m):=\varphi(\mathfrak{m}_{\omega}^{R'}(m)),\quad \varphi\in \mathrm{Hom}_{(R,P_{\Omega})}(M,N),\quad m\in M.
		\end{equation*}
		\item If $(_{(R',P_{\Omega}')}M_{(R,P_{\Omega})},\mathfrak{m}_{\Omega}^{R'},\mathfrak{m}_{\Omega}^R)$ is an $(R',P_{\Omega}')$-$(R,P_{\Omega})$-bimodule and $(N_{(R,P_{\Omega})},\mathfrak{n}_{\Omega}^R)$ is a right $(R,P_{\Omega})$-module, then $(\mathrm{Hom}_{(R,P_{\Omega})}(M,N),\mathfrak{q}_{\Omega})$ is a right $(R',P_{\Omega}')$-module with $\mathfrak{q}_{\Omega}:=(\mathfrak{q}_{\omega})_{\omega\in \Omega}$ defined by
		\begin{equation*}
			\mathfrak{q}_{\omega}(\varphi)(m):=\varphi(\mathfrak{m}_{\omega}^{R'}(m)),\quad \varphi\in \mathrm{Hom}_{(R,P_{\Omega})}(M,N),\quad m\in M.
		\end{equation*}
	\end{enumerate}
\end{prop}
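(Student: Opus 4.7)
The plan is to establish each of the four parts in three stages: first confirm that $\mathrm{Hom}_{(R,P_\Omega)}(M,N)$ inherits the appropriate classical one-sided module structure over $R''$ or $R'$; next show that the operator $\mathfrak{q}_\omega$ sends $\mathrm{Hom}_{(R,P_\Omega)}(M,N)$ into itself, so that the definition makes sense; and finally verify the multiple Rota-Baxter module identity~(\ref{a}) or its right analogue~(\ref{b}) pointwise on an arbitrary $m\in M$. Since the four parts are related by the obvious left/right and $N$/$M$ dualities, I would write out part~(a) in full and indicate the symmetric modifications for the remaining three.

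For part~(a), the usual formula $(r''\cdot\varphi)(m):=r''\varphi(m)$ gives $\mathrm{Hom}_R(M,N)$ its standard left $R''$-module structure. To see that this action restricts to $\mathrm{Hom}_{(R,P_\Omega)}(M,N)$ (a subgroup by Proposition~\ref{D}), I would use the bimodule compatibility $\mathfrak{n}_\omega^R(r''n)=r''\mathfrak{n}_\omega^R(n)$ to check that $r''\cdot\varphi$ still commutes with each $\mathfrak{m}_\omega^R$. The claim that $\mathfrak{q}_\omega(\varphi)=\mathfrak{n}_\omega^{R''}\circ\varphi$ lies in $\mathrm{Hom}_{(R,P_\Omega)}(M,N)$ uses the other two bimodule axioms: $\mathfrak{n}_\omega^{R''}(nr)=\mathfrak{n}_\omega^{R''}(n)r$ for right $R$-linearity, and $\mathfrak{n}_\omega^{R''}\circ\mathfrak{n}_{\omega'}^R=\mathfrak{n}_{\omega'}^R\circ\mathfrak{n}_\omega^{R''}$ for commutation with $\mathfrak{m}_{\omega'}^R$.

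For the Rota-Baxter identity, I would evaluate both sides of
\[
P_\alpha''(r'')\cdot\mathfrak{q}_\beta(\varphi)=\mathfrak{q}_\alpha(r''\cdot\mathfrak{q}_\beta(\varphi))+\mathfrak{q}_\beta(P_\alpha''(r'')\cdot\varphi)+\lambda_\beta\mathfrak{q}_\alpha(r''\cdot\varphi)+\lambda_\alpha\mathfrak{q}_\beta(r''\cdot\varphi)
\]
at an arbitrary $m$. The left side becomes $P_\alpha''(r'')\mathfrak{n}_\beta^{R''}(\varphi(m))$; unfolding the definitions of $\mathfrak{q}_\omega$ and of the $R''$-action, the identity reduces term by term to Eq.~(\ref{a}) applied in the left $(R'',P_\Omega'')$-module $N$ to the element $\varphi(m)$. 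Part~(b) is dual: the right $R''$-action $(\varphi\cdot r'')(m):=\varphi(m)r''$ combines with the same $\mathfrak{q}_\omega$, and the verification reduces to Eq.~(\ref{b}) in $N$. Parts~(c) and~(d) are analogous but invoke the Rota-Baxter identity inside $M$ rather than in $N$: after rewriting both sides via $\mathfrak{q}_\omega(\varphi)(m)=\varphi(\mathfrak{m}_\omega^{R'}(m))$ together with $(r'\cdot\varphi)(m)=\varphi(mr')$ (respectively $(\varphi\cdot r')(m)=\varphi(r'm)$), I would apply the right (respectively left) $(R',P_\Omega')$-module identity for $M$ to the element $m$, and the $\mathbf{k}$-linearity of $\varphi$ then distributes the four terms to match. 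No single step is genuinely difficult; the main obstacle is purely bookkeeping---keeping straight which side of the bimodule is invoked, which operator is acting where, and which of the three bimodule compatibility axioms is needed at each step.
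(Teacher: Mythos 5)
Your proposal is correct and follows essentially the same route as the paper: define the classical one-sided action on $\mathrm{Hom}_{(R,P_{\Omega})}(M,N)$, check it and $\mathfrak{q}_{\omega}$ preserve the subgroup of $(R,P_{\Omega})$-morphisms via the bimodule compatibility axioms, and then verify the multiple Rota-Baxter identity pointwise by reducing it to Eq.~(\ref{a}) (resp.\ Eq.~(\ref{b})) in $N$ for parts (a)--(b) and to the corresponding identity in $M$ for parts (c)--(d). The paper likewise writes out (a) and (c) in full and dispatches (b) and (d) by symmetry, so no further changes are needed.
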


\begin{proof}
	(a). For any $r''\in R''$ and $\varphi\in \mathrm{Hom}_{(R,P_{\Omega})}(M,N)$, the left $R''$-module structure on $\Hom_{(R,P_{\Omega})}(M,N)$ is defined by
	\begin{equation*}
		(r''\varphi)(m):=r''\varphi(m),\quad m\in M.
	\end{equation*}
	Moreover, for each $\omega\in\Omega$,
	\begin{equation*}
		\begin{split}
			(r''\varphi)\circ \mathfrak{m}_{\omega}^R(m)=&r''\varphi(\mathfrak{m}_{\omega}^R(m))\\
			=&r''\mathfrak{n}_{\omega}^R(\varphi(m))\quad (\text{by $\varphi$ being a right $(R,P_{\Omega})$-module homomorphism})\\
			=&\mathfrak{n}_{\omega}^R(r''\varphi(m))\quad (\text{by $(_{(R'',P_{\Omega}'')}N_{(R,P_{\Omega})},\mathfrak{n}_{\Omega}^{R''},\mathfrak{n}_{\Omega}^R)$ being an $(R'',P_{\Omega}'')$-$(R,P_{\Omega})$-bimodule})\\
			=&\mathfrak{n}_{\omega}^R\circ (r''\varphi)(m).
		\end{split}
	\end{equation*}
	Hence $\mathrm{Hom}_{(R,P_{\Omega})}(M,N)$ is a left $R''$-submodule of $\mathrm{Hom}_R(M,N)$.
	
	Now we prove for each $\omega\in\Omega$, $\mathfrak{q}_{\omega}(\varphi)\in \mathrm{Hom}_{(R,P_{\Omega})}(M,N)$. For $m\in M$,
	\begin{equation*}
		\begin{split}
			\mathfrak{q}_{\omega}(\varphi)\circ \mathfrak{m}_{\omega}^{R}(m)=&\mathfrak{q}_{\omega}(\varphi)(\mathfrak{m}_{\omega}^{R}(m))\\
			=&\mathfrak{n}_{\omega}^{R''}(\varphi(\mathfrak{m}_{\omega}^{R}(m)))\quad (\text{by the definition of $\mathfrak{q}_{\omega}(\varphi)$})\\
			=&\mathfrak{n}_{\omega}^{R''}(\mathfrak{n}_{\omega}^R(\varphi(m)))\quad (\text{by $\varphi$ being a right $(R,P_{\Omega})$-module homomorphism})\\
			=&\mathfrak{n}_{\omega}^R(\mathfrak{n}_{\omega}^{R''}(\varphi(m)))\quad (\text{by $(_{(R'',P_{\Omega}'')}N_{(R,P_{\Omega})},\mathfrak{n}_{\Omega}^{R''},\mathfrak{n}_{\Omega}^R)$ being an $(R'',P_{\Omega}'')$-$(R,P_{\Omega})$-bimodule})\\
			=&\mathfrak{n}_{\omega}^R(\mathfrak{q}_{\omega}(\varphi)(m))\\
			=&\mathfrak{n}_{\omega}^R\circ \mathfrak{q}_{\omega}(\varphi)(m).
		\end{split}
	\end{equation*}
	Hence $\mathfrak{q}_{\omega}(\varphi)$ is also a right $R$-module homomorphism.
	
	Finally we need to prove $\mathfrak{q}_{\omega}(\varphi)$ satisfies Eq.~(\ref{a}).
	For $\alpha,\beta\in\Omega, m\in M, r''\in R''$ we have
	\begin{align*}
		(P''_{\alpha}(r'')\mathfrak{q}_{\beta}(\varphi))(m)=&P''_{\alpha}(r'')(\mathfrak{q}_{\beta}(\varphi)(m))\\
		=&P''_{\alpha}(r'')\mathfrak{n}_{\beta}^{R''}(\varphi(m))\\
		=&\mathfrak{n}_{\alpha}^{R''}(r''\mathfrak{n}_{\beta}^{R''}(\varphi(m)))+\mathfrak{n}_{\beta}^{R''}(P''_{\alpha}(r'')\varphi(m))+\lambda_{\beta}\mathfrak{n}_{\alpha}^{R''}(r''\varphi(m))+\lambda_{\alpha}\mathfrak{n}_{\beta}^{R''}(r''\varphi(m))\\
		&\ \text{(by $(N,\mathfrak{n}_{\Omega})$ being a left $(R'',P_{\Omega}'')$-module)}\\
		=&\mathfrak{n}_{\alpha}^{R''}(r''(\mathfrak{q}_{\beta}\varphi)(m))+\mathfrak{n}_{\beta}^{R''}((P''_{\alpha}(r'')\varphi)(m))\\
		&+\lambda_{\beta}\mathfrak{n}_{\alpha}^{R''}((r''\varphi)(m))+\lambda_{\alpha}\mathfrak{n}_{\beta}^{R''}((r''\varphi)(m))\\
		=&\mathfrak{q}_{\alpha}(r''\mathfrak{q}_{\beta}(\varphi))(m)+\mathfrak{q}_{\beta}(P''_{\alpha}(r'')\varphi)(m)+\lambda_{\beta}\mathfrak{q}_{\alpha}(r''\varphi)(m)+\lambda_{\alpha}\mathfrak{q}_{\beta}(r''\varphi)(m).
	\end{align*}
	Hence
	\begin{equation*}
		P''_{\alpha}(r'')\mathfrak{q}_{\beta}(\varphi)=\mathfrak{q}_{\alpha}(r''\mathfrak{q}_{\beta}(\varphi))+\mathfrak{q}_{\beta}(P''_{\alpha}(r'')\varphi)+\lambda_{\beta}\mathfrak{q}_{\alpha}(r''\varphi)+\lambda_{\alpha}\mathfrak{q}_{\beta}(r''\varphi),
	\end{equation*}
	as required.
	
	(b). The verification proceeds similarly to Item (a) .
	
	(c). For any $r'\in R'$ and $\varphi\in \mathrm{Hom}_{(R,P_{\Omega})}(M,N)$, the left $R'$-module structure on $\mathrm{Hom}_{(R,P_{\Omega})}(M,N)$ is defined by
	\begin{equation*}
		(r'\varphi)(m)=\varphi(mr'),\quad m\in M.
	\end{equation*}
	Moreover, for each $\omega\in\Omega$,
	\begin{equation*}
		\begin{split}
			(r'\varphi)\circ \mathfrak{m}_{\omega}^R(m)=&r'\varphi(\mathfrak{m}_{\omega}^R(m))\\
			=&\varphi(\mathfrak{m}_{\omega}^R(m)r')\\
			=&\varphi(\mathfrak{m}_{\omega}^R(mr'))\quad(\text{by $(_{(R,P_{\Omega})}M_{(R',P_{\Omega}')},\mathfrak{m}_{\Omega}^R,\mathfrak{m}_{\Omega}^{R'})$ being an $(R,P_{\Omega})$-$(R',P_{\Omega}')$-bimodule})\\
			=&\mathfrak{n}_{\omega}^R(\varphi(mr'))\quad (\text{by $\varphi $ being a left $(R,P_{\Omega})$-module homomorphism})\\
			=&\mathfrak{n}_{\omega}^R\circ(r'\varphi)(m).
		\end{split}
	\end{equation*}
	Hence $\mathrm{Hom}_{(R,P_{\Omega})}(M,N)$ is a left $R'$-submodule of $\mathrm{Hom}_R(M,N)$.
	Now we prove for each $\omega\in\Omega$, $\mathfrak{q}_{\omega}(\varphi)\in \mathrm{Hom}_{(R,P_{\Omega})}(M,N)$. For $m\in M$,
	\begin{equation*}
		\begin{split}
			\mathfrak{q}_{\omega}(\varphi)\circ \mathfrak{m}_{\omega}^R(m)
			=&\mathfrak{q}_{\omega}(\varphi)( \mathfrak{m}_{\omega}^R(m))\\
			=&\varphi(\mathfrak{m}_{\omega}^{R'}(\mathfrak{m}_{\omega}^R(m)))\quad (\text{by the definition of $\mathfrak{q}_{\omega}(\varphi)$ })\\
			=&\varphi(\mathfrak{m}_{\omega}^R(\mathfrak{m}_{\omega}^{R'}(m)))\quad (\text{by $(_{(R,P_{\Omega})}M_{(R',P_{\Omega}')},\mathfrak{m}_{\Omega}^R,\mathfrak{m}_{\Omega}^{R'})$ being an $(R,P_{\Omega})$-$(R',P_{\Omega}')$-bimodule})\\
			=&\mathfrak{n}_{\omega}^R(\varphi(\mathfrak{m}_{\omega}^{R'}(m)))\quad (\text{by $\varphi$ being a left $(R,P_{\Omega})$-module homomorphism })\\
			=&\mathfrak{n}_{\omega}^R(\mathfrak{q}_{\omega}(\varphi)(m))\\
			=&\mathfrak{n}_{\omega}^R\circ \mathfrak{q}_{\omega}(\varphi)(m).
		\end{split}
	\end{equation*}
	Hence $\mathfrak{q}_{\omega}(\varphi)$ is also a left $R$-module homomorphism.
	
	Finally we need to prove $\mathfrak{q}_{\omega}(\varphi)$ satisfies Eq.~(\ref{a}). For $\alpha,\beta\in\Omega, m\in M, r'\in R'$,
	we have
	\begin{align*}
		(P'_{\alpha}(r')\mathfrak{q}_{\beta}(\varphi))(m)=&\mathfrak{q}_{\beta}(\varphi)(mP'_{\alpha}(r'))\quad (\text{by\ the\ definition\ of $R'$-action)}\\
		=&\varphi(\mathfrak{m}_{\beta}^{R'}(mP'_{\alpha}(r')))\quad (\text{by\ the\ definition\ of\ $\mathfrak{q}_{\Omega}(\varphi)$})\\
		=&\varphi(\mathfrak{m}_{\beta}^{R'}(\mathfrak{m}_{\alpha}^{R'}(m)r')+\mathfrak{m}_{\beta}^{R'}(m)P'_{\alpha}(r')+\lambda_{\beta}(\mathfrak{m}_{\alpha}^{R'}(m)r')+\lambda_{\alpha}(\mathfrak{m}_{\beta}^{R'}(m)r')\\
		&(\text{by $(M,\mathfrak{m}_{\Omega})$ being a right $(R',P_{\Omega}')$-module})\\
		=&\varphi(\mathfrak{m}_{\beta}^{R'}(\mathfrak{m}_{\alpha}^{R'}(m)r'))+\varphi(\mathfrak{m}_{\beta}^{R'}(m)P'_{\alpha}(r'))\\
		&+\lambda_{\beta}\varphi(\mathfrak{m}_{\alpha}^{R'}(m)r')+\lambda_{\alpha}\varphi(\mathfrak{m}_{\beta}^{R'}(m)r')\\
		=&\mathfrak{q}_{\beta}(\varphi)(\mathfrak{m}_{\alpha}^{R'}(m)r')+(P'_{\alpha}(r')\varphi)(\mathfrak{m}_{\beta}^{R'}(m))\\
		&+\lambda_{\beta}(r'\varphi)(\mathfrak{m}_{\alpha}^{R'}(m))+\lambda_{\alpha}(r'\varphi)(\mathfrak{m}_{\beta}^{R'}(m))\\
		=&(r'\mathfrak{q}_{\beta}(\varphi))(\mathfrak{m}_{\alpha}^{R'}(m))+(P'_{\alpha}(r')\varphi)(\mathfrak{m}_{\beta}^{R'}(m))\\
		&+\lambda_{\beta}(r'\varphi)(\mathfrak{m}_{\alpha}^{R'}(m))+\lambda_{\alpha}(r'\varphi)(\mathfrak{m}_{\beta}^{R'}(m))\\
		=&\mathfrak{q}_{\alpha}(r'\mathfrak{q}_{\beta}(\varphi))(m)+\mathfrak{q}_{\beta}(P'_{\alpha}(r')\varphi)(m)\\
		&+\lambda_{\beta}\mathfrak{q}_{\alpha}(r'\varphi)(m)+\lambda_{\alpha}\mathfrak{q}_{\beta}(r'\varphi)(m)\\
		=&(\mathfrak{q}_{\alpha}(r'\mathfrak{q}_{\beta}(\varphi))+\mathfrak{q}_{\beta}(P'_{\alpha}(r')\varphi)+\lambda_{\beta}\mathfrak{q}_{\alpha}(r'\varphi)+\lambda_{\alpha}\mathfrak{q}_{\beta}(r'\varphi))(m).
	\end{align*}
	Hence
	\begin{equation*}
		P'_{\alpha}(r')\mathfrak{q}_{\beta}(\varphi)=\mathfrak{q}_{\alpha}(r'\mathfrak{q}_{\beta}(\varphi))+\mathfrak{q}_{\beta}(P'_{\alpha}(r')\varphi)+\lambda_{\beta}\mathfrak{q}_{\alpha}(r'\varphi)+\lambda_{\alpha}\mathfrak{q}_{\beta}(r'\varphi),\quad \forall\ r'\in R',
	\end{equation*}
	as required.
	
	(d). The verification proceeds similarly to Item (c).
\end{proof}

\subsection{Projective and injective multiple Rota-Baxter module}
By \cite{Cw}, in any abelian category with enough projective and injective objects, the derived functors of $\mathrm{Hom}$ can be constructed via both projective and injective resolutions. In this subsection, we show that there are enough projective and injective objects in $_{(R,P_{\Omega})}{\bf Mod}$.

\begin{defn}
	A left $(R,P_{\Omega})$-module $(S,\mathfrak{s}_{\Omega})$ is {\bf projective} if, for any left $(R,P_{\Omega})$-module epimorphism $\theta:(M,\mathfrak{m}_{\Omega})\longrightarrow(N,\mathfrak{n}_{\Omega})$ and any left $(R,P_{\Omega})$-module homomorphism $\varphi:(S,\mathfrak{s}_{\Omega})\longrightarrow(N,\mathfrak{n}_{\Omega})$, there exists a left $(R,P_{\Omega})$-module homomorphism $\overline{\varphi}:(S,\mathfrak{s}_{\Omega})\longrightarrow(M,\mathfrak{m}_{\Omega})$ such that $\varphi=\overline{\varphi}\circ \theta$. This is represented by the commutative diagram:
	\begin{displaymath}
		\xymatrix{
			& (S,\mathfrak{s}_{\Omega}) \ar@{-->}[dl]_-{\overline{\varphi}} \ar[d]^-{\varphi}  \\
			(M,\mathfrak{m}_{\Omega}) \ar[r]_-{\theta} & (N,\mathfrak{n}_{\Omega}) \ar[r]_-{} & 0.	\\
		}
	\end{displaymath}
\end{defn}

\begin{prop}{\label{E}}
	In the category of left $(R,P_{\Omega})$-modules, a free left $(R,P_{\Omega})$-module is projective.
\end{prop}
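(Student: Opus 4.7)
The plan is to mimic the classical argument that free modules are projective, replacing the ordinary universal property with the universal property of the free left $(R, P_\Omega)$-module from Definition~\ref{2.10} (constructed in Theorem~\ref{C}). So let $(F(X), \mathfrak{p}_\Omega)$ be a free left $(R, P_\Omega)$-module with structure map $j_X : X \longrightarrow F(X)$, and suppose we are given an epimorphism $\theta : (M, \mathfrak{m}_\Omega) \longrightarrow (N, \mathfrak{n}_\Omega)$ and a homomorphism $\varphi : (F(X), \mathfrak{p}_\Omega) \longrightarrow (N, \mathfrak{n}_\Omega)$. The goal is to construct $\overline{\varphi} : (F(X), \mathfrak{p}_\Omega) \longrightarrow (M, \mathfrak{m}_\Omega)$ lifting $\varphi$ along $\theta$.

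First, for each $x \in X$, the element $\varphi(j_X(x))$ lies in $N$, and since $\theta$ is surjective, I can (using the axiom of choice) pick some $m_x \in M$ with $\theta(m_x) = \varphi(j_X(x))$. This defines a set map $\psi : X \longrightarrow M$ by $x \longmapsto m_x$, with no compatibility requirements at this stage since $X$ carries no structure.

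Next, I apply the universal property of the free left $(R, P_\Omega)$-module to $\psi$: there exists a unique left $(R, P_\Omega)$-module homomorphism
\[
\overline{\varphi} : (F(X), \mathfrak{p}_\Omega) \longrightarrow (M, \mathfrak{m}_\Omega)
\]
such that $\overline{\varphi} \circ j_X = \psi$. This in particular automatically makes $\overline{\varphi}$ commute with both the $R$-action and with each $\mathfrak{p}_\omega$, $\mathfrak{m}_\omega$, which is precisely the difficulty one would face if one tried to construct $\overline{\varphi}$ directly by hand — so leveraging the universal property packages that work neatly.

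Finally, I verify $\theta \circ \overline{\varphi} = \varphi$. Both sides are left $(R, P_\Omega)$-module homomorphisms from $(F(X), \mathfrak{p}_\Omega)$ to $(N, \mathfrak{n}_\Omega)$, and they agree on the generating set $j_X(X)$, because $(\theta \circ \overline{\varphi}) \circ j_X = \theta \circ \psi = \varphi \circ j_X$ by the choice of $\psi$. Applying the uniqueness clause of the universal property to the set map $\varphi \circ j_X : X \longrightarrow N$, both $\theta \circ \overline{\varphi}$ and $\varphi$ are the unique extension, so they coincide. There is no real obstacle here beyond invoking choice to build $\psi$; the conceptual content is entirely encoded in the universal property established in Theorem~\ref{C}.
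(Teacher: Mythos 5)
Your proof is correct and follows essentially the same route as the paper's: lift the generators through the surjection $\theta$ by a choice of preimages, extend the resulting set map via the universal property of the free left $(R,P_{\Omega})$-module, and conclude $\theta\circ\overline{\varphi}=\varphi$ by the uniqueness clause applied to $\varphi\circ j_X$. No gaps.
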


\begin{proof}
	The verification follows the same approach as in the case of trivial left modules. Let $(F(X),\mathfrak{p}_{\Omega})$ be the free left $(R, P_{\Omega})$-module generated by $X$, equipped with the embedding $j_X : X\longrightarrow F(X)$. Let $\theta:(M,\mathfrak{m}_{\Omega})\longrightarrow(N,\mathfrak{n}_{\Omega})$ be a left $(R, P_{\Omega})$-module epimorphism and let
	\[ \varphi:(F(X),\mathfrak{p}_{\Omega})\longrightarrow (N,\mathfrak{n}_{\Omega}), \]
	be a left $(R,P_{\Omega})$-module homomorphism. By surjectivity of $\theta$, for any $x\in X$, there exists a $m_x\in M$ with $\theta(m_x)=(\varphi\circ j_X)(x)$. Hence we have a set map $\varphi':X\longrightarrow M$ defined by $x\longmapsto m_x$, and $\theta\circ \varphi'=\varphi\circ j_X$. By the universal property of $F(X)$, there exists a left $(R, P_{\Omega})$-module homomorphism $\overline{\varphi}:F(X)\longrightarrow M$, such that $\overline{\varphi}\circ j_X=\varphi'$. Then we have $\theta\circ\overline{\varphi}\circ j_X=\theta\circ \varphi'=\varphi \circ j_X$, by the universal property of $F(X)$ again we get  $\theta\circ\overline{\varphi}=\varphi$.
\end{proof}

By Proposition~\ref{BB} and Proposition~\ref{E}, there are enough projective objects in $\mathrm{_{(R,P_{\Omega})}\bf{Mod}}$.

We now proceed to define injective left $(R,P_{\Omega})$-modules and show that $\mathrm{_{(R,P_{\Omega})}\bf{Mod}}$ has enough injective objects.

\begin{defn}
	A left $(R,P_{\Omega})$-module $(T,\mathfrak{t}_{\Omega})$ is {\bf injective} if, for any left $(R,P_{\Omega})$-module monomorphism $\theta:(M,\mathfrak{m}_{\Omega})\longrightarrow(N,\mathfrak{n}_{\Omega})$ and any left $(R,P_{\Omega})$-module homomorphism $\varphi:(M,\mathfrak{m}_{\Omega}) \longrightarrow (T,\mathfrak{t}_{\Omega})$, there exists a left $(R,P_{\Omega})$-module homomorphism $\overline{\varphi}:(N,\mathfrak{n}_{\Omega})\longrightarrow(T,\mathfrak{t}_{\Omega})$ such that $\varphi=\overline{\varphi}\circ \theta$. This is represented by the commutative diagram:
	\begin{equation*}
		\xymatrix{
			&  (T,\mathfrak{t}_{\Omega}) & \\
			0\ar[r]^-{} &(M,\mathfrak{m}_{\Omega}) \ar[u]^-{\varphi} \ar[r]_-{\theta} & (N,\mathfrak{n}_{\Omega}). \ar@{-->}[ul]_-{\overline{\varphi}}
		}
	\end{equation*}
\end{defn}


We introduce the concept of ring of multiple Rota-Baxter operators, based on the framework developed in \cite{LQ}.

\begin{defn}
	For the multiple Rota-Baxter algebra $(R,P_{\Omega})$, the {\bf ring of multiple Rota-Baxter operators on} $(R,P_{\Omega})$, denoted by $R_{MRB}\left\langle Q_{\Omega}\right\rangle$, is constructed as the quotient algebra
	\begin{equation*}
		R_{MRB}\left\langle Q_{\Omega}\right\rangle ={\bf k}\left\langle R,{\bf k}\left\langle Q_{\Omega}\right\rangle\right\rangle /I_{R,Q_{\Omega}},
	\end{equation*}
	where ${\bf k}\left\langle R,{\bf k}\left\langle Q_{\Omega}\right\rangle\right\rangle$ is the free product of $(R,P_{\Omega})$ and ${\bf k}\left\langle Q_{\Omega}\right\rangle$ with $Q_{\Omega}:=(Q_{\omega})_{\omega\in\Omega}$ a set of formal variables,
	$I_{R,Q_{\Omega}}$ is the ideal generated by the set
	\begin{equation*}
		\left\lbrace Q_{\alpha}rQ_{\beta}-P_{\alpha}(r)Q_{\beta}+Q_{\beta}P_{\alpha}(r)+\lambda_{\beta}Q_{\alpha}r+\lambda_{\alpha}Q_{\beta}r\ |\ r\in R,\alpha,\beta\in\Omega\right\rbrace .
	\end{equation*}
	The multiplicative identity of this ring is denoted by ${\bf 1}_{R_{MRB}\left\langle Q_{\Omega}\right\rangle }$.
\end{defn}

\begin{prop}{\label{F}}
	The category of left $(R,P_{\Omega})$-modules is isomorphism to the category of $R_{MRB}\left\langle Q_{\Omega}\right\rangle$-modules under the following correspondence.
	\begin{enumerate}
		\item If $(S,\mathfrak{s}_{\Omega})$ is a left $(R,P_{\Omega})$-module, then $R$-module $S$ is also $R_{MRB}\left\langle Q_{\Omega}\right\rangle$-module with the action $Q_{\omega}\cdot s:=\mathfrak{s}_{\omega}(s),s\in S$.
		\item If $S$ is a left $R_{MRB}\left\langle Q_{\Omega}\right\rangle$-module, then $(S,\mathfrak{s}_{\Omega})$ is also a left $(R,P_{\Omega})$-module with a set $\mathfrak{s}_{\Omega}:=(\mathfrak{s}_{\omega})_{\omega\in\Omega}$ of linear operators
		\begin{equation*}
			\mathfrak{s}_{\omega}:S\longrightarrow S,\quad \mathfrak{s}_{\omega}(s):=Q_{\omega}\cdot s,\quad s\in S.
		\end{equation*}
	\end{enumerate}
	
	\begin{proof}
		(a). Suppose that $(S,\mathfrak{s}_{\Omega})$ is a left $(R,P_{\Omega})$-module, the $R$-module structure on $S$ defines an algebra homomorphism $\varphi:R\longrightarrow \mathrm{End}_{\bf k}(S)$ with $r\cdot s= rs$ for $r\in R$. The linear maps $\mathfrak{s}_{\omega}\in \mathrm{End}_{\bf k}(S)$ define an algebra homomorphism $\psi:{\bf k}\left\langle Q_{\Omega}\right\rangle\longrightarrow \mathrm{End}_{\bf k}(S)$ with $\psi(Q_{\omega})=\mathfrak{s}_{\omega}$. By the universal property of free product, there exists a unique algebra homomorphism $\theta:{\bf k}\left\langle R,{\bf k}\left\langle Q_{\Omega}\right\rangle\right\rangle\longrightarrow \mathrm{End}_{\bf k}(S)$. For $\alpha,\beta\in\Omega, s\in S,r\in R$, we have
		\begin{equation*}
			\begin{split}
				&\theta(Q_{\alpha}rQ_{\beta}-P_{\alpha}(r)Q_{\beta}+Q_{\beta}P_{\alpha}(r)+\lambda_{\beta}Q_{\alpha}r+\lambda_{\alpha}Q_{\beta}r)(s)\\
				=&\theta(Q_{\alpha}rQ_{\beta})(s)-\theta(P_{\alpha}(r)Q_{\beta})(s)+\theta(Q_{\beta}P_{\alpha}(r))(s)+\theta(\lambda_{\beta}Q_{\alpha}r)(s)+\theta(\lambda_{\alpha}Q_{\beta}r)(s)\\
				=&\mathfrak{s}_{\alpha}(r\mathfrak{s}_{\beta}(s))-P_{\alpha}(r)\mathfrak{s}_{\beta}(s)+\mathfrak{s}_{\alpha}(P_\alpha(r)s)+\lambda_{\beta}\mathfrak{s}_{\alpha}(rs)+\lambda_{\alpha}\mathfrak{s}_{\beta}(rs)\\
				=&0.
			\end{split}
		\end{equation*}
		Since $I_{R,Q_{\Omega}}$ is the ideal generated by elements of the form
		\begin{equation*}
			Q_{\alpha}rQ_{\beta}-P_{\alpha}(r)Q_{\beta}+Q_{\beta}P_{\alpha}(r)+\lambda_{\beta}Q_{\alpha}r+\lambda_{\alpha}Q_{\beta}r,
		\end{equation*}
		$\theta(I_{R,Q_{\Omega}})=0$. Hence $\theta$ induces an algebra homomorphism $\overline{\varphi}:R_{MRB}\left\langle Q_{\Omega}\right\rangle\longrightarrow \mathrm{End}_{\bf k}(S)$ that defines an $R_{MRB}\left\langle Q_{\Omega}\right\rangle$-module structure on $S$.
		
		(b). For any $R_{MRB}\left\langle Q_{\Omega}\right\rangle$-module $S$, the algebra homomorphism $\theta:R_{MRB}\left\langle Q_{\Omega}\right\rangle\longrightarrow \mathrm{End}_{\bf k}(S)$ restricted to the subalgebra $R$ gives an $R$-module structure on $S$.
		For each $\omega\in\Omega$, defines $\mathfrak{s}_{\omega}(s):=Q_{\omega}\cdot s$. For $r\in R$, $s\in S$ and $\alpha,\beta\in\Omega$,
		\begin{align*}
			P_{\alpha}(r)\mathfrak{s}_{\beta}(s)=&P_{\alpha}(r)Q_{\beta}(s)=(P_{\alpha}(r)Q_{\beta})s\\
			=&(Q_{\alpha}rQ_{\beta}+Q_{\beta}P_{\alpha}(r)+\lambda_{\beta}Q_{\alpha}r+\lambda_{\alpha}Q_{\beta}r)s\\
			=&Q_{\alpha}rQ_{\beta}s+Q_{\beta}P_{\alpha}(r)s+\lambda_{\beta}Q_{\alpha}rs+\lambda_{\alpha}Q_{\beta}rs\\
			=&Q_{\alpha}(rQ_{\beta}s)+Q_{\beta}(P_{\alpha}(r)s)+\lambda_{\beta}Q_{\alpha}(rs)+\lambda_{\alpha}Q_{\beta}(rs)\\
			=&\mathfrak{s}_{\alpha}(rQ_{\beta}s)+\mathfrak{s}_{\beta}(P_{\alpha}(r)s)+\lambda_{\beta}\mathfrak{s}_{\alpha}(rs)+\lambda_{\alpha}\mathfrak{s}_{\beta}(rs)\\
			=&\mathfrak{s}_{\alpha}(r\mathfrak{s}_{\beta}(s))+\mathfrak{s}_{\beta}(P_{\alpha}(r)s)+\lambda_{\beta}\mathfrak{s}_{\alpha}(rs)+\lambda_{\alpha}\mathfrak{s}_{\beta}(rs).
		\end{align*}
		Hence $(S,\mathfrak{s}_{\omega}(s))$ is a left $(R,P_{\Omega})$-module.
	\end{proof}

	In particular, left $(R, P_{\Omega})$-ideals of $R_{MRB}\left\langle Q_{\Omega}\right\rangle$ are of the form $(I,\overline{P}_{\Omega}|_I)$ where $I$ is a left ideal of $R_{MRB}\left\langle Q_{\Omega}\right\rangle$ and $\overline{P}_{\Omega}=(\overline{P}_{\omega})_{\omega\in\Omega}:R_{MRB}\left\langle Q_{\Omega}\right\rangle\longrightarrow R_{MRB}\left\langle Q_{\Omega}\right\rangle$ is the left multiplication by $Q_{\omega}$ , for $\omega\in\Omega$.
\end{prop}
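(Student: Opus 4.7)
The proof amounts to unwinding the defining relation of $R_{MRB}\langle Q_{\Omega}\rangle$ through the universal property that produced it. The plan is to exhibit two constructions, one in each direction, that are strict inverses at the level of module structures, and then to observe that $\mathbf{k}$-linear maps qualify as morphisms on one side precisely when they qualify on the other. Once both steps are in hand, the statement follows as an isomorphism (not merely equivalence) of categories.

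For part (a), given $(S,\mathfrak{s}_{\Omega})$, I would assemble an action of $R_{MRB}\langle Q_{\Omega}\rangle$ on $S$ by combining representations. The $R$-module structure on $S$ is an algebra homomorphism $\varphi : R \to \mathrm{End}_{\bf k}(S)$, and $Q_{\omega}\mapsto \mathfrak{s}_{\omega}$ defines an algebra homomorphism $\psi:{\bf k}\langle Q_{\Omega}\rangle \to \mathrm{End}_{\bf k}(S)$. Invoking the universal property of the free product yields a unique algebra homomorphism $\theta:{\bf k}\langle R,{\bf k}\langle Q_{\Omega}\rangle\rangle \to \mathrm{End}_{\bf k}(S)$. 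The key step is to check that $\theta$ annihilates each generator $Q_{\alpha}rQ_{\beta}-P_{\alpha}(r)Q_{\beta}+Q_{\beta}P_{\alpha}(r)+\lambda_{\beta}Q_{\alpha}r+\lambda_{\alpha}Q_{\beta}r$ of $I_{R,Q_{\Omega}}$. Evaluated on an arbitrary $s\in S$, this expression becomes $\mathfrak{s}_{\alpha}(r\mathfrak{s}_{\beta}(s))-P_{\alpha}(r)\mathfrak{s}_{\beta}(s)+\mathfrak{s}_{\beta}(P_{\alpha}(r)s)+\lambda_{\beta}\mathfrak{s}_{\alpha}(rs)+\lambda_{\alpha}\mathfrak{s}_{\beta}(rs)$, which vanishes by Eq.~(\ref{a}). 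Hence $\theta$ descends to $R_{MRB}\langle Q_{\Omega}\rangle$, equipping $S$ with the desired module structure.

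For part (b), given a left $R_{MRB}\langle Q_{\Omega}\rangle$-module $S$, the composite with the canonical algebra map $R \to R_{MRB}\langle Q_{\Omega}\rangle$ provides $S$ with an $R$-module structure, and setting $\mathfrak{s}_{\omega}(s):=Q_{\omega}\cdot s$ defines the operators. To verify~(\ref{a}), I would apply the defining relation of $R_{MRB}\langle Q_{\Omega}\rangle$, which holds identically in the operator algebra on $S$, to an arbitrary $s\in S$ and translate each occurrence of $Q_{\omega}(\cdot)$ back to $\mathfrak{s}_{\omega}(\cdot)$. The identity then reads off directly.

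It remains to glue both constructions into a categorical isomorphism. By construction, starting from $(S,\mathfrak{s}_{\Omega})$, passing through (a), and then back through (b), returns the same operators $\mathfrak{s}_{\omega}$, and the analogous round-trip in the other direction is likewise the identity; so the object-level assignment is bijective. For morphisms, a $\mathbf{k}$-linear map $f:S\to S'$ is $R_{MRB}\langle Q_{\Omega}\rangle$-linear iff it is $R$-linear and commutes with the action of every $Q_{\omega}$, which is precisely the condition that $f$ be a left $(R,P_{\Omega})$-module homomorphism. I expect the only delicate point to be the sign bookkeeping in $I_{R,Q_{\Omega}}$ so that the generators translate verbatim into Eq.~(\ref{a}); beyond this, the argument is formal and rests entirely on the universal property of the free product.
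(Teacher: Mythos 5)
Your proposal is correct and follows essentially the same route as the paper: the universal property of the free product plus vanishing on the generators of $I_{R,Q_{\Omega}}$ for one direction, and restriction along $R\to R_{MRB}\langle Q_{\Omega}\rangle$ with $\mathfrak{s}_{\omega}(s):=Q_{\omega}\cdot s$ for the other. Your added check that the two constructions are mutually inverse and that morphisms correspond is a welcome completion of the categorical claim, which the paper's proof leaves implicit.
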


Now we generalize the Baer Criterion to the case of $_{(R,P_{\Omega})}\bf{Mod}$.

\begin{prop}{\label{G}}
	Let $(M,\mathfrak{m}_{\Omega})$ be a left $(R,P_{\Omega})$-module, Then $(M,\mathfrak{m}_{\Omega})$ is injective in $_{(R,P_{\Omega})}\bf{Mod}$ if and only if, for every left $(R,P_{\Omega})$-ideal $(I,\overline{P}_{\Omega}|_I)$ of $(R_{MRB}\left\langle Q_{\Omega}\right\rangle,\overline{P}_{\Omega})$, every left $(R,P_{\Omega})$-module homomorphism $\varphi:(I,\overline{P}_{\Omega}|_I)\longrightarrow (M,\mathfrak{m}_{\Omega})$ can be extended to $\overline{\varphi}:(R_{MRB}\left\langle Q_{\Omega}\right\rangle,\overline{P}_{\Omega})\longrightarrow (M,\mathfrak{m}_{\Omega})$.
\end{prop}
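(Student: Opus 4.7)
The plan is to imitate the classical proof of Baer's Criterion, transporting all the ``ideal'' data into the ring $R_{MRB}\langle Q_\Omega\rangle$ via the category equivalence in Proposition~\ref{F}.

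For the direction ($\Rightarrow$), suppose $(M,\mathfrak{m}_\Omega)$ is injective. Given any left $(R,P_\Omega)$-ideal $(I,\overline{P}_\Omega|_I)$ of $(R_{MRB}\langle Q_\Omega\rangle,\overline{P}_\Omega)$ and any left $(R,P_\Omega)$-module homomorphism $\varphi:(I,\overline{P}_\Omega|_I)\to (M,\mathfrak{m}_\Omega)$, the inclusion $\iota:(I,\overline{P}_\Omega|_I)\hookrightarrow(R_{MRB}\langle Q_\Omega\rangle,\overline{P}_\Omega)$ is a monomorphism in $_{(R,P_\Omega)}\mathbf{Mod}$, so the definition of injectivity directly supplies the required extension. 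This direction is immediate.

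For the direction ($\Leftarrow$), I would use Zorn's lemma. Let $\theta:(N,\mathfrak{n}_\Omega)\hookrightarrow (N',\mathfrak{n}'_\Omega)$ be a monomorphism and $\varphi:(N,\mathfrak{n}_\Omega)\to (M,\mathfrak{m}_\Omega)$ a homomorphism, and identify $N$ with its image in $N'$. Form the poset $\mathcal{S}$ of pairs $(L,\psi_L)$ where $N\subseteq L\subseteq N'$ is a left $(R,P_\Omega)$-submodule and $\psi_L:L\to M$ extends $\varphi$, ordered by extension. Chains have obvious upper bounds, so Zorn produces a maximal element $(L_0,\psi_0)$. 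Assume for contradiction $L_0\subsetneq N'$, fix $x\in N'\setminus L_0$, and, using the $R_{MRB}\langle Q_\Omega\rangle$-module structure from Proposition~\ref{F}, define the conductor
\[ I := \{a\in R_{MRB}\langle Q_\Omega\rangle \mid a\cdot x \in L_0\}. \]
Then $I$ is a left ideal of $R_{MRB}\langle Q_\Omega\rangle$, and because $L_0$ is stable under each $\mathfrak{n}'_\omega$ while $\mathfrak{n}'_\omega(a\cdot x)=(Q_\omega a)\cdot x$, the set $I$ is closed under left multiplication by $Q_\omega$; thus $(I,\overline{P}_\Omega|_I)$ is a left $(R,P_\Omega)$-ideal. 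The assignment $\chi(a):=\psi_0(a\cdot x)$ defines a left $(R,P_\Omega)$-module homomorphism $\chi:(I,\overline{P}_\Omega|_I)\to (M,\mathfrak{m}_\Omega)$, and the hypothesis extends it to $\widetilde{\chi}:(R_{MRB}\langle Q_\Omega\rangle,\overline{P}_\Omega)\to (M,\mathfrak{m}_\Omega)$.

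The main obstacle is to assemble from $\widetilde{\chi}$ a genuine left $(R,P_\Omega)$-module extension of $\psi_0$ that is strictly larger than $\psi_0$. Setting
\[ \psi_1:L_0+R_{MRB}\langle Q_\Omega\rangle\cdot x\longrightarrow M,\quad \psi_1(l+a\cdot x):=\psi_0(l)+\widetilde{\chi}(a), \]
well-definedness follows from the defining property of $I$ together with $\widetilde{\chi}|_I=\chi$; $R$-linearity is routine; and the key $\Omega$-compatibility $\psi_1\circ\mathfrak{n}'_\omega=\mathfrak{m}_\omega\circ\psi_1$ reduces via Proposition~\ref{F} to $\widetilde{\chi}\circ\overline{P}_\omega=\mathfrak{m}_\omega\circ\widetilde{\chi}$, which holds because $\widetilde{\chi}$ is a left $(R,P_\Omega)$-module homomorphism. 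This contradicts the maximality of $(L_0,\psi_0)$, forcing $L_0=N'$ and completing the proof.
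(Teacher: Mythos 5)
Your proposal is correct and follows essentially the same route as the paper: the easy direction from the definition of injectivity, then Zorn's lemma on extensions, the conductor ideal $\{a\in R_{MRB}\langle Q_\Omega\rangle\mid a\cdot x\in L_0\}$ (shown to be $\overline{P}_\Omega$-stable via Proposition~\ref{F}), extension by hypothesis, and the map $l+a\cdot x\mapsto \psi_0(l)+\widetilde{\chi}(a)$ contradicting maximality. The only cosmetic difference is that the paper writes the extension as $\psi(v)+r\,\theta'(\mathbf{1})$ rather than $\psi_0(l)+\widetilde{\chi}(a)$, which agrees with yours by $R_{MRB}\langle Q_\Omega\rangle$-linearity of the extended map.
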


\begin{proof}
	We refer to the proof of the Baer Criterion in \cite{R}.
	
	Assume that $(M,\mathfrak{m}_{\Omega})$ is an injective left $(R,P_{\Omega})$-module, then every left $(R,P_{\Omega})$-module homomorphism $\varphi: (I,\overline{P}_{\Omega}|_I)\longrightarrow (M,\mathfrak{m}_{\Omega})$ can be extended to $\overline{\varphi}:(R_{MRB}\left\langle Q_{\Omega}\right\rangle,\overline{P}_{\Omega})\longrightarrow (M,\mathfrak{m}_{\Omega})$ by the injectivity of $(M,\mathfrak{m}_{\Omega})$.
	
	Conversely, suppose that for any left $(R,P_{\Omega})$-ideal $(I,\overline{P}_{\Omega}|_I)$ of $(R_{MRB}\left\langle Q_{\Omega}\right\rangle,\overline{P}_{\Omega})$, any left $(R,P_{\Omega})$-module homomorphism $\varphi: (I,\overline{P}_{\Omega}|_I)\longrightarrow (M,\mathfrak{m}_{\Omega})$ can extend to $\overline{\varphi}:(R_{MRB}\left\langle Q_{\Omega}\right\rangle,\overline{P}_{\Omega})\longrightarrow (M,\mathfrak{m}_{\Omega})$. We prove $(M,\mathfrak{m}_{\Omega})$ is injective.
	
	Let $(T,\mathfrak{t}_{\Omega})$ be a left $(R,P_{\Omega})$-module, $(S,\mathfrak{s}_{\Omega})$ be a left $(R,P_{\Omega})$-submodule of $(T,\mathfrak{t}_{\Omega})$ and $f$ be a left $(R,P_{\Omega})$-module monomorphism from $(S,\mathfrak{s}_{\Omega})$ to $(T,\mathfrak{t}_{\Omega})$. Suppose that $\varphi':(S,\mathfrak{s}_{\Omega})\longrightarrow (M,\mathfrak{m}_{\Omega})$ is a left $(R,P_{\Omega})$-module homomorphism. Define a set of left $(R,P_{\Omega})$-module homomorphisms
	\begin{equation*}
		F:=\{\psi: (V,\mathfrak{v}_{\Omega})\longrightarrow(M,\mathfrak{m}_{\Omega})|(S,\mathfrak{s}_{\Omega})\leqslant(V,\mathfrak{v}_{\Omega})\leqslant (T,\mathfrak{t}_{\Omega}),\quad \psi|_{(S,\mathfrak{s}_{\Omega})}=\varphi'\}.
	\end{equation*}
	Apparently $\varphi'\in F$, then $F$ is a non-empty set and partially order by the domain $(V,\mathfrak{v}_{\Omega})$. By Zorn's lemma, $F$ has a maximal element $\psi_{max}:(V,\mathfrak{v}_{\Omega})\longrightarrow (M,\mathfrak{m}_{\Omega})$.
	
	If $V=T$, then we are done.
	
	If $V\neq T$, we choose $a\in T\backslash V$ and define
	\begin{equation*}
		\overline{V}:=\{r\in R_{MRB}\left\langle Q_{\Omega}\right\rangle|ra\in V\}.
	\end{equation*}
	For each $\overline{r}\in \overline{V}\in R_{MRB}\left\langle Q_{\Omega}\right\rangle$ and $\omega\in\Omega$,
	\begin{equation*}
		(\overline{P}_{\omega}(\overline{r}))a=(Q_{\omega}\cdot \overline{r})a=Q_{\omega}\cdot (\overline{r}a).
	\end{equation*}
	By Item (a) of Proposition \ref{F}, $Q_{\omega}\cdot (\overline{r}a)\in \overline{V}$. Then $\overline{P}_{\omega}(\overline{V})\subseteq\overline{V}$ and $\overline{V}$ is a left $(R,P_{\Omega})$-ideal of $R_{MRB}\left\langle Q_{\Omega}\right\rangle$.
	Define a map
	\begin{equation*}
		\theta: \overline{V}\longrightarrow M,\quad r\longmapsto \psi(ra).
	\end{equation*}
	For $r_1\in R$ and $\overline{r}\in \overline{V}$,
	\begin{equation*}
		\theta(r_1\overline{r})=\psi(r_1\overline{r}a)=r_1\psi(\overline{r}a)=r_1\theta(\overline{r}).
	\end{equation*}
	Then $\theta$ is a left $(R,P_{\Omega})$-module homomorphism.
	
	By assumption, $\theta$ extends to $\theta':(R_{MRB}\left\langle Q_{\Omega}\right\rangle,\overline{P}_{\Omega})\longrightarrow (M,\mathfrak{m}_{\Omega})$ with $\theta'(r)=\psi(ra)$. Let $x:=\theta'({\bf 1}_{R_{MRB}\left\langle Q_{\Omega}\right\rangle})$ and define
	\begin{equation*}
		\eta: V+R_{MRB}\left\langle Q_{\Omega}\right\rangle a\longrightarrow M,\quad v+ra\longmapsto \psi(v)+rx,\quad \forall v\in V, r\in R_{MRB}\left\langle Q_{\Omega}\right\rangle.
	\end{equation*}
	
	If $v+ra=v'+r'a$ for $v,v'\in V$ and $r,r'\in R_{MRB}\left\langle Q_{\Omega}\right\rangle$, we have
	\begin{equation*}
		\psi(v-v')=\psi((r'-r)a)=\theta'(r'-r)=(r'-r)x,
	\end{equation*}
	and $\psi(v+ra)=\psi(v'+r'a)$. Then $\eta$ is well-defined. For any $v+ra,v'+r'a\in V+R_{MRB}\left\langle Q_{\Omega}\right\rangle$, since
	\begin{equation*}
		\begin{split}
			\eta((v+ra)+(v'+r'a))&=\eta((v+v')+(ra+r'a))=\psi(v+v')+rx+r'x\\
			&=\psi(v)+\psi(v')+rx+r'x=\eta(v+ra)+\eta(v'+r'a),
		\end{split}
	\end{equation*}
	and
	\begin{equation*}
		r'\eta(v+ra)=r'(\psi(v)+rx)=\psi(r'v)+r'rx=
		\psi(r'v+r'rx)=\psi(r'(v+ra)),
	\end{equation*}
	$\eta$ is left $R_{MRB}\left\langle Q_{\Omega}\right\rangle$-module homomorphism. By Proposition \ref{F}, hence $\eta$ is a left $(R,P_{\Omega})$-module homomorphism such that $\eta|_{(S,\mathfrak{s}_{\Omega})}=\varphi'$. This contradicts the maximality of $\psi_{max}$. Hence $V=T$, as required.
\end{proof}

An abelian group $G$ is called divisible when for all $g\in G$ and nonzero $n\in\mathbb{Z}$, there exists $x\in G$ satisfying $g=nx$.

For $f\in\mathrm{Hom}_{\mathbb{Z}}(R_{MRB}\left\langle Q_{\Omega}\right\rangle,G)$ and $\omega\in\Omega$, define a set $\mathfrak{q}_{\Omega}:=(\mathfrak{q}_{\omega})_{\omega\in\Omega}$ of linear operators $\mathfrak{q}_{\omega}(f)=fQ_{\omega}$. For $r\in R$, $a\in R_{MRB}\left\langle Q_{\Omega}\right\rangle$ and $\alpha,\beta\in\Omega$, we have
\begin{equation*}
	\begin{split}
		P_{\alpha}(r)\mathfrak{q}_{\beta}(f)
		=&P_{\alpha}(r)(fQ_{\beta})=f(P_{\alpha}(r)Q_{\beta})\\
		=&f(Q_{\alpha}rQ_{\beta}+Q_{\beta}P_{\alpha}(r)+\lambda_{\beta}Q_{\alpha}r+\lambda_{\alpha}Q_{\beta}r)\\
		=&f(Q_{\alpha}rQ_{\beta})+f(Q_{\beta}P_{\alpha}(r))+f(\lambda_{\beta}Q_{\alpha}r)+f(\lambda_{\alpha}Q_{\beta}r)\\
		=&\mathfrak{q}_{\beta}(f)(Q_{\alpha}r)+\mathfrak{q}_{\beta}(P_{\alpha}(r)f)+\lambda_{\beta}\mathfrak{q}_{\alpha}(rf)+\lambda_{\alpha}\mathfrak{q}_{\beta}(rf)\\
		=&\mathfrak{q}_{\alpha}(r\mathfrak{q}_{\beta}(f))+\mathfrak{q}_{\beta}(P_{\alpha}(r)f)+\lambda_{\beta}\mathfrak{q}_{\alpha}(rf)+\lambda_{\alpha}\mathfrak{q}_{\beta}(rf).
	\end{split}
\end{equation*}
Hence $(\mathrm{Hom}_{\mathbb{Z}}(R_{MRB}\left\langle Q_{\Omega}\right\rangle,G),\mathfrak{q}_{\Omega})$ is a left $(R,P_{\Omega})$-module.

\begin{prop}{\label{H}}
	With the above notations, the pair $(\mathrm{Hom}_{\mathbb{Z}}(R_{MRB}\left\langle Q_{\Omega}\right\rangle,G),\mathfrak{q}_{\Omega})$ is an injective left $(R,P_{\Omega})$-module.
\end{prop}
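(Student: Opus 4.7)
The plan is to apply the generalized Baer Criterion from Proposition~\ref{G}. Fix a left $(R,P_\Omega)$-ideal $(I,\overline{P}_{\Omega}|_I)$ of $R_{MRB}\langle Q_\Omega\rangle$ and a left $(R,P_\Omega)$-module homomorphism $\varphi:(I,\overline{P}_{\Omega}|_I)\to(\mathrm{Hom}_\mathbb{Z}(R_{MRB}\langle Q_\Omega\rangle,G),\mathfrak{q}_\Omega)$; it will suffice to extend $\varphi$ to a left $(R,P_\Omega)$-module homomorphism defined on all of $R_{MRB}\langle Q_\Omega\rangle$. By Proposition~\ref{F}, $\varphi$ is equivalently a left $R_{MRB}\langle Q_\Omega\rangle$-module homomorphism, and the left action of $R_{MRB}\langle Q_\Omega\rangle$ on $\mathrm{Hom}_\mathbb{Z}(R_{MRB}\langle Q_\Omega\rangle,G)$ that induces $\mathfrak{q}_\omega(f)=fQ_\omega$ is the standard one, $(t\cdot f)(s)=f(st)$.

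I will imitate the classical proof that $\mathrm{Hom}_\mathbb{Z}(S,G)$ is an injective $S$-module when $G$ is divisible. Set $\phi:I\to G$ by $\phi(i):=\varphi(i)({\bf 1}_{R_{MRB}\langle Q_\Omega\rangle})$, which is additive. Since $G$ is divisible, hence injective in the category of abelian groups, $\phi$ extends to a group homomorphism $\overline{\phi}:R_{MRB}\langle Q_\Omega\rangle\to G$. Then define
$$\overline{\varphi}:R_{MRB}\langle Q_\Omega\rangle\longrightarrow \mathrm{Hom}_\mathbb{Z}(R_{MRB}\langle Q_\Omega\rangle,G),\qquad \overline{\varphi}(r)(s):=\overline{\phi}(sr).$$
Since $s\mapsto sr$ is additive and $\overline{\phi}$ is additive, $\overline{\varphi}(r)\in\mathrm{Hom}_\mathbb{Z}(R_{MRB}\langle Q_\Omega\rangle,G)$. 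For $r,t\in R_{MRB}\langle Q_\Omega\rangle$ one computes
$$\overline{\varphi}(tr)(s)=\overline{\phi}(s(tr))=\overline{\phi}((st)r)=\overline{\varphi}(r)(st)=(t\cdot\overline{\varphi}(r))(s),$$
so $\overline{\varphi}$ is left $R_{MRB}\langle Q_\Omega\rangle$-linear, and by Proposition~\ref{F} it is a left $(R,P_\Omega)$-module homomorphism.

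It remains to verify $\overline{\varphi}|_I=\varphi$. For $r\in I$ and $s\in R_{MRB}\langle Q_\Omega\rangle$, using the $R_{MRB}\langle Q_\Omega\rangle$-linearity of $\varphi$ (Proposition~\ref{F} again),
$$\overline{\varphi}(r)(s)=\overline{\phi}(sr)=\phi(sr)=\varphi(sr)({\bf 1})=(s\cdot\varphi(r))({\bf 1})=\varphi(r)({\bf 1}\cdot s)=\varphi(r)(s),$$
so $\overline{\varphi}$ extends $\varphi$, and Proposition~\ref{G} yields the desired injectivity. The main bookkeeping obstacle I anticipate is keeping the two equivalent module pictures straight via Proposition~\ref{F}: one must consistently reconcile the operator-level action $\mathfrak{q}_\omega(f)=fQ_\omega$ with the standard ring-level left action $(t\cdot f)(s)=f(st)$ on $\mathrm{Hom}_\mathbb{Z}(R_{MRB}\langle Q_\Omega\rangle,G)$, so that both $\varphi$ and the constructed $\overline{\varphi}$ are treated as $R_{MRB}\langle Q_\Omega\rangle$-linear, and so that the extension formula $\overline{\varphi}(r)(s):=\overline{\phi}(sr)$ indeed recovers $\varphi$ on the ideal $I$.
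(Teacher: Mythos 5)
Your proposal is correct and follows essentially the same route as the paper: reduce to the Baer criterion of Proposition~\ref{G}, evaluate at ${\bf 1}_{R_{MRB}\left\langle Q_{\Omega}\right\rangle}$ to get an additive map into the divisible group $G$, extend it by $\mathbb{Z}$-injectivity, and define the extension by $\overline{\varphi}(r)(s)=\overline{\phi}(sr)$. The only (harmless) difference is that you verify equivariance once at the level of the ring $R_{MRB}\left\langle Q_{\Omega}\right\rangle$ via Proposition~\ref{F}, whereas the paper checks $R$-linearity and compatibility with $\overline{P}_{\omega}$ and $\mathfrak{q}_{\omega}$ separately.
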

\begin{proof}
	By Proposition \ref{G}, we need to prove that for any left ideal $I$ of $R_{MRB}\left\langle Q_{\Omega}\right\rangle$ with the embedding $\varphi: (I,\overline{P}_{\Omega}|_I)\longrightarrow (R_{MRB}\left\langle Q_{\Omega}\right\rangle,\overline{P}_{\Omega})$, and for any left $(R,P_{\Omega})$-module homomorphism
	\begin{equation*}
		\psi:(I,\overline{P}_{\Omega}|_I)\longrightarrow(\mathrm{Hom}_{\mathbb{Z}}(R_{MRB}\left\langle Q_{\Omega}\right\rangle,G),\mathfrak{q}_{\Omega}),
	\end{equation*}
	there is a left $(R,P_{\Omega})$-module homomorphism $\theta':(R_{MRB}\left\langle Q_{\Omega}\right\rangle,\overline{P}_{\Omega})\longrightarrow (\mathrm{Hom}_{\mathbb{Z}}(R_{MRB}\left\langle Q_{\Omega}\right\rangle,G),\mathfrak{q}_{\Omega})$ such that the following diagram is commutative
	\begin{equation*}
		\xymatrix{
			&  (\mathrm{Hom}_{\mathbb{Z}}(R_{MRB}\left\langle Q_{\Omega}\right\rangle,G),\mathfrak{q}_{\Omega}) & \\
			0\ar[r]^-{} &(I,\overline{P}_{\Omega}|_I) \ar[u]^-{\psi} \ar[r]_-{\varphi} & (R_{MRB}\left\langle Q_{\Omega}\right\rangle,\overline{P}_{\Omega}). \ar@{-->}[ul]_-{\theta'}
		}
	\end{equation*}
	
	Define a map $\eta : I\longrightarrow G$ by $\eta(i)=\psi(i)({\bf 1}_{R_{MRB}\left\langle Q_{\Omega}\right\rangle})$ for $i\in I$. For $z\in \mathbb{Z}$, since
	\begin{equation*}
		z\eta(i)=z\psi(i)({\bf 1}_{R_{MRB}\left\langle Q_{\Omega}\right\rangle})=\psi(zi)({\bf 1}_{R_{MRB}\left\langle Q_{\Omega}\right\rangle})=\eta(zi),
	\end{equation*}
	$\eta$ is a $\mathbb{Z}$-module homomorphism.
	By the injectivity of divisible abelian group $G$ as $\mathbb{Z}$-modules \cite{R}, we have a $\mathbb{Z}$-linear extension $\theta:R_{MRB}\left\langle Q_{\Omega}\right\rangle\longrightarrow G$, such that $\eta=\theta\circ \varphi$.
	We define $\theta':R_{MRB}\left\langle Q_{\Omega}\right\rangle \longrightarrow\mathrm{Hom}_{\mathbb{Z}}(R_{MRB}\left\langle Q_{\Omega}\right\rangle,G)$ by $\theta'(a)(b)=\theta(ba)$ for $a,b\in R_{MRB}\left\langle Q_{\Omega}\right\rangle$.
	Since $\theta'(a)(b)=\theta(ba)\in G$, $\theta'(a)\in \mathrm{Hom}_{\mathbb{Z}}(R_{MRB}\left\langle Q_{\Omega}\right\rangle,G)$.
	
	For $r\in R$, $a,b\in R_{MRB}\left\langle Q_{\Omega}\right\rangle$, we have
	\begin{equation*}
		\theta'(ra)(b)=\theta(b(ra))=\theta((br)a)=\theta'(a)(br)=(\theta'(a)(b))r=(r\theta'(a))(b).
	\end{equation*}
	Then $\theta'$ is also an $R$-module homomorphism.
	For each $\omega\in \Omega$ and $i\in I$, we have
	\begin{equation*}
		\begin{split}
			((\theta'\circ\overline{P}_{\omega})(a))(b)&=\theta'(\overline{P}_{\omega}(a))(b)=\theta(b\overline{P}_{\omega}(a))=\theta(b(Q_{\omega}a))\\&
			=\theta((bQ_{\omega})a)=\theta'(a)(bQ_{\omega})=((\mathfrak{q}_{\omega}\circ \theta')(a))(b).
		\end{split}
	\end{equation*}
	Then $\theta':(R_{MRB}\left\langle Q_{\Omega}\right\rangle,\overline{P}_{\Omega})\longrightarrow(\mathrm{Hom}_{\mathbb{Z}}(R_{MRB}\left\langle Q_{\Omega}\right\rangle,G),\mathfrak{q}_{\Omega})$ is also a left $(R,P_{\Omega})$-module homomorphism.
	For $r'\in R$, we have
	\begin{equation*}
		((\theta'\circ\varphi)(i))(r')=\theta'(i)(r')=\theta(r'i)=\eta(r'i)=\psi(r'i)({\bf 1}_{R_{MRB}\left\langle Q_{\Omega}\right\rangle})=(r'\psi(i))({\bf 1}_{R_{MRB}\left\langle Q_{\Omega}\right\rangle})=\psi(i)r'.
	\end{equation*}
	For $r'=Q_{\omega}$, we have
	\begin{equation*}
		\begin{split}
			((\theta'\circ\varphi)(i))(Q_{\omega})& = \theta'(i)(Q_{\omega})=\theta(Q_{\omega}i)=\eta(Q_{\omega}i)=\psi(Q_{\omega}i)({\bf 1}_{R_{MRB}\left\langle Q_{\Omega}\right\rangle})\\
			& = ((\psi\circ\overline{P}_{\omega})(i))({\bf 1}_{R_{MRB}\left\langle Q_{\Omega}\right\rangle})=((q_{\omega}\circ \psi)(i))({\bf 1}_{R_{MRB}\left\langle Q_{\Omega}\right\rangle})\\
			& = (q_{\omega}(\psi(i)))({\bf 1}_{R_{MRB}\left\langle Q_{\Omega}\right\rangle})=\psi(i)(Q_{\omega}).
		\end{split}
	\end{equation*}
	Thus $\theta'\circ \varphi=\psi$. Hence $(\mathrm{Hom}_{\mathbb{Z}}(R_{MRB}\left\langle Q_{\Omega}\right\rangle,G),\mathfrak{q}_{\Omega})$ is an injective left $(R,P_{\Omega})$-module.
\end{proof}

\begin{theorem}
	Let $(M,\mathfrak{m}_{\Omega})$ be a left $(R,P_{\Omega})$-module. Then $(M,\mathfrak{m}_{\Omega})$ can be embedded into an injective $(R,P_{\Omega})$-module.
\end{theorem}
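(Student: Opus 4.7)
The plan is to mimic the standard ``enough injectives'' argument for module categories, using the auxiliary machinery already assembled: the identification of left $(R,P_{\Omega})$-modules with left $R_{MRB}\langle Q_{\Omega}\rangle$-modules (Proposition~\ref{F}), and the injectivity of $(\mathrm{Hom}_{\mathbb{Z}}(R_{MRB}\langle Q_{\Omega}\rangle,G),\mathfrak{q}_{\Omega})$ for any divisible abelian group $G$ (Proposition~\ref{H}).

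First, I would invoke the classical fact from abelian group theory that every abelian group embeds into a divisible one: applying this to the underlying abelian group of $M$, choose a divisible abelian group $G$ together with a $\mathbb{Z}$-linear injection $i\colon M\hookrightarrow G$. Second, by Proposition~\ref{F} I may regard $(M,\mathfrak{m}_{\Omega})$ as a left $R_{MRB}\langle Q_{\Omega}\rangle$-module, with $Q_{\omega}\cdot m = \mathfrak{m}_{\omega}(m)$ and with the given $R$-action. I then define
\begin{equation*}
\iota\colon M \longrightarrow \mathrm{Hom}_{\mathbb{Z}}(R_{MRB}\langle Q_{\Omega}\rangle, G),
\qquad \iota(m)(a) := i(a\cdot m),
\end{equation*}
for $m\in M$ and $a\in R_{MRB}\langle Q_{\Omega}\rangle$. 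Clearly each $\iota(m)$ is $\mathbb{Z}$-linear because $i$ is $\mathbb{Z}$-linear and the action is $\mathbb{Z}$-bilinear, so $\iota$ lands in the right target.

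Next I would verify that $\iota$ is a left $(R,P_{\Omega})$-module homomorphism. Compatibility with the $R$-action and with the operators $\mathfrak{m}_{\omega}$ vs.\ $\mathfrak{q}_{\omega}$ is a routine unwinding of definitions: on one side the action on $M$ is internal to $R_{MRB}\langle Q_{\Omega}\rangle$ (so $(r\cdot m)$ and $Q_{\omega}\cdot m$ appear inside $i(\cdot)$), while on the other side the action on $\mathrm{Hom}_{\mathbb{Z}}(R_{MRB}\langle Q_{\Omega}\rangle,G)$ is pre-composition with left multiplication in $R_{MRB}\langle Q_{\Omega}\rangle$; associativity of this multiplication gives the matching. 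Injectivity of $\iota$ is immediate: if $\iota(m)=0$, then evaluating at $\mathbf{1}_{R_{MRB}\langle Q_{\Omega}\rangle}$ yields $i(m)=0$, whence $m=0$ by injectivity of $i$.

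Finally, by Proposition~\ref{H} the codomain $(\mathrm{Hom}_{\mathbb{Z}}(R_{MRB}\langle Q_{\Omega}\rangle,G),\mathfrak{q}_{\Omega})$ is an injective object in ${}_{(R,P_{\Omega})}\mathbf{Mod}$, and $\iota$ realises $(M,\mathfrak{m}_{\Omega})$ as a submodule of it. The only nontrivial step is the verification that $\iota$ really intertwines all of the structure (the $R$-action and each $\mathfrak{m}_{\omega}$); the main place where one has to be careful is ensuring the conventions used to define the left $(R,P_{\Omega})$-module structure on the $\mathrm{Hom}$ group in Proposition~\ref{H} line up with the definition of $\iota$, but once the conventions are pinned down the check is mechanical.
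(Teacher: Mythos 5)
Your proposal is correct and follows essentially the same route as the paper: identify $M$ with a left $R_{MRB}\langle Q_{\Omega}\rangle$-module via Proposition~\ref{F}, embed the underlying abelian group into a divisible group $G$, and map into the injective object $(\mathrm{Hom}_{\mathbb{Z}}(R_{MRB}\langle Q_{\Omega}\rangle,G),\mathfrak{q}_{\Omega})$ of Proposition~\ref{H}. The only cosmetic difference is that the paper factors your map $\iota(m)(a)=i(a\cdot m)$ as the composite $M\to\mathrm{Hom}_{\mathbb{Z}}(R_{MRB}\langle Q_{\Omega}\rangle,M)\to\mathrm{Hom}_{\mathbb{Z}}(R_{MRB}\langle Q_{\Omega}\rangle,G)$, and note that the relevant module action on the $\mathrm{Hom}$ group is precomposition with \emph{right} multiplication, $(rf)(a)=f(ar)$, which is indeed the convention your formula for $\iota$ requires.
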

\begin{proof}
	By Proposition \ref{F}, $(M,\mathfrak{m}_{\Omega})$ is a left $R_{MRB}\left\langle Q_{\Omega}\right\rangle$-module under the action
	\begin{equation*}
		R_{MRB}\left\langle Q_{\Omega}\right\rangle\times M\longrightarrow M,\ r\cdot m\longmapsto rm,\ Q_{\omega}\cdot m\longmapsto \mathfrak{m}_{\omega}(m),
	\end{equation*}
	for $r\in R$, $m\in M$ and $\omega\in\Omega$.
	
	The pair $(\mathrm{Hom}_{\mathbb{Z}}(R_{MRB}\left\langle Q_{\Omega}\right\rangle,M),\overline{\mathfrak{q}}_{\Omega})$ is a left $(R,P_{\Omega})$-module, which is analogous to the supplementary content preceding Proposition \ref{H}, for $f\in \mathrm{Hom}_{\mathbb{Z}}(R_{MRB}\left\langle Q_{\Omega}\right\rangle,M)$ and $\omega\in\Omega$. Define a map
	\begin{equation*}
		\phi:(M,\mathfrak{m}_{\Omega})\longrightarrow(\mathrm{Hom}_{\mathbb{Z}}(R_{MRB}\left\langle Q_{\Omega}\right\rangle,M),\overline{\mathfrak{q}}_{\Omega}),\quad m\longmapsto f_m,
	\end{equation*}
	where $f_m(a) = am$, for $a \in R_{MRB}\left\langle Q_{\Omega}\right\rangle$. This is well-defined since each $f_m$ is a $\mathbb{Z}$-module homomorphism.
	
	For any $r\in R$, $a\in R_{MRB}\left\langle Q_{\Omega}\right\rangle$ and $m\in M$, we have
	\begin{equation*}
		\begin{split}
			\phi(rm)(a)&=f_{rm}(a)=a(rm)=(ar)m\\
			&=f_m(ar)=rf_m(a)=(r\phi(m))(a).
		\end{split}
	\end{equation*}
	Then $\phi$ is a left $R$-module homomorphism.
	For $\omega\in\Omega$, $m\in M$ and $a\in R_{MRB}\left\langle Q_{\Omega}\right\rangle$,
	\begin{equation*}
		\begin{split}
			((\phi\circ \mathfrak{m}_{\omega})(m))(a)& = \phi(\mathfrak{m}_{\omega}(m))(a)=a(Q_{\omega}m)=(aQ_{\omega})m
			= f_m(aQ_{\omega})\\ & = \phi(m)(aQ_{\omega})=(\overline{\mathfrak{q}}_{\omega}(\phi(m)))(a)=(\overline{\mathfrak{q}}_{\omega}\circ \phi(m))(a).
		\end{split}
	\end{equation*}
	Hence $\phi$ is a left $(R,P_{\Omega})$-module homomorphism.
	We finally show that $\phi$ is a monomorphism, for any $m_1,m_2\in M$, if $f_{m_1}=f_{m_2}$, then $am_1=f_{m_1}(a)=f_{m_2}(a)=am_2$ for all $a\in R_{MRB}\left\langle Q_{\Omega}\right\rangle$. If $a={\bf 1}_{R_{MRB}\left\langle Q_{\Omega}\right\rangle}$, then $m_1=m_2$.
	
	Every abelian group is a subgroup of a divisible abelian group \cite{R}. Suppose $G$ is a divisible abelian group such that $M$ as an abelian group is a subgroup of $G$ and $\theta':M\longrightarrow G$ is the embedding map. We define
	\begin{equation*}
		\theta:(\mathrm{Hom}_{\mathbb{Z}}(R_{MRB}\left\langle Q_{\Omega}\right\rangle,M),\overline{\mathfrak{q}}_{\Omega})\longrightarrow (\mathrm{Hom}_{\mathbb{Z}}(R_{MRB}\left\langle Q_{\Omega}\right\rangle,G),\mathfrak{q}_{\Omega}),\quad \eta\longrightarrow \theta'\circ\eta,
	\end{equation*}
	for $\eta\in \mathrm{Hom}_{\mathbb{Z}}(R_{MRB}\left\langle Q_{\Omega}\right\rangle,M)$.
	
	For any $r\in R$ and $a\in R_{MRB}\left\langle Q_{\Omega}\right\rangle$, we have
	\begin{equation*}
		(\theta(r\eta))(a)=\theta'((r\eta)(a))=(\theta'\circ \eta)(ar)=r(\theta'\circ \eta)(a)=(r\theta(\eta))(a).
	\end{equation*}
	Then $\theta$ is also a left $R$-module homomorphism. For $\omega\in\Omega$,
	\begin{equation*}
		\begin{split}
			((\mathfrak{q}_{\omega}\circ \theta))(a)
			&=(\mathfrak{q}_{\omega}(\theta(\eta)))(a)=\theta(\eta)(aQ_{\omega})=(\theta'\circ \eta)(aQ_{\omega})\\
			&=(\theta'(\overline{\mathfrak{q}}_{\omega}(\eta)))(a)=((\theta\circ \overline{\mathfrak{q}}_{\omega}(\eta))(a).
		\end{split}
	\end{equation*}
	Hence $\theta$ is also a left $(R,P_{\Omega})$-module homomorphism.
	
	Since $\theta'$ is a monomorphism and $\theta(\eta)=\theta'\circ\eta$, $\theta$ is a monomorphism.
	By Proposition \ref{H}, $(\mathrm{Hom}_{\mathbb{Z}}(R_{MRB}\left\langle Q_{\Omega}\right\rangle,G),\mathfrak{q}_{\Omega}) $ is an injective left $(R,P_{\Omega})$-module, and the composition
	\begin{equation*}
		\theta\circ \phi :\ (M,\mathfrak{m}_{\Omega})\xrightarrow{\phi}(\mathrm{Hom}_{\mathbb{Z}}(R_{MRB}\left\langle Q_{\Omega}\right\rangle,M),\overline{\mathfrak{q}}_{\Omega})\xrightarrow{\theta}(\mathrm{Hom}_{\mathbb{Z}}(R_{MRB}\left\langle Q_{\Omega}\right\rangle,G),\mathfrak{q}_{\Omega}),
	\end{equation*}
	is the required left $(R,P_{\Omega})$-module monomorphism.
\end{proof}

\section{Flat multiple Rota-Baxter modules}{\label{4}}
In this section, we turn our attention to flat multiple Rota-Baxter modules and establish the tensor product construction between two multiple Rota-Baxter modules.

\subsection{Tensor product of multiple Rota-Baxter modules}
We first introduce the definition of tensor products of multiple Rota-Baxter modules.

\begin{defn}
	Let $G$ be an additive abelian group. Suppose that $(_{(R,P_{\Omega})}N,\mathfrak{n}_{\Omega})$ is a left $(R,P_{\Omega})$-module and $(M_{(R,P_{\Omega})},\mathfrak{m}_{\Omega})$ is a right $(R,P_{\Omega})$-module.
	\begin{enumerate}
		\item A map $\varphi:M\times N\longrightarrow G$ is called $(R,P_{\Omega})$-{\bf bilinear} if for all $m,m_1,m_2\in M$, $n,n_1,n_2\in N$ and $r\in R$, $\varphi$ satisfies the following identities:
		\begin{equation*}
			\begin{split}
				\varphi(m_1+m_2,n)&=\varphi(m_1,n)+\varphi(m_2,n),\\
				\varphi(m,n_1+n_2)&=\varphi(m,n_1)+\varphi(m,n_2),\\
				\varphi(mr,n)&=\varphi(m,rn),\\
				\varphi(\mathfrak{m}_{\omega}(m),n)&=\varphi(m,\mathfrak{n}_{\omega}(n)).
			\end{split}
		\end{equation*}
		
		\item The {\bf tensor product} $M\otimes_{(R,P_{\Omega})}N$ is an abelian group equipped with an $(R,P_{\Omega})$-bilinear map
		\begin{equation*}
			\zeta:M\times N\longrightarrow M\otimes_{(R,P_{\Omega})}N,
		\end{equation*}
		satisfying the universal property: for every $(R,P_{\Omega})$-bilinear map $\varphi:M\times N\longrightarrow G$, there exists a unique abelian group homomorphism $\widetilde{\varphi}:M\otimes_{(R,P_{\Omega})}N\longrightarrow G$ such that $\varphi=\widetilde{\varphi}\circ \zeta$. This is represented by the commutative diagram:
		\begin{equation*}
			\xymatrix{
				M\times N \ar[dr]_-{\varphi} \ar[rr]^-{\zeta} &  &  M\otimes_{(R,P_{\Omega})}N \ar@{-->}[dl]^-{\widetilde{\varphi}} \\
				& G.
			}
		\end{equation*}
	\end{enumerate}
\end{defn}

Now we construct the tensor product of multiple Rota-Baxter modules.

\begin{theorem}
	Let $(M_{(R,P_{\Omega})},\mathfrak{m}_{\Omega})$ be a right $(R,P_{\Omega})$-module and $(_{(R,P_{\Omega})}N,\mathfrak{n}_{\Omega})$ be a left $(R,P_{\Omega})$-module. Let $G$ be a free abelian group generated by $M\times N$, and $I$ denote the subgroup of $G$ generated by elements of the form
	\begin{align*}
		&(m_1+m_2,n)-(m_1,n)-(m_2,n),\\
		&(m,n_1+n_2)-(m,n_1)-(m,n_2),\\
		&(mr,n)-(m,rn),\\
		&(\mathfrak{m}_{\omega}(m),n)-(m,\mathfrak{n}_{\omega}(n)),
	\end{align*}
	for $m,m_1,m_2\in M,n,n_1,n_2\in N,r\in R$ and $\omega\in\Omega$.
	Then the quotient group $G/I$ equipped with the bilinear map $\zeta:M\times N\longrightarrow G\longrightarrow G/I$ is $M\otimes_{(R,P_{\Omega})}N$.
\end{theorem}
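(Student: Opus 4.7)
The plan is to follow the standard construction of a tensor product: realize $M\otimes_{(R,P_{\Omega})}N$ as the quotient of the free abelian group on $M\times N$ by exactly the relations that force bilinearity in the sense just defined, then verify the universal property.

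First I would show that the composite map $\zeta\colon M\times N\longrightarrow G\longrightarrow G/I$, sending $(m,n)\longmapsto (m,n)+I$, is $(R,P_{\Omega})$-bilinear. Each of the four defining identities of $(R,P_{\Omega})$-bilinearity corresponds to one of the four families of generators of $I$, so the verification is immediate: for example,
\begin{equation*}
\zeta(\mathfrak{m}_{\omega}(m),n)-\zeta(m,\mathfrak{n}_{\omega}(n))=\bigl((\mathfrak{m}_{\omega}(m),n)-(m,\mathfrak{n}_{\omega}(n))\bigr)+I=0+I,
\end{equation*}
and the other three relations are handled identically.

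Next I would establish the universal property. Let $H$ be any abelian group and $\varphi\colon M\times N\longrightarrow H$ any $(R,P_{\Omega})$-bilinear map. Since $G$ is the free abelian group on $M\times N$, there is a unique group homomorphism $\widehat{\varphi}\colon G\longrightarrow H$ with $\widehat{\varphi}(m,n)=\varphi(m,n)$ for all $(m,n)\in M\times N$. The key step is to check that $\widehat{\varphi}(I)=0$, which reduces to showing that $\widehat{\varphi}$ annihilates each of the four families of generators of $I$; but this is precisely the content of the four bilinearity identities satisfied by $\varphi$. Hence $\widehat{\varphi}$ descends to a group homomorphism $\widetilde{\varphi}\colon G/I\longrightarrow H$ with $\widetilde{\varphi}\circ\zeta=\varphi$.

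Finally I would address uniqueness: any group homomorphism $\widetilde{\varphi}'\colon G/I\longrightarrow H$ satisfying $\widetilde{\varphi}'\circ\zeta=\varphi$ is forced to agree with $\widetilde{\varphi}$ on every generator $(m,n)+I$, and since these generate $G/I$ as an abelian group, $\widetilde{\varphi}'=\widetilde{\varphi}$. This completes the proof. There is no serious obstacle here; the only point requiring a moment of care is matching up the fourth relation $(\mathfrak{m}_{\omega}(m),n)-(m,\mathfrak{n}_{\omega}(n))$ with the corresponding bilinearity axiom, which distinguishes this construction from the ordinary tensor product over $R$ by encoding the compatibility between the right operator $\mathfrak{m}_{\omega}$ on $M$ and the left operator $\mathfrak{n}_{\omega}$ on $N$.
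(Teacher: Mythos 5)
Your proposal is correct and follows essentially the same route as the paper: lift a bilinear map to the free abelian group $G$, observe it kills the generators of $I$ by the four bilinearity identities, descend to $G/I$, and conclude uniqueness from the fact that the classes $(m,n)+I$ generate. You are in fact slightly more careful than the paper in explicitly verifying that $\zeta$ itself is $(R,P_{\Omega})$-bilinear, but the argument is the same.
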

\begin{proof}
	Define a pure tensor $m\otimes_{(R,P_{\Omega})}n:=\zeta(m,n)$, $\forall(m,n)\in M\times N$. We need to show that $G/I$ has the universal property of tensor product.
	
	Suppose that $\varphi:M\times N\longrightarrow G'$ is an $(R,P_{\Omega})$-bilinear map, where $G'$ is an abelian group. There exists a unique abelian group homomorphism $\overline{\varphi}:G\longrightarrow G'$ by the universal property of free abelian group. Since $\varphi$ is bilinear, $\overline{\varphi}$ vanishes on $I$, and then $\overline{\varphi}$ induces
	\begin{equation*}
		\widetilde{\varphi}:G/I\longrightarrow G',\quad \widetilde{\varphi}(m\otimes_{(R,P_{\Omega})}n)=\overline{\varphi}(m,n).
	\end{equation*}
	Apparently $\widetilde{\varphi}$ is a well-defined abelian group homomorphism. And we have
	\begin{equation*}
		\varphi(m,n)=\overline{\varphi}(m,n)=\widetilde{\varphi}(m\otimes_{(R,P_{\Omega})}n)=\widetilde{\varphi}\circ\zeta(m,n).
	\end{equation*}
	For linear extension of pure tensors, we have
	\begin{equation*}
		\widetilde{\varphi}(\sum_{i}^{}m_i\otimes_{(R,P_{\Omega})}n_i)=\sum_{i}^{}\widetilde{\varphi}(m_i\otimes_{(R,P_{\Omega})}n_i)=\sum_{i}^{}\widetilde{\varphi}(\zeta(m_i.n_i))=\sum_{i}^{}\varphi(m_i.n_i).
	\end{equation*}
	Then uniqueness of $\widetilde{\varphi}$ is determined by $\varphi$. Hence $G/I$ is the tensor product $M\otimes_{(R,P_{\Omega})}N$.
\end{proof}
Denote the category of the abelian groups by {\bf Ab}.
\begin{prop}
	With the above notations.
	\begin{enumerate}
		\item Let $(M_{(R,P_{\Omega})},\mathfrak{m}_{\Omega})$ be a right $(R,P_{\Omega})$-module, then there exists an additive functor $F_M :\ _{(R,P_{\Omega})}{\bf Mod} \longrightarrow {\bf Ab}$ defined by
		\begin{equation*}
			F_M(S)=M\otimes_{(R,P_{\Omega})}S,\quad F_M(\theta)=id_M\otimes_{(R,P_{\Omega})}\theta,
		\end{equation*}
		where $(S,\mathfrak{s}_{\Omega}),(T,\mathfrak{t}_{\Omega})$ are left $(R,P_{\Omega})$-modules, and $\theta:(S,\mathfrak{s}_{\Omega})\longrightarrow (T,\mathfrak{t}_{\Omega})$ is a left $(R,P_{\Omega})$-module homomorphism.
		
		\item Let $(_{(R,P_{\Omega})}M,\mathfrak{m}_{\Omega})$ be a left $(R,P_{\Omega})$-module, then there exists an additive  functor $G_M:{\bf Mod}_{(R,P_{\Omega})} \longrightarrow {\bf Ab}$ defined by
		\begin{equation*}
			G_M(S)=S\otimes_{(R,P_{\Omega})}M,\quad G_M(\theta)=\theta\otimes_{(R,P_{\Omega})}id_M,
		\end{equation*}
		where $(S,\mathfrak{s}_{\Omega}),(T,\mathfrak{t}_{\Omega})$ are right $(R,P_{\Omega})$-modules, and $\theta:(S,\mathfrak{s}_{\Omega})\longrightarrow (T,\mathfrak{t}_{\Omega})$ is a right $(R,P_{\Omega})$-module homomorphism.
	\end{enumerate}
\end{prop}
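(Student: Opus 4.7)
The plan is to build each functor out of the universal property of the tensor product constructed in the preceding theorem, and then check the two functoriality axioms and additivity. I will write out part (a) in detail; part (b) is symmetric, with left/right roles swapped.

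Fix a right $(R,P_\Omega)$-module $(M,\mathfrak{m}_\Omega)$ and a left $(R,P_\Omega)$-module homomorphism $\theta:(S,\mathfrak{s}_\Omega)\longrightarrow(T,\mathfrak{t}_\Omega)$. First I would define the map $\varphi_\theta:M\times S\longrightarrow M\otimes_{(R,P_\Omega)}T$ by $\varphi_\theta(m,s):=m\otimes_{(R,P_\Omega)}\theta(s)$ and verify the four bilinearity conditions. The additive conditions in each slot and the relation $\varphi_\theta(mr,s)=\varphi_\theta(m,rs)$ are immediate from the corresponding relations in $M\otimes_{(R,P_\Omega)}T$ together with the $R$-linearity of $\theta$. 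The key verification is the operator-compatibility relation
\[
\varphi_\theta(\mathfrak{m}_\omega(m),s)=\mathfrak{m}_\omega(m)\otimes_{(R,P_\Omega)}\theta(s)
=m\otimes_{(R,P_\Omega)}\mathfrak{t}_\omega(\theta(s))
=m\otimes_{(R,P_\Omega)}\theta(\mathfrak{s}_\omega(s))=\varphi_\theta(m,\mathfrak{s}_\omega(s)),
\]
where the second equality is the defining relation $(\mathfrak{m}_\omega(m))\otimes n=m\otimes \mathfrak{n}_\omega(n)$ applied in $M\otimes_{(R,P_\Omega)}T$, and the third uses the fact that $\theta$ intertwines $\mathfrak{s}_\omega$ and $\mathfrak{t}_\omega$. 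By the universal property of $M\otimes_{(R,P_\Omega)}S$, this yields a unique abelian-group homomorphism $\widetilde{\varphi_\theta}:M\otimes_{(R,P_\Omega)}S\longrightarrow M\otimes_{(R,P_\Omega)}T$; I would then set $F_M(\theta):=\widetilde{\varphi_\theta}$, which on pure tensors acts as $m\otimes s\longmapsto m\otimes\theta(s)$, consistent with the notation $\id_M\otimes_{(R,P_\Omega)}\theta$.

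Next I would check the two functoriality axioms. For the identity, $F_M(\id_S)$ agrees with $\id_{M\otimes_{(R,P_\Omega)}S}$ on pure tensors, hence on all of $M\otimes_{(R,P_\Omega)}S$ by the uniqueness clause of the universal property. For composition, given another homomorphism $\psi:(T,\mathfrak{t}_\Omega)\longrightarrow(U,\mathfrak{u}_\Omega)$, both $F_M(\psi\circ\theta)$ and $F_M(\psi)\circ F_M(\theta)$ send $m\otimes s$ to $m\otimes(\psi\circ\theta)(s)$, so they coincide by uniqueness. Additivity $F_M(\theta_1+\theta_2)=F_M(\theta_1)+F_M(\theta_2)$ follows the same way: both sides send $m\otimes s$ to $m\otimes\theta_1(s)+m\otimes\theta_2(s)$ and agree by uniqueness, once one notes that $F_M(\theta_1)+F_M(\theta_2)$ is itself an abelian-group homomorphism (since $\mathrm{Hom}_{(R,P_\Omega)}$ groups are abelian by Proposition~\ref{D}, but here I only need the ambient abelian-group structure on $\mathrm{Hom}_{\mathbb{Z}}$).

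Part (b) proceeds identically after transposing sides: given $\theta:(S,\mathfrak{s}_\Omega)\longrightarrow(T,\mathfrak{t}_\Omega)$ of right $(R,P_\Omega)$-modules, I would define $\psi_\theta(s,m):=\theta(s)\otimes_{(R,P_\Omega)}m$ and check the analogous four bilinearity relations; the only nontrivial one is the operator relation, which again reduces to $\theta\circ\mathfrak{s}_\omega=\mathfrak{t}_\omega\circ\theta$ combined with the defining relation $\mathfrak{t}_\omega(t)\otimes m=t\otimes\mathfrak{m}_\omega(m)$ in $T\otimes_{(R,P_\Omega)}M$. The universal property then produces $G_M(\theta)$, and functoriality plus additivity are verified exactly as above. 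The only real content in the whole proof is the single bilinearity check involving $\mathfrak{m}_\omega$ and $\mathfrak{s}_\omega$; everything else is formal manipulation with the universal property.
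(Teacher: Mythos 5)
Your proposal is correct and follows essentially the same route as the paper: define $F_M(\theta)$ on pure tensors as $m\otimes s\mapsto m\otimes\theta(s)$ and verify functoriality and additivity there. In fact you are more careful than the paper, which takes the well-definedness of $\id_M\otimes_{(R,P_{\Omega})}\theta$ for granted, whereas you justify it via the universal property by checking that $(m,s)\mapsto m\otimes\theta(s)$ is $(R,P_{\Omega})$-bilinear --- in particular the operator-compatibility relation, which is exactly the point where the hypothesis $\theta\circ\mathfrak{s}_{\omega}=\mathfrak{t}_{\omega}\circ\theta$ is needed.
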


\begin{proof}
	(a) Suppose that $(S,\mathfrak{s}_{\Omega}),(H,\mathfrak{h}_{\Omega})$ are left $(R,P_{\Omega})$-modules, and $\theta':(S,\mathfrak{s}_{\Omega})\longrightarrow(H,\mathfrak{h}_{\Omega})$ is a left $(R,P_{\Omega})$-module homomorphism. Then
	\begin{equation*}
		F_M(\theta\circ \theta')=id_M\otimes_{(R,P_{\Omega})}(\theta\circ \theta')=(id_M\otimes_{(R,P_{\Omega})}\theta)\circ(id_M\otimes_{(R,P_{\Omega})}\theta')=F_M(\theta)\circ F_M(\theta').
	\end{equation*}
	Thus $F_M$ is a functor with $F_M(id_S)=id_M\otimes_{(R,P_{\Omega})}id_S$. We need to verify
	\begin{align*}
		F_M(\theta+\iota)=F_M(\theta)+F_M(\iota),
	\end{align*}
	where $\theta,\iota:S\longrightarrow T$ are left $(R,P_{\Omega})$-module homomorphisms. For any $m\otimes_{(R,P_{\Omega})}s\in M\otimes_{(R,P_{\Omega})}S$, we have
	\begin{equation*}
		\begin{split}
			F_M(\theta+\iota)(m\otimes_{(R,P_{\Omega})}s)
			&=id_{M\otimes_{(R,P_{\Omega})}S}(\theta+\iota)(m\otimes_{(R,P_{\Omega})}s)\\
			&=m\otimes_{(R,P_{\Omega})}((\theta+\iota)s)\\
			&=m\otimes_{(R,P_{\Omega})}(\theta(s)+\iota(s))\\	
			&=m\otimes_{(R,P_{\Omega})}\theta(s)+
			m\otimes_{(R,P_{\Omega})}\iota(s)\\
			&=(F_M(\theta)+F_M(\iota))(m\otimes_{(R,P_{\Omega})}s),
		\end{split}
	\end{equation*}
	as required.
	
	(b)The verification proceeds similarly to Item (a).
\end{proof}

\begin{prop}{\label{I}}
	Let $(R,P_{\Omega})$ and $(R',P_{\Omega}')$ be multiple Rota-Baxter algebras of pair weight $(\lambda_{\Omega},\lambda_{\Omega})$.
	\begin{enumerate}
		\item If $(_{(R,P_{\Omega})}S,\mathfrak{s}_{\Omega}^R)$ is a left $(R,P_{\Omega})$-module and $(_{(R',P_{\Omega}')}M_{(R,P_{\Omega})},\mathfrak{m}_{\Omega}^{R'},\mathfrak{m}_{\Omega}^R)$ is an $(R',P_{\Omega}')$-$(R,P_{\Omega})$-bimodule, then the tensor product $(M\otimes_{(R,P_{\Omega})}S,\mathfrak{q}_{\Omega})$ is a left $(R',P_{\Omega}')$-module defined by
		\begin{equation*}
			\begin{split}
				&r'(m\otimes_{(R,P_{\Omega})}s):=(r'm)\otimes_{(R,P_{\Omega})}s,\\
				&\mathfrak{q}_{\omega}(m\otimes_{(R,P_{\Omega})}s):=\mathfrak{m}_{\omega}^{R'}(m)\otimes_{(R,P_{\Omega})} s,\quad
			\end{split}
		\end{equation*}
		where $r'\in R'$, $m\in M$, $s\in S$ and $\omega\in\Omega$.
		\item If $(_{(R,P_{\Omega})}S_{(R',P_{\Omega}')},\mathfrak{s}_{\Omega}^R,\mathfrak{s}_{\Omega}^{R'})$ is an $(R,P_{\Omega})$-$(R',P_{\Omega}')$-bimodule and $(M_{(R,P_{\Omega})},\mathfrak{m}_{\Omega}^R)$ is a right $(R,P_{\Omega})$-module, then the tensor product $(M\otimes_{(R,P_{\Omega})}S,\mathfrak{q}_{\Omega})$ is a right $(R',P_{\Omega}')$-module defined by
		\begin{equation*}
			\begin{split}
				&(m\otimes_{(R,P_{\Omega})}s)r':=m\otimes_{(R,P_{\Omega})}(sr'),\\
				&\mathfrak{q}_{\omega}(m\otimes_{(R,P_{\Omega})}s):=m\otimes_{(R,P_{\Omega})}\mathfrak{s}_{\omega}^{R'}(s),
			\end{split}
		\end{equation*}
		where $r'\in R'$, $m\in M$, $s\in S$ and $\omega\in\Omega$.
	\end{enumerate}
\end{prop}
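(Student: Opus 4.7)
The plan is to verify in turn the three structural claims in part (a): the $R'$-module action, the family $\mathfrak{q}_\Omega$ of linear operators, and the multiple Rota-Baxter compatibility between them. The guiding principle is the universal property of $\otimes_{(R,P_\Omega)}$: to define a map out of $M\otimes_{(R,P_\Omega)}S$ it suffices to build an $(R,P_\Omega)$-bilinear map on $M\times S$ and then appeal to uniqueness.

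For the $R'$-action, I would fix $r'\in R'$ and consider the assignment $(m,s)\mapsto (r'm)\otimes_{(R,P_\Omega)} s$. Biadditivity is immediate; the $R$-balance $(r'm)r\otimes s=(r'm)\otimes rs$ comes from the bimodule identity $(r'm)r=r'(mr)$; and the operator balance $r'\mathfrak{m}_\omega^R(m)\otimes s=(r'm)\otimes\mathfrak{s}_\omega^R(s)$ follows by first rewriting $r'\mathfrak{m}_\omega^R(m)=\mathfrak{m}_\omega^R(r'm)$ via the bimodule compatibility and then invoking the defining tensor relation. The $R'$-module axioms for $(r_1'+r_2')(m\otimes s)$, $(r_1'r_2')(m\otimes s)$, and $\mathbf{1}_{R'}(m\otimes s)=m\otimes s$ then reduce to identities on pure tensors that are immediate from $M$ being a left $R'$-module.

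For each $\omega\in\Omega$, I would apply the same template to $(m,s)\mapsto \mathfrak{m}_\omega^{R'}(m)\otimes_{(R,P_\Omega)} s$: biadditivity is clear, the $R$-balance uses $\mathfrak{m}_\omega^{R'}(mr)=\mathfrak{m}_\omega^{R'}(m)r$, and the operator balance requires the two-sided commutativity $\mathfrak{m}_\omega^{R'}\circ \mathfrak{m}_\alpha^R=\mathfrak{m}_\alpha^R\circ\mathfrak{m}_\omega^{R'}$ followed once more by the tensor relation. The universal property then yields well-defined $\mathbf{k}$-linear operators $\mathfrak{q}_\omega$ on $M\otimes_{(R,P_\Omega)}S$.

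To verify Eq.~(\ref{a}) for $\mathfrak{q}_\Omega$, I would evaluate both sides on a pure tensor $m\otimes s$. Expanding the definitions of the $R'$-action and of $\mathfrak{q}_\omega$ turns the desired equality into
\[ \bigl(P'_\alpha(r')\mathfrak{m}_\beta^{R'}(m)-\mathfrak{m}_\alpha^{R'}(r'\mathfrak{m}_\beta^{R'}(m))-\mathfrak{m}_\beta^{R'}(P'_\alpha(r')m)-\lambda_\beta\mathfrak{m}_\alpha^{R'}(r'm)-\lambda_\alpha\mathfrak{m}_\beta^{R'}(r'm)\bigr)\otimes s=0, \]
which holds since the parenthesized element already vanishes in $M$ by the $(R',P'_\Omega)$-module structure on $(M,\mathfrak{m}_\Omega^{R'})$. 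Part (b) is then proved by the mirror argument, exchanging the roles of left and right and using the corresponding bimodule compatibilities. The main obstacle, as above, is the operator-balance step in the well-definedness of $\mathfrak{q}_\omega$: it is precisely the operator-commutativity axiom built into the definition of bimodule that permits the descent to the tensor product, so the proposition essentially confirms that the bimodule axioms have been stated at the right level of generality.
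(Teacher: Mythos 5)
Your proposal is correct, and its core verification---expanding $P'_{\alpha}(r')\mathfrak{q}_{\beta}(m\otimes s)$ on pure tensors and reducing the identity to the left $(R',P_{\Omega}')$-module axiom for $(M,\mathfrak{m}_{\Omega}^{R'})$---is exactly the computation the paper carries out. In addition you check that the $R'$-action and the operators $\mathfrak{q}_{\omega}$ actually descend to $M\otimes_{(R,P_{\Omega})}S$ by verifying $(R,P_{\Omega})$-bilinearity from the bimodule compatibilities (in particular the operator-commutation axiom for the balance relation $(\mathfrak{m}_{\omega}(m),s)\sim(m,\mathfrak{s}_{\omega}(s))$); this well-definedness step is silently omitted in the paper's proof, so your argument is the same route made more complete rather than a different one.
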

\begin{proof}
	(a) We need to show that $(M\otimes_{(R,P_{\Omega})}S,\mathfrak{q}_{\Omega})$ is a left $(R',P_{\Omega}')$-module. For any $ r'\in R',s\in S,m\in M$ and $\alpha,\beta\in\Omega$, we have
	\begin{equation*}
		\begin{split}
			P'_{\alpha}(r')\mathfrak{q}_{\beta}(m\otimes_{(R,P_{\Omega})}s)=&P'_{\alpha}(r')(\mathfrak{m}_{\beta}^{R'}(m)\otimes_{(R,P_{\Omega})}s)\\
			=&(P'_{\alpha}(r')\mathfrak{m}_{\beta}^{R'}(m))\otimes_{(R,P_{\Omega})}s\\
			=&\mathfrak{m}_{\alpha}^{R'}(r'\mathfrak{m}_{\beta}^{R'}(m))\otimes_{(R,P_{\Omega})}s+\mathfrak{m}_{\beta}^{R'}(P'_{\alpha}(r')m)\otimes_{(R,P_{\Omega})}s\\
			&+\lambda_{\alpha}\mathfrak{m}_{\beta}^{R'}(r'm)\otimes_{(R,P_{\Omega})}s+\lambda_{\beta}\mathfrak{m}_{\alpha}^{R'}(r'm)\otimes_{(R,P_{\Omega})}s\\
			=&\mathfrak{q}_{\alpha}(r'\mathfrak{m}_{\beta}^{R'}(m)\otimes_{(R,P_{\Omega})}s)+\mathfrak{q}_{\beta}(P'_{\alpha}(r')m\otimes_{(R,P_{\Omega})}s)\\
			&+\lambda_{\alpha}\mathfrak{q}_{\beta}(r'm\otimes_{(R,P_{\Omega})}s)+\lambda_{\beta}\mathfrak{q}_{\alpha}(r'm\otimes_{(R,P_{\Omega})}s)\\
			=&\mathfrak{q}_{\alpha}(r'(\mathfrak{m}_{\beta}^{R'}(m)\otimes_{(R,P_{\Omega})}s))+\mathfrak{q}_{\beta}(P'_{\alpha}(r')(m\otimes_{(R,P_{\Omega})}s))\\
			&+\lambda_{\alpha}\mathfrak{q}_{\beta}(r'm\otimes_{(R,P_{\Omega})}s)+\lambda_{\beta}\mathfrak{q}_{\alpha}(r'm\otimes_{(R,P_{\Omega})}s)\\
			=&\mathfrak{q}_{\alpha}(r'(\mathfrak{q}_{\beta}(m\otimes_{(R,P_{\Omega})}s)))+\mathfrak{q}_{\beta}(P'_{\alpha}(r')(m\otimes_{(R,P_{\Omega})}s))\\
			&+\lambda_{\alpha}\mathfrak{q}_{\beta}(r'm\otimes_{(R,P_{\Omega})}s)+\lambda_{\beta}\mathfrak{q}_{\alpha}(r'm\otimes_{(R,P_{\Omega})}s),
		\end{split}
	\end{equation*}
	as required.
	
	(b) The verification proceeds similarly to Item (a).
\end{proof}

\begin{theorem}
	Let $(R,P_{\Omega})$ and $(R',P_{\Omega}')$ be multiple Rota-Baxter algebras of pair weight $(\lambda_{\Omega},\lambda_{\Omega})$. We suppose that $(M_{(R,P_{\Omega})},\mathfrak{m}_{\Omega}^R)$ is a right $(R,P_{\Omega})$-module, $(_{(R,P_{\Omega})}S_{(R',P_{\Omega}')},\mathfrak{s}_{\Omega}^R,\mathfrak{s}_{\Omega}^{R'})$ is an $(R,P_{\Omega})$-$(R',P_{\Omega}')$-bimodule and $(T_{(R',P_{\Omega}')},\mathfrak{t}_{\Omega}^{R'})$ is a right $(R',P_{\Omega}')$-module. Then
	\begin{equation*}
		\mathrm{Hom}_{(R',P_{\Omega}')}(M\otimes_{(R,P_{\Omega})}S,T)\cong\mathrm{Hom}_{(R,P_{\Omega})}(M,\mathrm{Hom}_{(R',P_{\Omega}')}(S,T)).
	\end{equation*}
\end{theorem}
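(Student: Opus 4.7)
The plan is to establish the isomorphism by constructing the classical adjunction maps and then verifying that all the multiple Rota-Baxter operator compatibilities are preserved. Before starting, I would note that by Proposition~\ref{I}(b), the tensor product $(M\otimes_{(R,P_{\Omega})}S,\mathfrak{q}_{\Omega})$ inherits a right $(R',P_{\Omega}')$-module structure, so the left-hand Hom set makes sense. On the other side, since $S$ is an $(R,P_{\Omega})$-$(R',P_{\Omega}')$-bimodule and $T$ is a right $(R',P_{\Omega}')$-module, an analogue of the Hom-functor proposition gives $\mathrm{Hom}_{(R',P_{\Omega}')}(S,T)$ the structure of a right $(R,P_{\Omega})$-module via $(f\cdot r)(s):=f(\mathfrak{s}_{\Omega}^R \text{-precompose with }rs)$ and $\mathfrak{q}_{\omega}(f)(s):=f(\mathfrak{s}_{\omega}^R(s))$; thus the right-hand Hom set also makes sense.

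\textbf{Step 1: Constructing the forward map.} Define
\[
\Phi:\mathrm{Hom}_{(R',P_{\Omega}')}(M\otimes_{(R,P_{\Omega})}S,\,T)\longrightarrow\mathrm{Hom}_{(R,P_{\Omega})}\bigl(M,\mathrm{Hom}_{(R',P_{\Omega}')}(S,T)\bigr)
\]
by $\Phi(f)(m)(s):=f(m\otimes_{(R,P_{\Omega})}s)$. I would first verify that $\Phi(f)(m)$ is a right $(R',P_{\Omega}')$-module homomorphism from $S$ to $T$: additivity and the $R'$-linearity follow since $f$ is a right $R'$-module homomorphism together with the definition of the right $R'$-action on the tensor product, while the commutation $\Phi(f)(m)\circ\mathfrak{s}_{\omega}^{R'}=\mathfrak{t}_{\omega}^{R'}\circ\Phi(f)(m)$ follows from $f\circ \mathfrak{q}_{\omega}=\mathfrak{t}_{\omega}^{R'}\circ f$ combined with $\mathfrak{q}_{\omega}(m\otimes s)=m\otimes\mathfrak{s}_{\omega}^{R'}(s)$. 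Then I would check that $\Phi(f)$ itself is a right $(R,P_{\Omega})$-module homomorphism, using the bilinear relations $(mr)\otimes s=m\otimes(rs)$ and $\mathfrak{m}_{\omega}^R(m)\otimes s=m\otimes\mathfrak{s}_{\omega}^R(s)$ that cut out the tensor product.

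\textbf{Step 2: Constructing the inverse via the universal property.} Given $g$ in the right-hand Hom set, define an $(R,P_{\Omega})$-bilinear map $\beta_g:M\times S\to T$ by $\beta_g(m,s):=g(m)(s)$. I would verify the four bilinearity axioms: biadditivity and $\beta_g(mr,s)=\beta_g(m,rs)$ come from $g$ being $R$-linear together with the right $(R,P_{\Omega})$-action on $\mathrm{Hom}_{(R',P_{\Omega}')}(S,T)$, and $\beta_g(\mathfrak{m}_{\omega}^R(m),s)=\beta_g(m,\mathfrak{s}_{\omega}^R(s))$ is the statement $g\circ\mathfrak{m}_{\omega}^R=\mathfrak{q}_{\omega}\circ g$ unpacked via the definition of $\mathfrak{q}_{\omega}$ on $\mathrm{Hom}_{(R',P_{\Omega}')}(S,T)$. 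The universal property of $M\otimes_{(R,P_{\Omega})}S$ then produces a unique abelian group homomorphism $\widetilde{\beta_g}:M\otimes_{(R,P_{\Omega})}S\to T$, and we set $\Psi(g):=\widetilde{\beta_g}$. I would then check that $\Psi(g)$ is right $(R',P_{\Omega}')$-linear and commutes with $\mathfrak{q}_{\Omega}$, reducing each verification on pure tensors to the fact that $g(m)\in\mathrm{Hom}_{(R',P_{\Omega}')}(S,T)$.

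\textbf{Step 3: Inverse and additivity.} On pure tensors one has $\Psi(\Phi(f))(m\otimes s)=\Phi(f)(m)(s)=f(m\otimes s)$, which extends by additivity to all of $M\otimes_{(R,P_{\Omega})}S$; conversely $\Phi(\Psi(g))(m)(s)=\Psi(g)(m\otimes s)=g(m)(s)$, so $\Phi\circ\Psi=\mathrm{id}$ and $\Psi\circ\Phi=\mathrm{id}$. Both $\Phi$ and $\Psi$ are clearly abelian group homomorphisms from the pointwise definitions, yielding the claimed isomorphism.

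\textbf{Anticipated obstacle.} The only genuinely delicate point is the bookkeeping in Step~2: one must confirm that $\beta_g$ is $(R,P_{\Omega})$-bilinear in the exact sense required to factor through $M\otimes_{(R,P_{\Omega})}S$, which forces the Rota-Baxter-type compatibility $\beta_g(\mathfrak{m}_{\omega}^R(m),s)=\beta_g(m,\mathfrak{s}_{\omega}^R(s))$. This is where the correct choice of $(R,P_{\Omega})$-module structure on $\mathrm{Hom}_{(R',P_{\Omega}')}(S,T)$ is essential; with the operator $\mathfrak{q}_{\omega}(g)(s)=g(\mathfrak{s}_{\omega}^R(s))$, the condition $g\circ\mathfrak{m}_{\omega}^R=\mathfrak{q}_{\omega}\circ g$ unpacks to exactly the identity we need, so the tensor relation $\mathfrak{m}_{\omega}^R(m)\otimes s=m\otimes\mathfrak{s}_{\omega}^R(s)$ matches the Hom-side operator perfectly. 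All remaining verifications are direct computations on pure tensors, entirely parallel to the classical tensor-Hom adjunction.
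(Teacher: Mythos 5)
Your proposal is correct and follows essentially the same route as the paper: define the two adjunction maps $f\mapsto(m\mapsto(s\mapsto f(m\otimes s)))$ and $g\mapsto(m\otimes s\mapsto g(m)(s))$ and check they are mutually inverse. In fact you are more careful than the paper's own (rather terse) proof, since you justify the well-definedness of the inverse map via the universal property of $M\otimes_{(R,P_{\Omega})}S$ and verify the $(R,P_{\Omega})$-bilinearity condition $\beta_g(\mathfrak{m}_{\omega}^R(m),s)=\beta_g(m,\mathfrak{s}_{\omega}^R(s))$ explicitly, a step the paper omits.
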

\begin{proof}
	For $m\otimes_{(R,P_{\Omega})}s\in M\otimes_{(R,P_{\Omega})}S$, we define two abelian group homomorphisms
	\begin{equation*}
		\theta : \mathrm{Hom}_{(R',P_{\Omega}')}(M\otimes_{(R,P_{\Omega})}S,T) \longrightarrow \mathrm{Hom}_{(R,P_{\Omega})}(M,\mathrm{Hom}_{(R',P_{\Omega}')}(S,T)),\ \theta(f)(m)(s)=f(m\otimes_{(R,P_{\Omega})}s),
	\end{equation*}
	and
	\begin{equation*}
		\theta' : \mathrm{Hom}_{(R,P_{\Omega})}(M,\mathrm{Hom}_{(R',P_{\Omega}')}(S,T)) \longrightarrow \mathrm{Hom}_{(R',P_{\Omega}')}(M\otimes_{(R,P_{\Omega})}S,T),\ \theta'(g)(m\otimes_{(R,P_{\Omega})}s)=g(m)(s),
	\end{equation*}
	where $f\in \mathrm{Hom}_{(R',P_{\Omega}')}(M\otimes_{(R,P_{\Omega})}S,T)$, and $g\in \mathrm{Hom}_{(R,P_{\Omega})}(M,\mathrm{Hom}_{(R',P_{\Omega}')}(S,T))$.
	Since
	\begin{equation*}
		\theta\circ\theta'(g)(m\otimes_{(R,P_{\Omega})}s)=\theta(g(m)(s))=g(m\otimes_{(R,P_{\Omega})}s),
	\end{equation*}
	\begin{equation*}
		\theta'\circ\theta(f)(m)(s)=\theta'(f(m\otimes_{(R,P_{\Omega})}s))=f(m)(s).
	\end{equation*}
	Hence $\theta,\theta'$ are mutually inverse isomorphisms, as required.
\end{proof}
The result demonstrates that $-\otimes_{(R,P_{\Omega})}S$ and $\mathrm{Hom}_{(R',P_{\Omega}')}(S,-)$ are adjoint functors.

\subsection{Flat multiple Rota-Baxter module}
Following the conventional approach, the right exactness of the multiple Rota-Baxter tensor product can be routinely proved. To explore its exactness, we give the notation of flatness for multiple Rota-Baxter modules.

\begin{defn}
	A right $(R,P_{\Omega})$-module $(M,\mathfrak{m}_{\Omega})$ is {\bf flat} if
	\begin{equation*}
		M \otimes_{(R,P_\Omega)} - :  _{(R,P_{\Omega})}{\bf Mod}\longrightarrow \mathbf{Ab}
	\end{equation*}
	is an exact tensor functor. More precisely, for every short exact sequence in the category of left $(R,P_\Omega)$-modules
	\begin{equation*}
		\xymatrix{0 \ar[r] & (S,\mathfrak{s}_{\Omega}) \ar[r]^-{i} & (N,\mathfrak{n}_{\Omega}) \ar[r]^-{j} & (T,\mathfrak{t}_{\Omega}) \ar[r] & 0},
	\end{equation*}
	the induced short sequence
	\begin{equation*}
		\xymatrix{0 \ar[r] & M\otimes_{(R,P_{\Omega})}S \ar[r]^-{id_M\otimes i} & M\otimes_{(R,P_{\Omega})}N \ar[r]^-{id_M\otimes j} & M\otimes_{(R,P_{\Omega})}T \ar[r] & 0}
	\end{equation*}
	is also exact.
\end{defn}

Thus a right $(R,P_{\Omega})$-module $(M,\mathfrak{m}_{\Omega})$ is flat if and only if $M\otimes -$ preserves injection.

\begin{theorem}{\label{J}}
	Let $(M,\mathfrak{m}_{\Omega})$ be a flat right $(R,P_{\Omega})$-module, then there exists a natural right $R$-module isomorphism $M\otimes_{(R,P_{\Omega})}R\cong M$.
\end{theorem}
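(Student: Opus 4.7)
Here is my plan. First I would define the canonical right $R$-module homomorphism $\psi: M \to M \otimes_{(R,P_\Omega)} R$ by $\psi(m) := m \otimes_{(R,P_\Omega)} 1_R$. That $\psi$ is right $R$-linear follows from the $R$-balance relation in the tensor product together with the right $R$-action on $M \otimes_{(R,P_\Omega)} R$ from Proposition~\ref{I}: $\psi(mr) = mr \otimes 1_R = m \otimes r = (m \otimes 1_R)r = \psi(m)r$. Surjectivity is immediate, since every pure tensor can be rewritten as $m \otimes r = mr \otimes 1_R = \psi(mr)$ and arbitrary elements are finite sums of such.

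For injectivity I would invoke flatness. The natural approach is to realize $\psi$ as a map induced by a short exact sequence whose outer terms are understood after tensoring. The ring of multiple Rota-Baxter operators $R_{MRB}\langle Q_\Omega\rangle$ from Section~\ref{3} carries a natural left $(R,P_\Omega)$-module structure (Proposition~\ref{F}), and the evaluation $a \mapsto a \cdot 1_R$ yields a surjection of left $(R,P_\Omega)$-modules $R_{MRB}\langle Q_\Omega\rangle \twoheadrightarrow R$ with some kernel $J$, giving
\begin{equation*}
0 \longrightarrow J \longrightarrow R_{MRB}\langle Q_\Omega\rangle \longrightarrow R \longrightarrow 0.
\end{equation*}
The key identification I would establish is $M \otimes_{(R,P_\Omega)} R_{MRB}\langle Q_\Omega\rangle \cong M$: promote $M$ to a right $R_{MRB}\langle Q_\Omega\rangle$-module via $m \cdot Q_\omega := \mathfrak{m}_\omega(m)$ (checking that the defining relation $Q_\alpha r Q_\beta = P_\alpha(r)Q_\beta - Q_\beta P_\alpha(r) - \lambda_\beta Q_\alpha r - \lambda_\alpha Q_\beta r$ is compatible with Eq.~(\ref{b})), and then observe that the $(R,P_\Omega)$-balance conditions on the tensor product coincide with $R_{MRB}\langle Q_\Omega\rangle$-balance, so the tensor product collapses to $M$.

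Applying the exact functor $M \otimes_{(R,P_\Omega)} -$ (using flatness) yields
\begin{equation*}
0 \longrightarrow M \otimes_{(R,P_\Omega)} J \longrightarrow M \longrightarrow M \otimes_{(R,P_\Omega)} R \longrightarrow 0,
\end{equation*}
which should identify $\psi$ with the right-hand surjection and give the desired isomorphism after verifying the image of $M \otimes_{(R,P_\Omega)} J$ in $M$ is trivial. An equivalent cleaner route, which I would try first, is to define a candidate inverse $\phi: M \otimes_{(R,P_\Omega)} R \to M$ by $\phi(m \otimes r) = mr$ and argue that well-definedness with respect to the Rota-Baxter balance $\mathfrak{m}_\omega(m)\otimes r = m \otimes P_\omega(r)$ — i.e., the identity $\mathfrak{m}_\omega(m)r = mP_\omega(r)$ in $M$ — is forced by flatness, after which $\phi \circ \psi = \mathrm{id}_M$ and $\psi \circ \phi = \mathrm{id}$ on pure tensors are routine.

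The main obstacle is precisely the last point: extracting the balance identity $\mathfrak{m}_\omega(m)r = mP_\omega(r)$ (equivalently, showing $M \otimes_{(R,P_\Omega)} J \hookrightarrow M$ lands in $0$) purely from the hypothesis that $M \otimes_{(R,P_\Omega)} -$ preserves injections. This is the nontrivial structural consequence of flatness in this setting, and the care needed to track the generators of $J$ and their right action on $M$ via $m \cdot (Q_\omega r - P_\omega(r)) = \mathfrak{m}_\omega(m)r - mP_\omega(r)$ is where the bulk of the argument will lie.
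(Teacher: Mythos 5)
Your proposal is assembled from the same ingredients as the paper's proof --- the operator ring $R_{MRB}\langle Q_{\Omega}\rangle$, the identification $M\otimes_{(R,P_{\Omega})}R_{MRB}\langle Q_{\Omega}\rangle\cong M$, and an application of flatness to a map connecting $R$ with $R_{MRB}\langle Q_{\Omega}\rangle$ --- but it attacks from the opposite direction and stops short at the one step that carries all the content. The paper applies flatness to the \emph{inclusion} $\varphi:R\hookrightarrow R_{MRB}\langle Q_{\Omega}\rangle$, viewed as a monomorphism of left $(R,P_{\Omega})$-modules, to conclude that $\mathrm{id}_M\otimes\varphi:M\otimes_{(R,P_{\Omega})}R\to M\otimes_{(R,P_{\Omega})}R_{MRB}\langle Q_{\Omega}\rangle\cong M$ (which on pure tensors is precisely your candidate inverse $\phi:m\otimes r\mapsto mr$) is injective, then checks surjectivity and right $R$-linearity directly. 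You instead try to prove injectivity of the opposite map $\psi:m\mapsto m\otimes \mathbf{1}_R$ via the kernel $J$ of the surjection $R_{MRB}\langle Q_{\Omega}\rangle\twoheadrightarrow R$.

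The gap is that you explicitly defer, and give no argument for, the decisive step: showing that the image of $M\otimes_{(R,P_{\Omega})}J$ in $M$ vanishes, equivalently that the balance identity $\mathfrak{m}_{\omega}(m)r=mP_{\omega}(r)$ holds in $M$. Flatness does not obviously deliver this. Flatness only gives injectivity of $M\otimes_{(R,P_{\Omega})}J\to M$, while right-exactness gives $M\otimes_{(R,P_{\Omega})}R\cong M/\mathrm{im}\bigl(M\otimes_{(R,P_{\Omega})}J\bigr)$; for $\psi$ to be injective you would need $\mathrm{im}\bigl(M\otimes_{(R,P_{\Omega})}J\bigr)=0$, hence $M\otimes_{(R,P_{\Omega})}J=0$. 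But $J$ contains elements such as $Q_{\omega}-P_{\omega}(\mathbf{1}_R)$, whose image under $m\otimes j\mapsto m\cdot j$ is $\mathfrak{m}_{\omega}(m)-mP_{\omega}(\mathbf{1}_R)$, and nothing in the flatness hypothesis forces this to vanish for a general flat $(M,\mathfrak{m}_{\Omega})$. So the route you propose requires a strong structural identity on $M$ that does not follow from preservation of injections, and as written the proposal does not prove the theorem. To complete an argument along the paper's lines you should instead tensor $M$ against the monomorphism $R\to R_{MRB}\langle Q_{\Omega}\rangle$ --- though note that both your approach and the paper's hinge on this inclusion intertwining $P_{\omega}$ with left multiplication by $Q_{\omega}$, which is exactly the point requiring the most care.
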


\begin{proof}
	Let $\varphi: (R,P_{\Omega})\longrightarrow (R_{MRB}\left\langle Q_{\Omega}\right\rangle,\overline{P}_{\Omega})$ be an injective left $(R,P_{\Omega})$-module homomorphism, induced by the inclusion $R\longrightarrow R_{MRB}\left\langle Q_{\Omega}\right\rangle $. By the flatness of $(M,\mathfrak{m}_{\Omega})$,   $id_M\otimes_{(R,P_{\Omega})}\varphi: M\otimes_{(R,P_{\Omega})}R\longrightarrow M\otimes_{(R,P_{\Omega})}R_{MRB}\left\langle Q_{\Omega}\right\rangle$ is also an injective abelian group homomorphism. For any pure tensor $m\otimes_{R_{MRB}\left\langle Q_{\Omega}\right\rangle}x\in M\otimes_{R_{MRB}\left\langle Q_{\Omega}\right\rangle}R_{MRB}\left\langle Q_{\Omega}\right\rangle$ and $x\in {R_{MRB}\left\langle Q_{\Omega}\right\rangle}$, we have
	\begin{equation*}
		(id_M\otimes_{(R,P_{\Omega})}\varphi)(mx\otimes_{(R,P_{\Omega})}{\bf 1}_R)=mx\otimes_{(R,P_{\Omega})}{\bf 1}_{R_{MRB}\left\langle Q_{\Omega}\right\rangle}=m\otimes_{R_{MRB}\left\langle Q_{\Omega}\right\rangle}x.
	\end{equation*}
	Hence $id_M\otimes_{(R,P_{\Omega})}\varphi$ is surjective and is consequently an abelian group isomorphism.
	
	By Proposition \ref{I}, $M\otimes_{(R,P_{\Omega})}R$ and $M\otimes_{(R,P_{\Omega})}R_{MRB}\left\langle Q_{\Omega}\right\rangle$ are also right $R$-modules. For $m\otimes r_1\in M\otimes_{(R,P_{\Omega})}R$, $r_2\in R$, we have
	\begin{equation*}
		\begin{split}
			(id_M\otimes_{(R,P_{\Omega})}\varphi)((m\otimes r_1)r_2)=&(id_M\otimes_{(R,P_{\Omega})}\varphi)(m\otimes r_1r_2)=m\otimes_{(R,P_{\Omega})}\varphi(r_1r_2)\\
			=&m\otimes_{(R,P_{\Omega})}\varphi(r_1)r_2=((id_M\otimes_{(R,P_{\Omega})}\varphi)(m\otimes r_1))r_2.
		\end{split}
	\end{equation*}
	Then $id_M\otimes_{(R,P_{\Omega})}\varphi$ is a right $R$-module isomorphism.
	
	By Proposition \ref{F}, $(M,\mathfrak{m}_{\Omega})$ and $(R_{MRB}\left\langle Q_{\Omega}\right\rangle,\overline{P}_{\Omega})$ can be regard as $R_{MRB}\left\langle Q_{\Omega}\right\rangle$-modules.  We have $M\otimes_{(R,P_{\Omega})}R_{MRB}\left\langle Q_{\Omega}\right\rangle=M\otimes_{R_{MRB}\left\langle Q_{\Omega}\right\rangle}R_{MRB}\left\langle Q_{\Omega}\right\rangle\cong M$. Hence $M\otimes_{(R,P_{\Omega})}R\cong M\otimes_{(R,P_{\Omega})}R_{MRB}\left\langle Q_{\Omega}\right\rangle\cong M$.
\end{proof}

Let $\{(M_i, \mathfrak{m}_{\Omega_i}) \mid i \in I\}$ be a set of left $(R, P)$-modules. Then $\left(\bigoplus_{i \in I} M_i, \bigoplus_{i \in I} \mathfrak{m}_{\Omega_i}\right)$ is also a left $(R, P_{\Omega})$-module and is called the direct sum of $\{(M_i, \mathfrak{m}_{\Omega_i}) \mid i \in I\}$, where $\bigoplus_{i \in I} \mathfrak{m}_{\Omega_i}=\left(\bigoplus_{i \in I} \mathfrak{m}_{\omega_i}\right)_{\omega\in\Omega}$ is defined by
\begin{equation*}
	\bigoplus_{i \in I} \left(\mathfrak{m}_{\omega_i} \big( (m_i)_I \big)\right)_{\omega\in\Omega} = (\mathfrak{m}_{\omega_i}(m_i))_I,
\end{equation*}
for each $\omega\in\Omega$. Furthermore
\begin{equation*}
	\bigoplus_{i \in I} (M_i, \mathfrak{m}_{\Omega_i}) = \left(\bigoplus_{i \in I} M_i, \bigoplus_{i \in I} \mathfrak{m}_{\Omega_i}\right).
\end{equation*}
For each $i \in I$, the map $\phi_i : (M_i, \mathfrak{m}_{\Omega_i}) \longrightarrow \bigoplus_{i \in I} (M_i, \mathfrak{m}_{\Omega_i})$ is a monomorphism with $ \phi_i \circ \mathfrak{m}_{\Omega_i}=\left(\bigoplus_{i \in I} \mathfrak{m}_{\Omega_i}\right) \circ \phi_i$. The map $\psi_i : \bigoplus_{i \in I} (M_i, \mathfrak{m}_{\Omega_i}) \longrightarrow (M_i, \mathfrak{m}_{\Omega_i})$ is an epimorphism with $\psi_i \circ \left(\bigoplus_{i \in I} \mathfrak{m}_{\Omega_i}\right)=\mathfrak{m}_{\Omega_i} \circ \psi_i$. Further $\phi_i \circ \psi_i = id_{\bigoplus_{i \in I}(M_i, p_i)}$, and $\psi_i \circ \phi_i = id_{(M_i, \mathfrak{m}_{\Omega_i})}$.

\begin{lemma}{\label{K}}
	Let $\{(M_i,\mathfrak{m}_{\Omega_i})|i\in I\}, \{(N_i,\mathfrak{n}_{\Omega_i})|i\in I\}$ be sets of $(R,P_{\Omega})$-modules with corresponding $(R,P_{\Omega})$-module homomorphisms $\psi_i : (M_i,\mathfrak{m}_{\Omega_i})\longrightarrow(N_i,\mathfrak{n}_{\Omega_i})$ for each $i\in I$. Define the map
	\begin{equation*}
		\psi:=\bigoplus_{i\in I}\psi_i:\ \bigoplus_{i\in I}(M_i,\mathfrak{m}_{\Omega_i})\longrightarrow\bigoplus_{i\in I}(N_i,\mathfrak{n}_{\Omega_i}),\quad (m_i)_I\longmapsto(\psi_i(m_i))_I,
	\end{equation*}
	then $\psi$ is injective left $(R,P_{\Omega})$-module homomorphism if and only if each $\psi_i$ is also injective where $\psi_i:(M_i,\mathfrak{m}_{\Omega_i})\longrightarrow(N_i,\mathfrak{n}_{\Omega_i})$.
\end{lemma}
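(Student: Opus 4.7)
The plan is to prove both directions via componentwise arguments, using that both the module structure and operator action on a direct sum are defined componentwise.

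First I would verify that $\psi = \bigoplus_{i\in I} \psi_i$ is indeed a left $(R,P_{\Omega})$-module homomorphism. Since the $R$-action on $\bigoplus_{i\in I} M_i$ is defined componentwise and each $\psi_i$ is $R$-linear, the map $\psi$ is $R$-linear. For the compatibility with operators, for each $\omega \in \Omega$ and $(m_i)_I \in \bigoplus_{i\in I} M_i$, I compute
\begin{equation*}
\psi\Bigl(\Bigl(\bigoplus_{i\in I}\mathfrak{m}_{\omega_i}\Bigr)((m_i)_I)\Bigr) = \psi((\mathfrak{m}_{\omega_i}(m_i))_I) = (\psi_i(\mathfrak{m}_{\omega_i}(m_i)))_I = (\mathfrak{n}_{\omega_i}(\psi_i(m_i)))_I,
\end{equation*}
which equals $\bigl(\bigoplus_{i\in I}\mathfrak{n}_{\omega_i}\bigr)(\psi((m_i)_I))$, using that each $\psi_i$ commutes with the respective operators.

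Next I would handle the two directions of the biconditional. For the ``if'' direction, suppose each $\psi_i$ is injective and take $(m_i)_I \in \ker\psi$; then $(\psi_i(m_i))_I = 0$ in $\bigoplus_{i\in I} N_i$, so $\psi_i(m_i) = 0$ for all $i$, which by injectivity of each $\psi_i$ forces $m_i = 0$ for all $i$, hence $(m_i)_I = 0$. For the ``only if'' direction, suppose $\psi$ is injective and fix $j \in I$ and $m \in M_j$ with $\psi_j(m) = 0$. Using the canonical inclusion $\phi_j$ described above the lemma, the element $\phi_j(m) \in \bigoplus_{i\in I} M_i$ has only the $j$-th coordinate equal to $m$ and zero elsewhere, so
\begin{equation*}
\psi(\phi_j(m)) = \phi_j^{N}(\psi_j(m)) = \phi_j^{N}(0) = 0,
\end{equation*}
where $\phi_j^{N}$ is the corresponding inclusion into $\bigoplus_{i\in I} N_i$. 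By injectivity of $\psi$, we get $\phi_j(m) = 0$, and applying the projection $\psi_j$ (to the $j$-th summand), we conclude $m = 0$.

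Since the statement is genuinely routine once the componentwise structure is unpacked, there is no real obstacle here; the only subtlety is simply keeping the bookkeeping straight between the module $M_i$, its embedding into the direct sum, and the ambient operator $\bigoplus_{i\in I}\mathfrak{m}_{\Omega_i}$. I expect the proof to be short and to rely only on the properties of $\phi_i$ and $\psi_i$ recalled immediately before the lemma.
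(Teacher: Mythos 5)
Your proof is correct and follows the same route as the paper, which simply observes that $\ker(\psi)=\bigoplus_{i\in I}\ker(\psi_i)$; your write-up just spells out the componentwise kernel computation (plus the routine check that $\psi$ is an operated-module homomorphism) in full detail. The only cosmetic point is the notational clash between the lemma's component maps $\psi_i$ and the projection maps $\psi_i$ defined just before the lemma, which you should disambiguate when invoking the projection in the ``only if'' direction.
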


\begin{proof}
	The verification proceeds by $\text{ker}(\psi)=\oplus_{i\in I}\text{ker}(\psi_i)$.
\end{proof}

\begin{lemma}{\label{L}}
	Let $\{(M_i,\mathfrak{m}_{\Omega_i})|i\in I\}$ be a set of left $(R,P_{\Omega})$-modules and $(S,\mathfrak{s}_{\Omega})$ be a right $(R,P_{\Omega})$-module. Then
	\begin{equation*}
		S\otimes_{(R,P_{\Omega})}(\oplus_{i\in I}M_i)\cong\oplus_{i\in I}(S\otimes_{(R,P_{\Omega})}M_i).
	\end{equation*}
\end{lemma}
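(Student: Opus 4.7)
The plan is to construct mutually inverse abelian group homomorphisms between the two sides using the universal properties established earlier in Section~\ref{4}. Since Lemma~\ref{K} and the preceding discussion give the direct sum of left $(R,P_\Omega)$-modules together with its canonical inclusions $\phi_i:(M_i,\mathfrak{m}_{\Omega_i})\longrightarrow\bigoplus_{j\in I}(M_j,\mathfrak{m}_{\Omega_j})$, the natural strategy is: build the forward map via the universal property of $S\otimes_{(R,P_\Omega)}(\oplus_i M_i)$, and build the backward map by assembling the tensored inclusions $id_S\otimes_{(R,P_\Omega)}\phi_i$ via the universal property of direct sums.

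First, I would define $\Phi:S\times(\oplus_{i\in I}M_i)\longrightarrow\oplus_{i\in I}(S\otimes_{(R,P_\Omega)}M_i)$ by
\begin{equation*}
	\Phi\bigl(s,(m_i)_I\bigr):=\bigl(s\otimes_{(R,P_\Omega)}m_i\bigr)_I,
\end{equation*}
and verify that $\Phi$ is $(R,P_\Omega)$-bilinear. Additivity in each slot and $R$-balance $\Phi(sr,(m_i)_I)=\Phi(s,(rm_i)_I)$ reduce componentwise to the bilinearity of each $S\times M_i\to S\otimes_{(R,P_\Omega)}M_i$. The only condition particular to the multiple Rota-Baxter setting is
\begin{equation*}
	\Phi\bigl(\mathfrak{s}_\omega(s),(m_i)_I\bigr)=\Phi\bigl(s,(\oplus_{i\in I}\mathfrak{m}_{\omega_i})((m_i)_I)\bigr),
\end{equation*}
which holds in each coordinate because $\mathfrak{s}_\omega(s)\otimes_{(R,P_\Omega)}m_i=s\otimes_{(R,P_\Omega)}\mathfrak{m}_{\omega_i}(m_i)$. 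By the universal property of the tensor product, $\Phi$ factors uniquely through an abelian group homomorphism
\begin{equation*}
	\widetilde{\Phi}:S\otimes_{(R,P_\Omega)}\bigl(\oplus_{i\in I}M_i\bigr)\longrightarrow\oplus_{i\in I}\bigl(S\otimes_{(R,P_\Omega)}M_i\bigr),\quad s\otimes_{(R,P_\Omega)}(m_i)_I\longmapsto(s\otimes_{(R,P_\Omega)}m_i)_I.
\end{equation*}

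For the inverse, I would note that each inclusion $\phi_i$ is a left $(R,P_\Omega)$-module homomorphism, so the functor $S\otimes_{(R,P_\Omega)}-$ produces maps
\begin{equation*}
	id_S\otimes_{(R,P_\Omega)}\phi_i:S\otimes_{(R,P_\Omega)}M_i\longrightarrow S\otimes_{(R,P_\Omega)}\bigl(\oplus_{j\in I}M_j\bigr),\quad s\otimes_{(R,P_\Omega)}m_i\longmapsto s\otimes_{(R,P_\Omega)}\phi_i(m_i).
\end{equation*}
The universal property of the direct sum in $\mathbf{Ab}$ then yields a unique abelian group homomorphism
\begin{equation*}
	\Psi:\oplus_{i\in I}\bigl(S\otimes_{(R,P_\Omega)}M_i\bigr)\longrightarrow S\otimes_{(R,P_\Omega)}\bigl(\oplus_{j\in I}M_j\bigr),
\end{equation*}
whose restriction to the $i$-th summand is $id_S\otimes_{(R,P_\Omega)}\phi_i$.

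It remains to check $\widetilde{\Phi}\circ\Psi=id$ and $\Psi\circ\widetilde{\Phi}=id$. Both identities can be verified on pure tensors and then extended by additivity: on a generator $s\otimes_{(R,P_\Omega)}m_i$ of the $i$-th summand, $\widetilde{\Phi}\Psi(s\otimes_{(R,P_\Omega)}m_i)=\widetilde{\Phi}(s\otimes_{(R,P_\Omega)}\phi_i(m_i))=(s\otimes_{(R,P_\Omega)}\delta_{ij}m_i)_{j\in I}$, which is the original generator; conversely, $\Psi\widetilde{\Phi}(s\otimes_{(R,P_\Omega)}(m_i)_I)=\sum_i s\otimes_{(R,P_\Omega)}\phi_i(m_i)=s\otimes_{(R,P_\Omega)}(m_i)_I$, where the last equality uses that $(m_i)_I$ has finite support in the direct sum so the sum is finite. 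The principal technical point to watch is that the images of pure tensors from different summands must be recombined inside a single tensor, which relies on the finite-support condition defining $\oplus_{i\in I}M_i$; apart from that, the argument is a coordinate-wise reduction to the ordinary module-theoretic statement, with the multiple Rota-Baxter data entering only through the bilinearity check above.
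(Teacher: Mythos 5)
Your proof is correct and follows essentially the same strategy as the paper: exhibit mutually inverse abelian group homomorphisms between $S\otimes_{(R,P_{\Omega})}(\oplus_{i}M_i)$ and $\oplus_{i}(S\otimes_{(R,P_{\Omega})}M_i)$. In fact your version is the more careful one, since you justify well-definedness of the forward map via the $(R,P_{\Omega})$-bilinearity check (including the operator condition $\mathfrak{s}_{\omega}(s)\otimes m_i=s\otimes\mathfrak{m}_{\omega_i}(m_i)$) and build the inverse from the tensored inclusions $id_S\otimes_{(R,P_{\Omega})}\phi_i$, whereas the paper merely writes down the two formulas without these verifications.
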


\begin{proof}
	We define group homomorphisms
	\begin{equation*}
		f_1 : S\otimes_{(R,P_{\Omega})}(\oplus_{i\in I}M_i)\longrightarrow\oplus_{i\in I}(S\otimes_{(R,P_{\Omega})}M_i),\quad f_1(s\otimes(m_i)_I)=(s\otimes m_i)_I,
	\end{equation*}
	and
	\begin{equation*}
		f_2 : \oplus_{i\in I}(S\otimes_{(R,P_{\Omega})}M_i)\longrightarrow S\otimes_{(R,P_{\Omega})}(\oplus_{i\in I}M_i),\quad f_2(s_i\otimes m_i)_I=(\prod_{i\in I}s_i)\otimes (m_i)_I.
	\end{equation*}
	It follows immediately that $f_1\circ f_2 = id_{\oplus_{i\in I}(S\otimes_{(R,P_{\Omega})}M_i)}$ and $f_2\circ f_1 = id_{S\otimes_{(R,P_{\Omega})}(\oplus_{i\in I}M_i)}$.
\end{proof}

\begin{prop}{\label{M}}
	Let $\{(M_i,\mathfrak{m}_{\Omega_i})|i\in I\}$ be a set of left $(R,P_{\Omega})$-modules. Then the $(R,P_{\Omega})$-module $\bigoplus_{i\in I}(M_i,\mathfrak{m}_{\Omega_i})$ is flat if and only if each $(R,P_{\Omega})$-module $(M_i,\mathfrak{m}_{\Omega_i})$ is flat.
\end{prop}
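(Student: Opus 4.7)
The plan is to deduce the proposition directly by combining Lemma~\ref{K} and Lemma~\ref{L}, since both directions are formal consequences of the additivity of the tensor bifunctor together with the fact that injections are detected componentwise in a direct sum. The only nontrivial content is checking that the isomorphism from Lemma~\ref{L} is natural in the second argument, so that it intertwines the morphism induced by an injection $(S,\mathfrak{s}_{\Omega})\hookrightarrow (N,\mathfrak{n}_{\Omega})$ of left $(R,P_{\Omega})$-modules.

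First I would fix an arbitrary short exact sequence
\begin{equation*}
\xymatrix{0 \ar[r] & (S,\mathfrak{s}_{\Omega}) \ar[r]^-{i} & (N,\mathfrak{n}_{\Omega})}
\end{equation*}
of left $(R,P_{\Omega})$-modules and consider the commutative square whose vertical maps are the isomorphisms $\Phi_S,\Phi_N$ supplied by Lemma~\ref{L}:
\begin{equation*}
\xymatrix{
\left(\bigoplus_{i\in I}M_i\right)\otimes_{(R,P_{\Omega})}S \ar[r]^-{\mathrm{id}\otimes i} \ar[d]_-{\Phi_S} & \left(\bigoplus_{i\in I}M_i\right)\otimes_{(R,P_{\Omega})}N \ar[d]^-{\Phi_N} \\
\bigoplus_{i\in I}(M_i\otimes_{(R,P_{\Omega})}S) \ar[r]_-{\bigoplus_i(\mathrm{id}\otimes i)} & \bigoplus_{i\in I}(M_i\otimes_{(R,P_{\Omega})}N).
}
\end{equation*}
Commutativity is immediate from the explicit formula $\Phi_S(s\otimes (m_i)_I)=(s\otimes m_i)_I$ in the proof of Lemma~\ref{L}: both composites send a pure tensor $s\otimes(m_i)_I$ to $(i(s)\otimes m_i)_I$. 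Hence $\mathrm{id}\otimes i$ is injective if and only if $\bigoplus_i(\mathrm{id}\otimes i)$ is injective.

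For the forward direction, assume $\bigoplus_{i\in I}(M_i,\mathfrak{m}_{\Omega_i})$ is flat. Then the top arrow $\mathrm{id}\otimes i$ is injective, so by the commutative square the map $\bigoplus_i(\mathrm{id}\otimes i)$ is injective, and by Lemma~\ref{K} every component $\mathrm{id}_{M_i}\otimes i:M_i\otimes_{(R,P_{\Omega})}S\to M_i\otimes_{(R,P_{\Omega})}N$ is injective; since $(S,\mathfrak{s}_{\Omega})\hookrightarrow(N,\mathfrak{n}_{\Omega})$ was arbitrary, each $(M_i,\mathfrak{m}_{\Omega_i})$ is flat. Conversely, if each $(M_i,\mathfrak{m}_{\Omega_i})$ is flat, then each component $\mathrm{id}_{M_i}\otimes i$ is injective, so by Lemma~\ref{K} the direct sum $\bigoplus_i(\mathrm{id}\otimes i)$ is injective, and the commutative square yields the injectivity of $\mathrm{id}\otimes i$, whence $\bigoplus_{i\in I}(M_i,\mathfrak{m}_{\Omega_i})$ is flat.

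The only potential obstacle is the verification that $\Phi_S$ and $\Phi_N$ are not merely abelian group isomorphisms but also intertwine the induced morphism $\mathrm{id}\otimes i$ with $\bigoplus_i(\mathrm{id}\otimes i)$; this is a one-line check on pure tensors as above, so no real difficulty arises. The rest is a purely diagrammatic consequence of Lemmas~\ref{K} and~\ref{L}.
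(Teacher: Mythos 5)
Your proof is correct and follows essentially the same route as the paper's: both directions reduce to Lemma~\ref{K} (injectivity of a direct sum of maps is detected componentwise) combined with the isomorphism of Lemma~\ref{L}. The one refinement you add --- explicitly checking that the Lemma~\ref{L} isomorphisms form a commutative square intertwining $\mathrm{id}\otimes i$ with $\bigoplus_i(\mathrm{id}\otimes i)$ --- is a naturality verification the paper leaves implicit (it invokes Lemma~\ref{L} only in the converse direction), so your write-up is if anything slightly more complete.
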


\begin{proof}
	Let $(S,\mathfrak{s}_{\Omega})$ and $(T,\mathfrak{t}_{\Omega})$ be right $(R,P_{\Omega})$-modules, and $\phi: (S,\mathfrak{s}_{\Omega})\longrightarrow(T,\mathfrak{t}_{\Omega})$ be a right $(R,P_{\Omega})$-module monomorphism.
	
	Assume that each left $(R,P_{\Omega})$-module $(M_i,\mathfrak{m}_{\Omega_i})$ is flat. Then each
	\begin{equation*}
		\phi\otimes id_{M_i} : S\otimes_{(R,P_{\Omega})}M_i\longrightarrow T\otimes_{(R,P_{\Omega})}M_i,
	\end{equation*}
	is also an injective abelian group homomorphism. By Lemma \ref{K}, the homomorphism
	\begin{equation*}
		\bigoplus_{i\in I}(\phi\otimes id_{M_i}):\bigoplus_{i\in I}(S\otimes_{(R,P_{\Omega})}M_i)\longrightarrow\bigoplus_{i\in I}(T\otimes_{(R,P_{\Omega})}M_i),
	\end{equation*}
	is injective. Thus the left $(R,P_{\Omega})$-module $\bigoplus_{i\in I}(M_i,\mathfrak{m}_{\Omega_i})$ is flat.
	
	In contrast, assume that the left $(R,P_{\Omega})$-module $\bigoplus_{i\in I}(M_i,\mathfrak{m}_{\Omega_i})$ is flat. Then right $(R,P_{\Omega})$-module monomorphism $\phi$ induced the map
	\begin{equation*}
		\phi\otimes id_{(\oplus_{i\in I}M_i)}:S\otimes_{(R,P_{\Omega})}\left( \bigoplus_{i\in I}M_i\right) \longrightarrow T\otimes_{(R,P_{\Omega})}\left( \bigoplus_{i\in I}M_i\right),
	\end{equation*}
	is an injective group homomorphism. By Lemma \ref{L}, we have
	\begin{equation*}
		S\otimes_{(R,P_{\Omega})}\left( \bigoplus_{i\in I}M_i\right)\cong \bigoplus_{i\in I}(S\otimes_{(R,P_{\Omega})}M_i),\quad T\otimes_{(R,P_{\Omega})}\left( \bigoplus_{i\in I}M_i\right)\cong \bigoplus_{i\in I}(T\otimes_{(R,P_{\Omega})}M_i).
	\end{equation*}
	Then the group homomorphism
	\begin{equation*}
		\bigoplus_{i\in I}(S\otimes_{(R,P_{\Omega})}M_i)\longrightarrow\bigoplus_{i\in I}(T\otimes_{(R,P_{\Omega})}M_i),
	\end{equation*}
	is injective. By Lemma \ref{K}, the map
	\begin{equation*}
		\phi\otimes id_{M_i}: S\otimes_{(R,P_{\Omega})}M_i\longrightarrow T\otimes_{(R,P_{\Omega})}M_i,\quad \forall i\in I,
	\end{equation*}
	is injective. Thus each left $(R,P_{\Omega})$-module $(M_i,\mathfrak{m}_{\Omega_i})$ is flat.
\end{proof}

\begin{theorem}{\label{N}}
	Every free left $(R,P_{\Omega})$-module is flat.
\end{theorem}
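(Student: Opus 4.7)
The plan is to reduce the statement to the flatness of the operator ring $R_{MRB}\left\langle Q_{\Omega}\right\rangle$ via the category isomorphism of Proposition~\ref{F}. By the universal properties of free objects, Proposition~\ref{F} sends the free left $(R,P_{\Omega})$-module on a set $X$ (constructed in Theorem~\ref{C}) to the free left $R_{MRB}\left\langle Q_{\Omega}\right\rangle$-module on $X$, namely $\bigoplus_{x\in X}R_{MRB}\left\langle Q_{\Omega}\right\rangle$. In view of Proposition~\ref{M}, it therefore suffices to show that $R_{MRB}\left\langle Q_{\Omega}\right\rangle$ is flat as a left $(R,P_{\Omega})$-module.

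To prove this, I would first establish that the multiple Rota-Baxter tensor product is nothing but the ordinary tensor product over $R_{MRB}\left\langle Q_{\Omega}\right\rangle$; that is, for any right $(R,P_{\Omega})$-module $M$ and any left $(R,P_{\Omega})$-module $N$,
\begin{equation*}
M\otimes_{(R,P_{\Omega})}N\;\cong\;M\otimes_{R_{MRB}\left\langle Q_{\Omega}\right\rangle}N.
\end{equation*}
Indeed, the two defining conditions of $(R,P_{\Omega})$-bilinearity --- the $R$-balancing $\varphi(mr,n)=\varphi(m,rn)$ and the operator balancing $\varphi(\mathfrak{m}_{\omega}(m),n)=\varphi(m,\mathfrak{n}_{\omega}(n))$ --- translate, under the right-module analogue of Proposition~\ref{F}, to balancing with respect to a generating set $\{r,\,Q_{\omega}\}$ of $R_{MRB}\left\langle Q_{\Omega}\right\rangle$. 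Since the defining relations $Q_{\alpha}rQ_{\beta}=P_{\alpha}(r)Q_{\beta}-Q_{\beta}P_{\alpha}(r)-\lambda_{\beta}Q_{\alpha}r-\lambda_{\alpha}Q_{\beta}r$ of $R_{MRB}\left\langle Q_{\Omega}\right\rangle$ mirror the multiple Rota-Baxter identity Eq.~(\ref{a}), they are automatically respected once these two balancings hold, so the canonical map between the two tensor products is an isomorphism.

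Given this identification, for any right $(R,P_{\Omega})$-module $M$ there is a natural chain of isomorphisms
\begin{equation*}
M\otimes_{(R,P_{\Omega})}R_{MRB}\left\langle Q_{\Omega}\right\rangle\;\cong\;M\otimes_{R_{MRB}\left\langle Q_{\Omega}\right\rangle}R_{MRB}\left\langle Q_{\Omega}\right\rangle\;\cong\;M,
\end{equation*}
so that the functor $-\otimes_{(R,P_{\Omega})}R_{MRB}\left\langle Q_{\Omega}\right\rangle$ is naturally isomorphic to the identity functor on right $(R,P_{\Omega})$-modules and hence preserves every monomorphism. This shows $R_{MRB}\left\langle Q_{\Omega}\right\rangle$ is flat, and combined with Proposition~\ref{M} it yields the flatness of every free left $(R,P_{\Omega})$-module. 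The main technical obstacle is the explicit verification of the tensor product identification: one must carefully argue that the operator-balancing relation, together with the $R$-balancing, forces balancing over the full operator ring, which is precisely where the multiple Rota-Baxter axioms built into the definition of $R_{MRB}\left\langle Q_{\Omega}\right\rangle$ enter the argument.
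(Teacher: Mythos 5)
Your proof is correct, but it takes a genuinely different route from the paper. The paper argues directly on the explicit model $M_R(X)/I_X$ of Theorem~\ref{C}: for a singleton $X=\{x\}$ it writes down a pair of mutually inverse maps giving $S\otimes_{(R,P_{\Omega})}\bigl(M_R(\{x\})/I_{\{x\}}\bigr)\cong S$ for any right $(R,P_{\Omega})$-module $S$, and then reduces the general case to the singleton case via the decomposition $M_R(X)/I_X\cong\bigoplus_{x\in X}M_R(\{x\})/I_{\{x\}}$ together with Lemmas~\ref{K} and~\ref{L}. You instead push everything through the operator ring: you identify $\otimes_{(R,P_{\Omega})}$ with $\otimes_{R_{MRB}\left\langle Q_{\Omega}\right\rangle}$, identify the free left $(R,P_{\Omega})$-module on $X$ with $\bigoplus_{x\in X}R_{MRB}\left\langle Q_{\Omega}\right\rangle$ via the universal properties and Proposition~\ref{F}, and then invoke the standard fact that a free module over an ordinary ring is flat. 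Your route buys a cleaner and more robust argument --- in particular it sidesteps the explicit maps $\varphi,\varphi'$ in the paper's proof, and it is consistent with the identification $M\otimes_{(R,P_{\Omega})}R_{MRB}\left\langle Q_{\Omega}\right\rangle=M\otimes_{R_{MRB}\left\langle Q_{\Omega}\right\rangle}R_{MRB}\left\langle Q_{\Omega}\right\rangle\cong M$ that the paper itself already uses in Theorem~\ref{J}. The price is two verifications you correctly flag but should write out: (i) the right-module analogue of Proposition~\ref{F}, which is not proved in the paper and must be checked against Eq.~(\ref{b}) (setting $m\cdot Q_{\omega}:=\mathfrak{m}_{\omega}(m)$, Eq.~(\ref{b}) becomes exactly the defining relation of $I_{R,Q_{\Omega}}$ acting on the right, so it does work); and (ii) that balancing over the generators $R\cup\{Q_{\omega}\}$ propagates to balancing over all of $R_{MRB}\left\langle Q_{\Omega}\right\rangle$, which follows by biadditivity and induction on word length in the free product. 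One small wording point: the functor $-\otimes_{(R,P_{\Omega})}R_{MRB}\left\langle Q_{\Omega}\right\rangle$ lands in $\mathbf{Ab}$, so it is naturally isomorphic to the forgetful functor rather than to the identity functor; this is of course still enough to conclude that it preserves monomorphisms.
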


\begin{proof}
	Let $(M_R(X)/I_X,\overline{\mathfrak{m}}_{\Omega}')$ be the free left $(R,P_{\Omega})$-module generated by $X$ as constructed in Theorem \ref{C}. Let $(N,\mathfrak{n}_{\Omega})\longrightarrow (N',\mathfrak{n}_{\Omega}')$ be right $(R,P_{\Omega})$-module homomorphism. We need to show that if $(N,\mathfrak{n}_{\Omega})\longrightarrow (N',\mathfrak{n}_{\Omega}')$ is a right $(R,P_{\Omega})$-module monomorphism, then the induced map $N\otimes_{(R,P_{\Omega})}(M_R(X)/I_X)\longrightarrow N'\otimes_{(R,P_{\Omega})}(M_R(X)/I_X)$ is also a abelian group monomorphism.
	
	First, let $X=\{x\}$ be a singleton and $(S,\mathfrak{s}_{\Omega})$ a right $(R,P_{\Omega})$-module, define the maps
	\begin{equation*}
		\begin{split}
			\varphi : S\otimes_{(R,P_{\Omega})}\left(\bigcup^{\infty}_{n\geqslant1}M_n(x)/I_{\left\{x\right\}}\right)&\longrightarrow S,\\
			s\otimes\left(r_1\otimes\omega_1\otimes\cdots\otimes r_n\otimes x+I_{\left\{x\right\}}\right)&\longmapsto sr_1\cdots r_n,
		\end{split}
	\end{equation*}
	and
	\begin{equation*}
		\begin{split}
			\varphi' : S&\longrightarrow S\otimes_{(R,P_{\Omega})}\left(\bigcup^{\infty}_{n\geqslant1}M_n(x)/I_{\left\{x\right\}}\right),\\
			s &\longmapsto s\otimes\left({\bf 1}_{R}\otimes\omega\otimes x+I_{\left\{x\right\}}\right),
		\end{split}
	\end{equation*}
	for $s\in S$, $r_1\otimes\omega_1\otimes\cdots\otimes r_n\otimes x\in M_n(x)$.
	
	Apparently $\varphi\circ \varphi'=id_M$ and $\varphi'\circ \varphi=id_{S\otimes_{(R,P_{\Omega})}\left(\left(\bigoplus_{n\geqslant1}R^{\otimes n}\right)x/I_{\left\{x\right\}}\right)}$, then the maps $\varphi,\varphi'$ are mutually inverse isomorphisms and
	\begin{equation*}
		S\otimes_{(R,P_{\Omega})}(M_R(\left\{x\right\})/I_{\left\{x\right\}})\cong\ S.
	\end{equation*}
	For any set $X$, through $M_R(X)/I_X\cong \bigoplus_{x\in X}M_R(\left\{x\right\})/I_{\left\{x\right\}}$, we have
	\begin{equation*}
		\begin{split}
			S\otimes_{(R,P_{\Omega})}(M_R(X)/I_X)\ &\cong\ S\otimes_{(R,P_{\Omega})}\left(\bigoplus_{x\in X}M_R(\left\{x\right\})/I_{\left\{x\right\}}\right)\\
			&\cong\ \bigoplus_{x\in X}(S\otimes_{(R,P_{\Omega})}M_R(\left\{x\right\})/I_{\left\{x\right\}})\quad(\text{by Lemma \ref{K}}) \\
			&\cong\ \bigoplus_{x\in X}S.
		\end{split}
	\end{equation*}
	Applying the above results to right $(R,P_{\Omega})$-modules $N'$ and $N$, we have
	$N'\otimes_{(R,P_{\Omega})}(M_R(X)/I_X)\cong\oplus_{x\in X}N'$ and $N\otimes_{(R,P_{\Omega})}(M_R(X)/I_X)\cong\oplus_{x\in X}N$. The group homomorphism $\oplus_{x\in X}N'\longrightarrow \oplus_{x\in X}N$ is injective by Lemma \ref{L}.
\end{proof}

\begin{lemma}{\label{O}}
	Every projective left $(R,P_{\Omega})$-modules is a direct summand of a free left $(R,P_{\Omega})$-module.
\end{lemma}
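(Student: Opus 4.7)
The plan is to mimic the classical argument for ordinary modules, using the projectivity characterization together with the existence of a free presentation guaranteed by Proposition~\ref{BB}. Let $(S,\mathfrak{s}_{\Omega})$ be a projective left $(R,P_{\Omega})$-module. By Proposition~\ref{BB}(a), there exists a free left $(R,P_{\Omega})$-module $(F,\mathfrak{p}_{\Omega})$ together with a left $(R,P_{\Omega})$-module epimorphism $\pi:(F,\mathfrak{p}_{\Omega})\longrightarrow(S,\mathfrak{s}_{\Omega})$.

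Next, I would apply the definition of projectivity to the diagram formed by $\pi$ and the identity morphism $\id_{S}:(S,\mathfrak{s}_{\Omega})\longrightarrow(S,\mathfrak{s}_{\Omega})$. Projectivity yields a left $(R,P_{\Omega})$-module homomorphism $\sigma:(S,\mathfrak{s}_{\Omega})\longrightarrow(F,\mathfrak{p}_{\Omega})$ such that $\pi\circ\sigma=\id_{S}$, as displayed in
\begin{equation*}
    \xymatrix{
        & (S,\mathfrak{s}_{\Omega}) \ar@{-->}[dl]_-{\sigma} \ar[d]^-{\id_S} \\
        (F,\mathfrak{p}_{\Omega}) \ar[r]_-{\pi} & (S,\mathfrak{s}_{\Omega}) \ar[r] & 0.
    }
\end{equation*}
In particular $\sigma$ is a split monomorphism in the category $_{(R,P_{\Omega})}\mathbf{Mod}$.

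Finally, I would verify that $F$ decomposes as an internal direct sum $F=\sigma(S)\oplus\ker(\pi)$ in the category of left $(R,P_{\Omega})$-modules. The standard splitting formula $x=\sigma(\pi(x))+(x-\sigma(\pi(x)))$ shows the sum is $F$, and if $\sigma(s)\in\ker(\pi)$ then $s=\pi(\sigma(s))=0$, so the sum is direct. Since both $\sigma(S)$ and $\ker(\pi)$ are closed under the $R$-action as well as under each operator $\mathfrak{p}_{\omega}$ (the former because $\sigma$ is a left $(R,P_{\Omega})$-module homomorphism, so $\mathfrak{p}_{\omega}(\sigma(s))=\sigma(\mathfrak{s}_{\omega}(s))\in\sigma(S)$; the latter because $\pi$ is a left $(R,P_{\Omega})$-module homomorphism, so $\pi(\mathfrak{p}_{\omega}(x))=\mathfrak{s}_{\omega}(\pi(x))=0$ whenever $\pi(x)=0$), both summands are left $(R,P_{\Omega})$-submodules. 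Hence $(F,\mathfrak{p}_{\Omega})\cong(S,\mathfrak{s}_{\Omega})\oplus(\ker\pi,\mathfrak{p}_{\Omega}|_{\ker\pi})$ as left $(R,P_{\Omega})$-modules, exhibiting $(S,\mathfrak{s}_{\Omega})$ as a direct summand of the free left $(R,P_{\Omega})$-module $(F,\mathfrak{p}_{\Omega})$.

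There is no serious obstacle here; the only thing to be careful about is that the splitting $\sigma$ produced by projectivity is automatically a morphism in $_{(R,P_{\Omega})}\mathbf{Mod}$, so the submodules $\sigma(S)$ and $\ker(\pi)$ are genuinely stable under all operators $\mathfrak{p}_{\omega}$ and not merely under the $R$-action. Once this is checked, the decomposition follows from the purely abelian-group-theoretic splitting lemma.
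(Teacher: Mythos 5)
Your proposal is correct and follows essentially the same route as the paper: obtain a free cover $\pi:(F,\mathfrak{p}_{\Omega})\twoheadrightarrow(S,\mathfrak{s}_{\Omega})$, use projectivity against $\id_S$ to produce a splitting $\sigma$, and conclude $F=\sigma(S)\oplus\ker\pi$ in $_{(R,P_{\Omega})}\mathbf{Mod}$. Your explicit check that both summands are stable under the operators $\mathfrak{p}_{\omega}$ is a welcome detail that the paper leaves implicit.
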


\begin{proof}
	If $(M,\mathfrak{m}_{\Omega})$ is a projective left $(R,P_{\Omega})$-module and $(F(M), \overline{\mathfrak{m}}_{\Omega})$ is a free left $(R,P_{\Omega})$-module generated by set $M$, then there exists a left $(R,P_{\Omega})$-module homomorphism $\varphi:(F(M), \overline{\mathfrak{m}}_{\Omega})\longrightarrow (M,\mathfrak{m}_{\Omega})$ was induced by the identity set map $id_M: (M,\mathfrak{m}_{\Omega})\longrightarrow (M,\mathfrak{m}_{\Omega})$. By the universal property of $(F(M), \overline{\mathfrak{m}}_{\Omega})$, $\varphi$ is surjective with $\varphi|_M=id_M$. Since $(M,\mathfrak{m}_{\Omega})$ is a projective left $(R,P_{\Omega})$-module, there exists a left $(R,P_{\Omega})$-module homomorphism $\psi: (M,\mathfrak{m}_{\Omega})\longrightarrow (F(M),\overline{\mathfrak{m}}_{\Omega})$ such that $\varphi\circ \psi = id_{(M,\mathfrak{m}_{\Omega})}$. Then $\psi$ is injective, we have
	\begin{equation*}
		F(M)=\text{im}\psi\oplus\text{ker}\varphi\cong M\oplus\text{ker}\varphi.
	\end{equation*}
	Hence $(M,\mathfrak{m}_{\Omega})$ is the direct summand of $(F(M), \overline{\mathfrak{m}}_{\Omega})$.
\end{proof}

By Proposition \ref{M}, Theorem \ref{N} and Lemma \ref{O}, we obtain the following conclusion.

\begin{theorem}{\label{P}}
	Every projective left $(R,P_{\Omega})$-module is flat.
\end{theorem}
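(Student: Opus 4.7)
The plan is to assemble the statement directly from the three results cited just above it, so no fundamentally new argument is needed; what follows is essentially a bookkeeping exercise of gluing Lemma~\ref{O}, Theorem~\ref{N}, and Proposition~\ref{M} in the right order.

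First, let $(M,\mathfrak{m}_{\Omega})$ be an arbitrary projective left $(R,P_{\Omega})$-module. By Lemma~\ref{O}, I can realize $(M,\mathfrak{m}_{\Omega})$ as a direct summand of some free left $(R,P_{\Omega})$-module $(F(M),\overline{\mathfrak{m}}_{\Omega})$; explicitly there exists a left $(R,P_{\Omega})$-module $(K,\mathfrak{k}_{\Omega})$ (the kernel of the surjection $F(M)\longrightarrow M$ built from the universal property) such that
\[
(F(M),\overline{\mathfrak{m}}_{\Omega})\cong (M,\mathfrak{m}_{\Omega})\oplus(K,\mathfrak{k}_{\Omega}),
\]
as left $(R,P_{\Omega})$-modules.

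Next, by Theorem~\ref{N}, the free left $(R,P_{\Omega})$-module $(F(M),\overline{\mathfrak{m}}_{\Omega})$ is flat. Transferring flatness across the isomorphism above, the direct sum $(M,\mathfrak{m}_{\Omega})\oplus(K,\mathfrak{k}_{\Omega})$ is flat as a left $(R,P_{\Omega})$-module. (Strictly speaking, Proposition~\ref{M} is phrased for \emph{left} modules being flat in the sense of $-\otimes_{(R,P_{\Omega})}-$ being exact; since everything here is symmetric between sides via opposite multiple Rota-Baxter algebras, the same ``summand of flat is flat'' statement applies on the right as well, which is the version I really need for the definition of flat.)

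Finally, I invoke Proposition~\ref{M} in its ``only if'' direction to conclude that each summand of this flat direct sum is itself flat. In particular $(M,\mathfrak{m}_{\Omega})$ is flat, which is what I wanted. I do not anticipate any genuine obstacle: the only mild care required is to make sure the side conventions (left vs.\ right) in Proposition~\ref{M} match the side used in the definition of flat module right above Theorem~\ref{P}; once that is noted, the three-line chain \emph{projective $\Rightarrow$ summand of free $\Rightarrow$ summand of flat $\Rightarrow$ flat} closes the argument.
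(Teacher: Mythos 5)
Your proposal is correct and is exactly the paper's argument: the paper proves Theorem~\ref{P} by citing Lemma~\ref{O} (projective is a summand of free), Theorem~\ref{N} (free is flat), and the ``only if'' direction of Proposition~\ref{M} (a summand of a flat direct sum is flat), which is precisely the chain you assemble. Your remark about reconciling the left/right conventions between the definition of flatness and Proposition~\ref{M} is a fair observation about a looseness already present in the paper, not a gap in your argument.
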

This result shows that the category of multiple Rota-Baxter modules contains sufficient flat objects, which enables the construction of tensor functors in this category.

\vspace{0.5cm}
{\bf Acknowledgments.} This research is supported by the National Natural Science Foundation of China (Grant No. 12301025 and 12101316) and the Belt and Road Innovative Talents Exchange Foreign Experts project (Grant No. DL2023014002L).

{\bf Declaration of interests.} The authors have no conflicts of interest to disclose.

{\bf Data availability.} No.

\end{document}